\documentclass[english]{article}
\usepackage[T1]{fontenc}
\usepackage[latin9]{inputenc}
\usepackage{geometry}
\geometry{verbose,lmargin=2.5cm,rmargin=2.5cm}
\synctex=-1
\usepackage{color}
\usepackage{babel}
\usepackage{mathrsfs}
\usepackage{algorithm2e}
\usepackage{amsmath}
\usepackage{amsthm}
\usepackage{amssymb}
\usepackage{stmaryrd}
\ifx\hypersetup\undefined
  \AtBeginDocument{%
    \hypersetup{unicode=true,pdfusetitle,
 bookmarks=true,bookmarksnumbered=false,bookmarksopen=false,
 breaklinks=false,pdfborder={0 0 1},backref=false,colorlinks=true}
  }
\else
  \hypersetup{unicode=true,pdfusetitle,
 bookmarks=true,bookmarksnumbered=false,bookmarksopen=false,
 breaklinks=false,pdfborder={0 0 1},backref=false,colorlinks=true}
\fi

\makeatletter
\newcommand{\lyxaddress}[1]{
	\par {\raggedright #1
	\vspace{1.4em}
	\noindent\par}
}
\theoremstyle{plain}
\newtheorem{thm}{\protect\theoremname}[section]
\theoremstyle{definition}
\newtheorem{defn}[thm]{\protect\definitionname}
\theoremstyle{remark}
\newtheorem{rem}[thm]{\protect\remarkname}
\theoremstyle{plain}
\newtheorem{lem}[thm]{\protect\lemmaname}
\theoremstyle{plain}
\newtheorem{prop}[thm]{\protect\propositionname}
\theoremstyle{plain}
\newtheorem{cor}[thm]{\protect\corollaryname}
\theoremstyle{definition}
\newtheorem{example}[thm]{\protect\examplename}
\newenvironment{lyxlist}[1]
	{\begin{list}{}
		{\settowidth{\labelwidth}{#1}
		 \setlength{\leftmargin}{\labelwidth}
		 \addtolength{\leftmargin}{\labelsep}
		 }}
	{\end{list}}
\theoremstyle{plain}
\newtheorem*{prop*}{\protect\propositionname}
\theoremstyle{plain}
\newtheorem*{lem*}{\protect\lemmaname}

\usepackage{stmaryrd}
\usepackage{algorithm2e}

\usepackage[colorlinks]{hyperref}

\usepackage{mathabx}
\usepackage{pdfcomment}
\usepackage{verbatim}

\newcounter{hypA}
\newenvironment{hyp}{\refstepcounter{hypA}\begin{itemize}
\item[({\bf A\arabic{hypA}})]}{\end{itemize}}


 \setlength{\marginparwidth}{2cm}
 


\makeatother

\providecommand{\corollaryname}{Corollary}
\providecommand{\definitionname}{Definition}
\providecommand{\examplename}{Example}
\providecommand{\lemmaname}{Lemma}
\providecommand{\propositionname}{Proposition}
\providecommand{\remarkname}{Remark}
\providecommand{\theoremname}{Theorem}

\begin{document}
\title{Peskun-Tierney ordering for Markov chain and process Monte Carlo:
beyond the reversible scenario}
\author{Christophe Andrieu \& Samuel Livingstone}
\maketitle

\lyxaddress{School of Mathematics, University of Bristol, UK.}

\lyxaddress{Department of Statistical Science, University College London, UK.}
\begin{abstract}
Historically time-reversibility of the transitions or processes underpinning
Markov chain Monte Carlo methods (MCMC) has played a key r\^ole in
their development, while the self-adjointness of associated operators
together with the use of classical functional analysis techniques
on Hilbert spaces have led to powerful and practically successful
tools to characterize and compare their performance. Similar results
for algorithms relying on nonreversible Markov processes are scarce.
We show that for a type of nonreversible Monte Carlo Markov chains
and processes, of current or renewed interest in the Physics and Statistical
literatures, it is possible to develop comparison results which closely
mirror those available in the reversible scenario. We show that these
results shed light on earlier literature, proving some conjectures
and strengthening some earlier results.
\end{abstract}

\section{Introduction}

Markov chain Monte Carlo (MCMC) is concerned with the simulation of
realisations of $\pi-$invariant and ergodic Markov chains, where
$\pi$ is a probability distribution of interest defined on some appropriate
measurable space $(\mathsf{X},\mathscr{X})$. Such realisations can
be used to produce samples of distributions arbitrarily close to $\pi$,
or approximate expectations with respect to $\pi$. For a given probability
distribution $\pi$ the choice of a Markov chain is not unique, and
understanding the nature of the approximation associated to particular
choices is therefore of importance and has generated a substantial
body of literature, both in statistical science and physics--among
others and directly related to our work \cite{peskun1973optimum,caracciolo1990nonlocal,liu1996peskun,tierney1998,greenwood1998information,mira2001ordering,neal2004improving,leisen2008extension,hobert2008,Latuszynski2013,maire2014comparison,roberts2014minimising,rosenthal2015spectral,doucet2015efficient,sakai2016eigenvalue,rey2016improving,andrieu2016establishing,Bornn2017,10.1093/biomet/asx031}.
The present paper is a contribution to this literature and addresses
a scenario currently barely covered by existing theory, despite recent
interest motivated by applications.

Due to its wide applicability the Metropolis-Hastings update \cite{metropolis1953equation,hastings1970monte}
is the cornerstone of the design of general purpose MCMC algorithms.
The corresponding Markov transition probability satisfies the so-called
detailed balance property, ensuring that $\pi$ is left invariant
by this update, but also implies reversibility of the numerous algorithms
of which it is a building block. An unintended benefit of reversibility
is given at a theoretical level. Using the operator interpretation
of Markov transition probabilities, the properties of reversible Markov
chains can be studied with well established functional analysis techniques
developed for self-adjoint operators. The celebrated result of Peskun
and its extensions \cite{peskun1973optimum,caracciolo1990nonlocal,tierney1998}
are an example (see \cite{mira2001ordering} for a review), and allow
for practical performance comparisons in numerous scenarios of interest
(see Theorem \ref{thm:carraciolo-tierney} and its Corollary for a
quick reference), providing in particular clear answers to questions
concerned with the design of algorithms. While reversibility facilitates
theoretical analysis and has historically enabled methodological developments,
it is not necessarily a desirable property when performance is considered.
Informally such processes have a tendency to ``backtrack'', slowing
down exploration of the support of the target distribution $\pi$.

Recently, there has been renewed interest in the design of $\pi-$invariant
Markov chains which are not reversible. In several specific scenarios
it has been shown that departing from reversibility can both improve
the speed of convergence of a Markov chain \cite{diaconis2000analysis},
and reduce the asymptotic variance of resulting estimators \cite{sakai2016eigenvalue}
(although counterexamples also exist \cite{roberts2015surprising}).
Certain nonreversible samplers have been known for some time \cite{horowitz1991generalized,gustafson1998guided},
but interest has been re-kindled more recently thanks to a suite of
methods which are not instances of the Metropolis--Hastings class.
All these Markov transition probabilities share a common structure,
illustrated here with a very simple example. Assume that $\mathsf{X=\mathbb{Z}}$,
let $E:=\mathsf{X}\times\{-1,1\}$, embed the distribution of interest
$\pi$ into $\mu(x,v):=\frac{1}{2}\pi(x)\mathbb{I}\big\{ v\in\{-1,1\}\big\}$
and consider the Markov transition probability
\begin{equation}
P(x,v;y,v):=\alpha(x,v)\mathbb{I}\{y=x+v\}\mathbb{I}\{w=v\}+\mathbb{I}\{y=x\}\mathbb{I}\{w=-v\}\big[1-\alpha(x,v)\big],\label{eq:toy-gustafson}
\end{equation}
where $\alpha(x,v):=\min\big\{1,\pi(x+v)/\pi(x)\big\}$ and $\mathbb{I}S$
is the indicator function of set $S$. In words, starting at $(x,v)$,
the first component of the Markov chain generated by $P$ will travel
in the same direction $v$ in increments of size one until a rejection
occurs and the direction is reversed. One can check that this does
not satisfy detailed balance with respect to $\mu$ (or indeed $\pi$),
but a similar looking property
\[
\mu(x,v)P(x,v;y,w)=\mu(y,w)P(y,-w;x,-v),
\]
for $(x,v),(y,w)\in E$. This is referred to as modified detailed
balance in the literature \cite{fang2014compressible} or skewed detailed
balance \cite{hukushima2013irreversible}, and leads to what is known
as Yaglom reversibility \cite{yaglom1949statistical}. It is instructive
to write this identity in terms of the transition probability $Q(x,v;y,w):=\mathbb{I}\{y=x\}\mathbb{I}\{w=-v\}$,
which now reads $\mu(x,v)P(x,v;y,w)=\pi(y,w)QPQ(y,w;x,v)$ where $QPQ$
is the composition of the three kernels. An interpretation of this
identity is that the corresponding operator $QPQ$ is the adjoint
of $P$, not $P$ as is the case in the self-adjoint scenario. This
structure of the adjoint of $P$, together with the fact that $Q^{2}$
is the identity, play a central r\^ole in our analysis and covers
a surprisingly large number of known scenarios and applications currently
beyond the reach of earlier theory. Indeed our theory does not require
the embedding $\mu$ of $\pi$ to be of the specific form above, and
$Q$ is only required to be an isometric involution--see Subsection
\ref{subsec:Q-self-adjointness} for a precise definition in the present
context. As we shall see this structure allows us to develop a theory
for performance comparison for this class of MCMC algorithms which
parallels that existing for reversible algorithms; see Subsection
\ref{subsec:DT-ordering-results}. Applications are given in Section
\ref{subsec:DT-Examples}, and include the proof of conjectures concerned
with the lifted Metropolis--Hastings method of \cite{turitsyn2011irreversible,vucelja2016lifting}
and improves and generalises the results of \cite{sakai2016eigenvalue},
provide a direct and rigorous proof of \cite{neal2004improving} in
a more general set-up and a connection to the results of \cite{maire2014comparison},
which is generalised, permitting the characterization of algorithms
(e.g. \cite{horowitz1991generalized,campos2015extra}) currently not
covered by existing theory. 

We show that this structure is shared by nonreversible Markov process
Monte Carlo (MPMC) methods, the continuous time pendant of MCMC, which
have recently attracted some attention \cite{peters2012rejection,2015bou-rabee-sanz-serna,bouchard2015bouncy,bierkens2015piecewise,bierkens2016zig,ottobre2016markov}.
Characterization of this property in the continuous time setup is
precisely formulated in Subsection \ref{subsec:continuous:Set-up-and-characterization}
and a concrete example discussed in Subsection \ref{subsec:PDMP-mu,Q-symmetry}.
In Subsection \ref{subsec:continuous-Ordering-of-asymptotic} we propose
new tools which enable performance comparison for this class of processes
and an application is presented in Section \ref{sec:Continuous-time-scenario-applications}.

Throughout this paper we will use the following standard notation.
Let $\big(E,\mathscr{E}\big)$ be a measurable space. For Markov kernels
$T_{1},T_{2}\colon E\times\mathscr{E}\rightarrow[0,1]$ we let $T_{1}T_{2}(z,A):=\int T_{1}(z,{\rm d}z')T_{2}(z',A)$
for all $A\in\mathscr{E}$ and for any probability measure $\nu$
on $\big(E,\mathscr{E}\big)$ and $f\in\mathbb{R}^{E}$ measurable,
we let $\nu(f):=\int f{\rm d}\nu$, sometimes simplified to $\nu f$
when no ambiguity is possible and whenever this quantity exists. We
denote $T$ the associated operators acting on functions to the right
as $Tf(z):=\int f(z')T(z,{\rm d}z')$ for $z\in E$, and on measures
to the left as $\nu T(A):=\int_{E}\int_{A}\nu({\rm d}z)T(z,{\rm d}z')$
for every $A\in\mathscr{E}$. Let $\mu$ be a probability distribution
defined on some measurable space $\big(E,\mathscr{E}\big)$. Whenever
the following exist, for $f,g:E\rightarrow\mathbb{R}$, we define
$\bigl\langle f,g\bigr\rangle_{\mu}:=\int fg{\rm d}\mu$, $\|f\|_{\mu}:=\smash{\big(\int f^{2}{\rm d}\mu\big)^{1/2}}$
and the Hilbert spaces
\[
L^{2}(\mu):=\bigl\{ f\in\mathbb{R}^{E}:\|f\|_{\mu}<\infty\bigr\},
\]
and $L_{0}^{2}(\mu):=L^{2}(\mu)\cap\{f\in\mathbb{R}^{E}:\mu(f)=0\}$.
We let $\vvvert T\vvvert_{\mu}:=\sup_{\|f\|_{\mu}=1}\|Tf\|_{\mu}$
and denote $T^{*}$ the adjoint of $T$, whenever it is well defined.
For a set $S$ we let $S^{c}$ be its complement in the ambient space.

\section{\label{sec:Discrete-time-section}Discrete time scenario -- general
results}

\subsection{The notion of $(\mu,Q)-$self-adjointness\label{subsec:Q-self-adjointness}}

Here we formalize the notion of $(\mu,Q)-$self-adjointness, and discuss
its consequences in the discrete time setting.
\begin{defn}
\label{def:isometric-involution}We call a linear operator $Q\colon L^{2}(\mu)\to L^{2}(\mu)$
an \emph{isometric involution} if
\end{defn}

\begin{enumerate}
\item $\langle f,g\rangle_{\mu}=\langle Qf,Qg\rangle_{\mu}$ for all $f,g\in L^{2}(\mu)$,
\item $Q^{2}={\rm Id}$, the identity operator.
\end{enumerate}
\begin{rem}
\label{rem:propertiesQ}We note the simple properties
\begin{itemize}
\item $Q$ is $\mu-$self-adjoint since $\langle f,Qg\rangle_{\mu}=\langle Qf,Q^{2}g\rangle_{\mu}=\langle Qf,g\rangle_{\mu}$
for $f,g\in L^{2}(\mu)$,
\item the operators $\Pi_{+}:=({\rm Id}+Q)/2$ and $\Pi_{-}:=({\rm Id}-Q)/2$
are $\mu-$self-adjoint projectors and for any $f\in L^{2}(\mu)$,
$f=\Pi_{+}f+\Pi_{-}f$, $Q\Pi_{+}f=\Pi_{+}f$ and $Q\Pi_{-}f=-\Pi_{-}f$.
\item for $\Pi$ an orthogonal projector, $Q=\pm\big({\rm Id}-2\Pi\big)$
is an isometric involution.
\end{itemize}
\end{rem}

Again we will use the same symbol for the associated Markov kernel
$Q\colon E\times\mathscr{E}\rightarrow[0,1]$ and that $\mu Q=\mu$.
The following establishes that there exists an involution $\xi\colon E\rightarrow E$
such that for all $f\in\mathbb{R}^{E}$ and $z\in E$, $Qf(z)=f\circ\xi(z)$.
\begin{lem}
Let $T\colon E\times\mathscr{E}\rightarrow[0,1]$ be a Markov transition
probability such that for any $z\in E$, $T^{2}(z,\{z\})=1$, then
there exists an involution $\tau\colon E\rightarrow E$ such that
for $z,z'\in E$, $T(z,{\rm d}z')=\delta_{\tau(z)}({\rm d}z')$.
\end{lem}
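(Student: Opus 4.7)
The plan is to first show that for each $z$, $T(z,\cdot)$ is a Dirac measure concentrated at a single point $\tau(z)$, and then verify that the induced map $\tau$ is a measurable involution.

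First I would unpack the hypothesis $T^2(z,\{z\})=1$ into the integral identity
\[
\int T(z,\mathrm{d}y)\,T(y,\{z\}) = 1.
\]
Since $T(y,\{z\})\le 1$ and $T(z,\cdot)$ is a probability measure, the integrand must equal $1$ for $T(z,\cdot)$-almost every $y$. Setting $A_z:=\{y\in E: T(y,\{z\})=1\}$ (which is in $\mathscr{E}$ because $y\mapsto T(y,\{z\})$ is measurable, assuming singletons lie in $\mathscr{E}$, which is implicit in the statement), this gives $T(z,A_z)=1$, and in particular $A_z\neq\emptyset$. Moreover, any $y\in A_z$ satisfies $T(y,\cdot)=\delta_z$.

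The key step — and I expect the only subtle one — is showing that $A_z$ contains exactly one element, so that $T(z,\cdot)$ is itself a Dirac measure. The trick is to apply the hypothesis at the point $y\in A_z$ rather than at $z$: since $T(y,\cdot)=\delta_z$,
\[
1 = T^2(y,\{y\}) = \int T(y,\mathrm{d}w)\,T(w,\{y\}) = T(z,\{y\}).
\]
Thus $T(z,\{y\})=1$ for every $y\in A_z$. If $y_1,y_2\in A_z$ were distinct, the probability measure $T(z,\cdot)$ would assign mass $1$ to each of the disjoint sets $\{y_1\},\{y_2\}$, contradicting $T(z,E)=1$. Hence $A_z$ is a singleton, and we may define $\tau(z)$ by $\{\tau(z)\}=A_z$, giving $T(z,\cdot)=\delta_{\tau(z)}$.

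It then remains to check that $\tau$ is an involution and is measurable. For the involution property, apply what we have just shown at $\tau(z)$: $T(\tau(z),\cdot)=\delta_{\tau(\tau(z))}$. On the other hand, because $\tau(z)\in A_z$, we also have $T(\tau(z),\cdot)=\delta_z$, so $\tau(\tau(z))=z$. For measurability, for any $B\in\mathscr{E}$ the preimage $\tau^{-1}(B)=\{z: T(z,B)=1\}$ is measurable since $z\mapsto T(z,B)$ is. This completes the proposal.
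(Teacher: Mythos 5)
Your proof is correct and follows essentially the same route as the paper's: define $A_z$, deduce $T(z,A_z)=1$ from the integral identity, and rule out two distinct points of $A_z$ by applying the hypothesis $T^2(y,\{y\})=1$ at those points to force $T(z,\{y\})=1$ for each. You additionally spell out the involution and measurability checks that the paper dismisses as immediate, which is fine.
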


\begin{proof}
Let $z\in E$ and $A_{z}:=\{z'\in E\colon T(z',\{z\})=1\}$. From
$T^{2}(z,\{z\})=\int T(z,{\rm d}z')T(z',\{z\})=1$ we deduce by contradiction
that $T(z,A_{z})=1$. Assume there exists $z_{1}',z_{2}'\in A_{z}$
such that $z_{1}'\neq z_{2}'$. Then for $i\in\{1,2\}$, $T^{2}(z_{i}',\{z_{i}'\})=\int T(z_{i}',{\rm d}z'')T(z'',\{z'_{i}\})=T(z,\{z'_{i}\})=1$,
which is not possible if $z_{1}'\neq z_{2}'$. Therefore $A_{z}$
is a singleton, whose element we denote $\tau(z)$ and $T(z,{\rm d}z')=\delta_{\tau(z)}({\rm d}z')$.
The involution property is immediate.
\end{proof}
\begin{defn}
We say a Markov operator $P\colon L^{2}(\mu)\to L^{2}(\mu)$ is $(\mu,Q)$-self-adjoint
if there is an isometric involution $Q$ such that for all $f,g\in L^{2}(\mu)$
\[
\langle Pf,g\rangle_{\mu}=\langle f,QPQg\rangle_{\mu}.
\]
We will say that the corresponding kernel $P\colon E\times\mathscr{E}\rightarrow[0,1]$
is $(\mu,Q)$-reversible. When $Q={\rm Id}$ we will simply say that
$P$ is $\mu-$self adjoint or $\mu-$reversible. The following is
a simple but important characterisation of $(\mu,Q)-$self-adjoint
operators. 
\end{defn}

\begin{prop}
\label{prop:QPsa}If the Markov operator $P$ is $(\mu,Q)-$self-adjoint
(resp. $\mu-$self-adjoint) then $QP$ and $PQ$ are $\mu-$self-adjoint
(resp. $(\mu,Q)-$self-adjoint). As a result a $(\mu,Q)-$self-adjoint
Markov operator is always the composition of two $\mu-$self-adjoint
Markov operators.
\end{prop}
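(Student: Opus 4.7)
The plan is to work directly from the definition of $(\mu,Q)$-self-adjointness together with the two properties of $Q$ collected in Remark \ref{rem:propertiesQ}: that $Q$ is $\mu$-self-adjoint and that $Q^{2}=\mathrm{Id}$. Each of the four implications reduces to a short chain of inner-product identities, so I expect no genuine obstacle, only some bookkeeping.

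For the forward direction, assume $P$ is $(\mu,Q)$-self-adjoint and take arbitrary $f,g\in L^{2}(\mu)$. Using first the $\mu$-self-adjointness of $Q$, then the defining identity with $Qg$ in place of $g$, and finally $Q^{2}=\mathrm{Id}$, I would write
$$
\langle QPf,g\rangle_{\mu}=\langle Pf,Qg\rangle_{\mu}=\langle f,QPQ(Qg)\rangle_{\mu}=\langle f,QPg\rangle_{\mu},
$$
which gives $\mu$-self-adjointness of $QP$. The case of $PQ$ is symmetric: $\langle PQf,g\rangle_{\mu}=\langle Qf,QPQg\rangle_{\mu}=\langle f,PQg\rangle_{\mu}$. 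For the converse, with $P$ now $\mu$-self-adjoint, the same one-liner read in the opposite direction does the job: $\langle QPf,g\rangle_{\mu}=\langle Pf,Qg\rangle_{\mu}=\langle f,PQg\rangle_{\mu}=\langle f,Q(QP)Qg\rangle_{\mu}$, showing $QP$ is $(\mu,Q)$-self-adjoint, and analogously for $PQ$.

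For the final assertion, the identity $Q^{2}=\mathrm{Id}$ lets me write $P=Q\cdot(QP)$; by the first part $QP$ is $\mu$-self-adjoint, and $Q$ itself is $\mu$-self-adjoint by Remark \ref{rem:propertiesQ}. The only non-formal point to check is that these are genuinely Markov operators and not merely bounded operators on $L^{2}(\mu)$: for $Q$ this is the content of the preceding lemma, which identifies $Q$ with the deterministic kernel $\delta_{\xi(z)}(\cdot)$ for an involution $\xi$; for $QP$ it follows because the composition of two Markov kernels is a Markov kernel. This is the only conceptual subtlety—separating the Hilbert-space role of $Q$ from its interpretation as a Markov kernel—and once it is noted, the decomposition of $P$ as a product of two $\mu$-self-adjoint Markov operators is immediate.
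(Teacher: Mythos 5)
Your proof is correct and follows essentially the same argument as the paper's: the same chains of inner-product identities using $\mu$-self-adjointness of $Q$ and $Q^{2}=\mathrm{Id}$, and the same factorization $P=Q(QP)$ for the final claim. The extra remark that $Q$ and $QP$ are genuinely Markov kernels (via the lemma identifying $Q$ with a deterministic involution) is a sensible, if minor, addition that the paper leaves implicit.
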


\begin{proof}
Let $f,g\in L^{2}(\mu)$. Assume that $P$ is $\mu-$self-adjoint,
then $\langle QPf,g\rangle_{\mu}=\langle Pf,Qg\rangle_{\mu}=\langle f,PQg\rangle_{\mu}=\langle f,Q(QP)Qg\rangle_{\mu}$.
Similar arguments establish that $PQ$ is $(\mu,Q)-$self-adjoint.
Now assume that $P$ is $(\mu,Q)-$self-adjoint then $\langle QPf,g\rangle_{\mu}=\langle Pf,Qg\rangle_{\mu}=\langle f,QPQQg\rangle_{\mu}=\langle f,QPg\rangle_{\mu}$.
Similar arguments establish that $PQ$ is $\mu-$self-adjoint. The
last point is straightforward since $P=Q(QP)=(PQ)Q$.
\end{proof}
\begin{defn}
For $P$ a $(\mu,Q)-$self-adjoint operator we call $QP$ and $PQ$
its \emph{reversible parts}.
\end{defn}

\subsection{Ordering of asymptotic variances\label{subsec:DT-ordering-results}}

For an homogeneous Markov chain $\{Z_{0},Z_{1},\ldots\}$ of transition
kernel $P$ leaving $\mu$ invariant, started at equilibrium and any
$\mu$-measurable $f:E\to\mathbb{R}$, we define the \emph{asymptotic
variance}
\begin{equation}
{\rm var}(f,P):=\lim_{n\to\infty}n\text{{\rm var}}\Bigl(n^{-1}{\textstyle \sum_{i=0}^{n-1}}f(Z_{i})\Bigr),\label{eq:asymptotic-var-homogeneous}
\end{equation}
whenever the limit exists. This limit always exists, but may be infinite,
when $P$ is $\mu-$reversible and $f\in L^{2}(\mu)$. Beyond this
scenario general criteria exist \cite[Theorem 4.1]{maigret1978theoreme,glynn1996}
and often require a bespoke analysis. A general question of interest
is, given two Markov transition probabilities $P_{1}$ and $P_{2}$
leaving $\mu$ invariant, can one find a simple criterion to establish
that for some function $f$, ${\rm var}(f,P_{1})\geq{\rm var}(f,P_{2})$
or ${\rm var}(f,P_{1})\leq{\rm var}(f,P_{2})$, when these quantities
are well defined. When $P_{1}$ and $P_{2}$ are $\mu-$reversible
a criterion based on Dirichlet forms leads to a particularly simple
solution. Beyond this scenario little is known in general.

For $P$ and $f\in L^{2}(\mu)$, define the Dirichlet form
\begin{equation}
\mathcal{E}(f,P):=\langle f,({\rm Id}-P)f\rangle_{\mu}=\frac{1}{2}\int[f(z')-f(z)]^{2}\mu({\rm d}z)P(z,{\rm d}z').\label{eq:dirichlet-form}
\end{equation}
Note that this is $\|f\|_{\mu}^{2}-\langle f,Pf\rangle_{\mu}$ where
the last term is the first order auto-covariance coefficient for $f\in L_{0}^{2}(\mu)$.
Let ${\rm Gap}_{R}\big(P\big):=\inf_{f\in L_{0}^{2}(\mu),\|f\|_{\mu}\neq0}\mathcal{E}(f,P)/\|f\|_{\mu}^{2}$.
\begin{thm}[Caracciolo et al. \cite{caracciolo1990nonlocal}, Tierney \cite{tierney1998}]
\label{thm:carraciolo-tierney}Let $\mu$ be a probability distribution
on some measurable space $\big(E,\mathscr{E}\big)$, and let $P_{1}$
and $P_{2}$ be two $\mu-$reversible Markov transition probabilities.
If for any $g\in L^{2}(\mu)$, $\mathcal{E}(g,P_{1})\geq\mathcal{E}(g,P_{2})$,
then for any $f\in L^{2}(\mu)$ 
\[
\mathrm{var}(f,P_{1})\leq\mathrm{var}(f,P_{2})\text{ and }{\rm Gap}_{R}\bigl(P_{1}\bigr)\geq{\rm Gap}_{R}\bigl(P_{2}\bigr).
\]
\end{thm}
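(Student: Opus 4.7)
The inequality for the spectral gap is immediate: ${\rm Gap}_R(P)$ is an infimum of $\mathcal{E}(f,P)/\|f\|_\mu^2$ over a set of test functions independent of $P$, so pointwise dominance of Dirichlet forms transfers to dominance of infima. The substance of the theorem therefore lies in the variance comparison, for which I would establish a variational representation of ${\rm var}(f,P)$ that involves $P$ only through $\mathcal{E}(\cdot,P)$. The target identity is, for every $f \in L_0^2(\mu)$,
\[
{\rm var}(f,P) + \|f\|_\mu^2 \;=\; \sup_{g \in L^2(\mu)}\bigl\{\,4\langle f,g\rangle_\mu - 2\mathcal{E}(g,P)\,\bigr\} \;\in\; [0,+\infty].
\]
Once this is in hand the conclusion is a one-line monotonicity statement: the hypothesis $\mathcal{E}(g,P_1) \geq \mathcal{E}(g,P_2)$ makes the functional inside the supremum pointwise smaller for $P_1$ than for $P_2$, so the suprema inherit the same order, which is the claimed variance comparison.

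To prove the variational identity I would invoke the spectral theorem for the self-adjoint contraction $P$ on $L_0^2(\mu)$, whose spectrum is contained in $[-1,1]$. Each $f \in L_0^2(\mu)$ produces a finite Borel spectral measure $E_f$ on $[-1,1]$ with $\|f\|_\mu^2 = \int dE_f$, $\langle f,P^k f\rangle_\mu = \int \lambda^k \, dE_f(\lambda)$ and $\mathcal{E}(g,P) = \int (1-\lambda)\,dE_g(\lambda)$. Expanding the Cesaro variance
\[
n\,{\rm var}\bigl(n^{-1}\textstyle\sum_{i=0}^{n-1} f(Z_i)\bigr) \;=\; \|f\|_\mu^2 + 2\sum_{k=1}^{n-1}\bigl(1-k/n\bigr)\langle f,P^k f\rangle_\mu
\]
as an integral against $E_f$ and letting $n \to \infty$ by monotone/dominated convergence yields the Kipnis--Varadhan spectral representation
\[
{\rm var}(f,P) \;=\; \int_{[-1,1]} \frac{1+\lambda}{1-\lambda}\, dE_f(\lambda) \;\in\; [0,+\infty],
\]
so that ${\rm var}(f,P)+\|f\|_\mu^2 = 2\int dE_f(\lambda)/(1-\lambda) = 2\langle f,({\rm Id}-P)^{-1}f\rangle_\mu$ in the spectral-calculus sense. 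The variational formula is then the spectral lift of the elementary pointwise identity $1/(1-\lambda) = \sup_{t \in \mathbb{R}}\{2t-(1-\lambda)t^2\}$ valid for $\lambda < 1$, equivalently the standard complete-the-square statement for the positive operator ${\rm Id}-P$.

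The main obstacle is the degenerate case where $E_f$ has mass accumulating at $\lambda = 1$, so that ${\rm var}(f,P) = +\infty$ and $({\rm Id}-P)^{-1}$ is not a bounded operator on $L_0^2(\mu)$. I would handle it by a regularisation: prove the identity first with $P$ replaced by $\kappa P$ for $\kappa \in (0,1)$, where the spectrum lies strictly inside $(-1,1)$ and all manipulations are classical; then let $\kappa \uparrow 1$, invoking monotone convergence of $1/(1-\kappa\lambda) \uparrow 1/(1-\lambda)$ on the left and of $\mathcal{E}(g,\kappa P) = (1-\kappa)\|g\|_\mu^2 + \kappa\mathcal{E}(g,P) \downarrow \mathcal{E}(g,P)$ on the right. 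A secondary, easy point is that extending the supremum from $L_0^2(\mu)$ to $L^2(\mu)$ is cost-free, because $f$ is centred and constants lie in the kernel of ${\rm Id}-P$, so the constant component of any candidate $g$ contributes zero to both terms.
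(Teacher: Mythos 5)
The paper does not actually prove this theorem: it is quoted as a classical result of Caracciolo--Pelissetto--Sokal and Tierney, so there is no in-paper proof to match yours against. Your argument is the standard variational/spectral route and is essentially correct: the spectral-gap claim is indeed immediate from the definition of ${\rm Gap}_R$ as an infimum of Dirichlet-form ratios, the Kipnis--Varadhan representation ${\rm var}(f,P)=\int\frac{1+\lambda}{1-\lambda}\,dE_f(\lambda)$ is the right object, and the variational identity ${\rm var}(f,P)+\|f\|_\mu^2=\sup_g\{4\langle f,g\rangle_\mu-2\mathcal{E}(g,P)\}$ (the complete-the-square principle for the nonnegative self-adjoint operator ${\rm Id}-P$, with the $+\infty$ convention handled by Cauchy--Schwarz for the upper bound and a regularised maximiser for the lower bound) reduces the variance ordering to the hypothesis in one line. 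Two small imprecisions in your limiting arguments: neither $n^{-1}\sum_{i,j}\lambda^{|i-j|}$ nor $1/(1-\kappa\lambda)$ is monotone in $n$ resp.\ $\kappa$ on all of $[-1,1]$, so you should split the spectral integral into $[0,1]$ (monotone convergence, where the possible divergence lives) and $[-1,0)$ (where the integrands are uniformly bounded and dominated convergence applies); likewise $\mathcal{E}(g,\kappa P)=\mathcal{E}(g,P)+(1-\kappa)\langle g,Pg\rangle_\mu$ converges to $\mathcal{E}(g,P)$ but not monotonically. These are routine fixes, not gaps. It is worth contrasting your method with the one the authors use for their own generalisation, Theorem \ref{thm:ordering_discrete}: there they follow Tierney's interpolation device, differentiating $\beta\mapsto{\rm var}_\lambda(f,P(\beta))$ for $P(\beta)=\beta P_1+(1-\beta)P_2$ via the resolvent identity. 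Your variational approach buys a statement about the true asymptotic variance, infinite values included, but leans entirely on the spectral theorem for self-adjoint operators; the interpolation approach sacrifices this (it only orders the regularised ${\rm var}_\lambda$ and defers the limit $\lambda\uparrow1$) precisely so that it survives in the $(\mu,Q)$-reversible setting, where no spectral calculus is available.
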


\begin{cor}[Peskun \cite{peskun1973optimum}]
\label{cor:peskun}Whenever for any $z\in E$ and $A\in\mathscr{E}$
such that $P_{1}(z,A\cap\{z\}^{c})\geq P_{2}(z,A\cap\{z\}^{c})$ then
for any $f\in L^{2}(\mu)$,
\[
\mathrm{var}(f,P_{1})\leq\mathrm{var}(f,P_{2})\text{ and }{\rm Gap}_{R}\bigl(P_{1}\bigr)\geq{\rm Gap}_{R}\bigl(P_{2}\bigr).
\]
\end{cor}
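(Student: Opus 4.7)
The plan is to reduce the corollary directly to Theorem \ref{thm:carraciolo-tierney} by verifying its Dirichlet-form hypothesis from the pointwise assumption on the kernels. The key observation is that in the second form of the Dirichlet form,
\[
\mathcal{E}(f,P)=\tfrac{1}{2}\int\int[f(z')-f(z)]^{2}\,\mu({\rm d}z)\,P(z,{\rm d}z'),
\]
the integrand $[f(z')-f(z)]^{2}$ vanishes identically on the diagonal $\{z'=z\}$. Hence, for any Markov kernel $P$, the inner integral over $E$ equals the inner integral over $\{z\}^{c}$, and only the \emph{off-diagonal} part of $P(z,\cdot)$ contributes.

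With this observation in hand, the hypothesis $P_{1}(z,A\cap\{z\}^{c})\geq P_{2}(z,A\cap\{z\}^{c})$ for every $z\in E$ and $A\in\mathscr{E}$ says precisely that, viewed as finite measures on $\{z\}^{c}$, the off-diagonal part of $P_{1}(z,\cdot)$ dominates that of $P_{2}(z,\cdot)$. Since $[f(z')-f(z)]^{2}\geq 0$, integrating this nonnegative function against a dominating measure yields a larger value, so for every $z$,
\[
\int[f(z')-f(z)]^{2}\,P_{1}(z,{\rm d}z')\geq\int[f(z')-f(z)]^{2}\,P_{2}(z,{\rm d}z').
\]
Integrating with respect to $\mu({\rm d}z)$ preserves the inequality and gives $\mathcal{E}(f,P_{1})\geq\mathcal{E}(f,P_{2})$ for every $f\in L^{2}(\mu)$.

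Since both $P_{1}$ and $P_{2}$ are $\mu$-reversible, Theorem \ref{thm:carraciolo-tierney} then immediately yields ${\rm var}(f,P_{1})\leq{\rm var}(f,P_{2})$ and ${\rm Gap}_{R}(P_{1})\geq{\rm Gap}_{R}(P_{2})$ for all $f\in L^{2}(\mu)$.

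There is no genuine obstacle here; the only conceptual subtlety is recognising that one should not try to compare $P_{1}$ and $P_{2}$ directly as measures on $E$ (on the diagonal the inequality goes the wrong way, since $P_{1}(z,\{z\})\leq P_{2}(z,\{z\})$ by total-mass considerations), but rather exploit that the Dirichlet form depends \emph{only} on the restriction to $\{z\}^{c}$, which is exactly where the hypothesis provides a direct comparison.
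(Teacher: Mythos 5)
Your proof is correct: the paper states Corollary \ref{cor:peskun} without proof, but your argument—using the off-diagonal representation of the Dirichlet form in (\ref{eq:dirichlet-form}) to convert the pointwise domination of the kernels off the diagonal into $\mathcal{E}(f,P_{1})\geq\mathcal{E}(f,P_{2})$ for all $f$, then invoking Theorem \ref{thm:carraciolo-tierney}—is exactly the standard reduction the authors intend, and the same device they use explicitly in later analogous results (e.g.\ the corollary to Theorem \ref{thm:ordering_discrete} and the lifted-MCMC comparison). Your closing remark about why one must compare only the restrictions to $\{z\}^{c}$ is also the right observation.
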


Our main result is that these conclusions extend in part to $(\mu,Q)-$reversible
transitions. In order to ensure the existence of the quantities we
consider, for any $\lambda\in[0,1)$ we introduce the $\lambda-$asymptotic
variance, defined for any $f\in L^{2}(\mu)$, with $\bar{f}:=f-\mu(f)$,
as
\[
{\rm var}_{\lambda}(f,P):=\|\bar{f}\|_{\mu}^{2}+2\sum_{k\geq1}\lambda^{k}\langle\bar{f},P^{k}\bar{f}\rangle_{\mu}=2\langle\bar{f},[{\rm Id}-\lambda P]^{-1}\bar{f}\rangle_{\mu}-\|\bar{f}\|_{\mu}^{2}.
\]
Whether $\lim_{\lambda\uparrow1}{\rm var}_{\lambda}(f,P)={\rm var}(f,P)$
when the latter exists is problem specific and not addressed here,
but we note that this is always true in the reversible scenario and
that a general sufficient condition is that $\sum_{k\geq1}\big|\langle\bar{f},P^{k}\bar{f}\rangle_{\mu}\big|<\infty$.
Rather we focus on ordering ${\rm var}_{\lambda}(P_{1},f)$ and ${\rm var}_{\lambda}(P_{2},f)$
for $\lambda\in[0,1)$ and leave the convergence to the asymptotic
variances as a separate problem. 
\begin{thm}
\label{thm:ordering_discrete}Let $\mu$ be a probability distribution
on some measurable space $\big(E,\mathscr{E}\big)$, and let $P_{1}$
and $P_{2}$ be two $(\mu,Q)-$reversible Markov transition probabilities.
Assume that for any $g\in L^{2}(\mu)$, $\mathcal{E}(g,QP_{1})\geq\mathcal{E}(g,QP_{2})$,
or for any $g\in L^{2}(\mu)$, $\mathcal{E}(g,P_{1}Q)\geq\mathcal{E}(g,P_{2}Q)$.
Then for any $\lambda\in[0,1)$ and $f\in L^{2}(\mu)$
\end{thm}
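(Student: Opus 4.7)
The expected conclusion, by analogy with Theorem \ref{thm:carraciolo-tierney}, is ${\rm var}_\lambda(f, P_1) \leq {\rm var}_\lambda(f, P_2)$ for every $\lambda \in [0,1)$ and $f \in L^2(\mu)$. My plan is to reduce this non-reversible comparison to a quadratic form controlled by the Dirichlet form of the reversible parts $QP_i$, which, by Proposition~\ref{prop:QPsa}, are bona fide $\mu$-self-adjoint Markov operators.

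First I would show the two displayed hypotheses are equivalent, so that either can be used interchangeably. Using $Q^2 = {\rm Id}$ and the $\mu$-self-adjointness of $Q$ (Remark~\ref{rem:propertiesQ}), a short computation gives $\langle g, PQ g\rangle_\mu = \langle Qg, QP g\rangle_\mu$, and hence $\mathcal{E}(g, PQ) = \mathcal{E}(Qg, QP)$; bijectivity of $g \mapsto Qg$ on $L^2(\mu)$ then forces the two Dirichlet-form orderings to coincide, and I assume henceforth $\mathcal{E}(g, QP_1) \geq \mathcal{E}(g, QP_2)$ for all $g$.

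Second, I would rewrite ${\rm var}_\lambda(f,P) + \|\bar f\|_\mu^2 = 2\langle \bar f, ({\rm Id} - \lambda P)^{-1} \bar f\rangle_\mu$ and seek a representation of the right-hand side whose monotonicity in $\mathcal{E}(\cdot, QP)$ can be read off. Since ${\rm Id} - \lambda P$ is not self-adjoint, I would Hermitise via the algebraic identity
$$({\rm Id}-\lambda P)^{-1} + ({\rm Id}-\lambda P^*)^{-1} = 2({\rm Id}-\lambda P)^{-1}\bigl[{\rm Id} - \tfrac{\lambda}{2}(P+P^*)\bigr]({\rm Id}-\lambda P^*)^{-1},$$
valid because both inverses exist for $\lambda \in [0,1)$. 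Exploiting $P^* = QPQ$ and the orthogonal projectors $\Pi_\pm$ of Remark~\ref{rem:propertiesQ}, and setting $u := ({\rm Id}-\lambda P^*)^{-1}\bar f$, the relation $P\Pi_\pm = \pm QP\,\Pi_\pm$ then recasts the resolvent quadratic form as a combination of $\langle \Pi_\pm u, QP\,\Pi_\pm u\rangle_\mu$, that is, of Dirichlet forms of the reversible part restricted to the $\pm 1$ eigenspaces of $Q$.

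The main obstacle is that the $+$ and $-$ pieces enter with opposite signs, so a pointwise use of the Dirichlet-form hypothesis does not immediately order $\langle \bar f, ({\rm Id}-\lambda P_i)^{-1}\bar f\rangle_\mu$. I would circumvent this by exploiting the block decomposition of $P$ on $L^2(\mu) = \Pi_+ L^2(\mu) \oplus \Pi_- L^2(\mu)$: the identity $P^* = QPQ$ forces the diagonal blocks to be $\mu$-self-adjoint and the off-diagonal blocks to be anti-adjoint of one another, and a Schur-complement reduction of the coupled system $({\rm Id}-\lambda P)u = \bar f$ produces an effective self-adjoint quadratic form on a single eigenspace whose coefficient depends monotonically on $\mathcal{E}(\cdot, QP)$ (this is where the equivalence of the two hypotheses, which couples information on both $QP$ and $PQ$, is essential, since the off-diagonal block enters the Schur correction). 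The hypothesis then orders the effective coefficients in the operator sense, yielding the desired ordering of the resolvent quadratic forms and hence of ${\rm var}_\lambda(f, P_i)$. Verifying this Schur-complement monotonicity---balancing diagonal and off-diagonal contributions that enter with opposite signs---is the principal technical difficulty, and the step that genuinely uses the full strength of $(\mu,Q)$-reversibility rather than mere $\mu$-invariance.
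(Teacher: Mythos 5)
There is a genuine gap, and it starts with the target statement itself. The conclusion you conjecture --- ${\rm var}_{\lambda}(f,P_{1})\leq{\rm var}_{\lambda}(f,P_{2})$ for \emph{every} $f\in L^{2}(\mu)$ --- is not what the theorem asserts and is false in general: the theorem concludes ${\rm var}_{\lambda}(f,P_{1})\leq{\rm var}_{\lambda}(f,P_{2})$ only for $f$ with $Qf=f$, and the \emph{reverse} inequality ${\rm var}_{\lambda}(f,P_{1})\geq{\rm var}_{\lambda}(f,P_{2})$ for $f$ with $Qf=-f$. If the one-sided ordering held for all $f$ it would force equality on the $-1$ eigenspace of $Q$, which fails in the concrete examples (e.g.\ functions odd in the velocity for lifted chains). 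Your own computation exposes exactly why: symmetrising the resolvent gives $\tfrac{1}{2}\langle u,(P+P^{*})u\rangle_{\mu}=\langle\Pi_{+}u,QP\,\Pi_{+}u\rangle_{\mu}-\langle\Pi_{-}u,QP\,\Pi_{-}u\rangle_{\mu}$, so the two eigenspaces contribute with opposite signs. That sign flip is not an obstacle to be engineered away by a Schur complement --- it is the content of the theorem. The proposed ``Schur-complement monotonicity'' is precisely the step you leave unverified, and it cannot be verified, because the statement it is meant to deliver is false.

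The parts you do carry out are sound and partially overlap with the paper: the identity $\mathcal{E}(g,PQ)=\mathcal{E}(Qg,QP)$ together with bijectivity of $Q$ does show the two hypotheses are equivalent (the paper uses exactly this observation at the end of its proof), and the Hermitisation identity for the resolvent is algebraically correct. But the paper's route is different and avoids the sign problem entirely: following Tierney, it interpolates $P(\beta)=\beta P_{1}+(1-\beta)P_{2}$, computes
\begin{equation*}
\partial_{\beta}{\rm var}_{\lambda}\big(f,P(\beta)\big)=\lambda\bigl\langle Q[{\rm Id}-\lambda P(\beta)]^{-1}Qf,\ (P_{1}-P_{2})[{\rm Id}-\lambda P(\beta)]^{-1}f\bigr\rangle_{\mu},
\end{equation*}
and uses $[{\rm Id}-\lambda P]^{-1*}=Q[{\rm Id}-\lambda P]^{-1}Q$ together with the hypothesis $Qf=\pm f$ so that, with $g=[{\rm Id}-\lambda P(\beta)]^{-1}f$, the derivative collapses to $\pm\lambda\langle Qg,(P_{1}-P_{2})g\rangle_{\mu}=\pm\lambda\big[\mathcal{E}(g,QP_{2})-\mathcal{E}(g,QP_{1})\big]$, a difference of the two Dirichlet forms evaluated at the \emph{same} function $g$, whose sign is given directly by the hypothesis. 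The eigenfunction condition $Qf=\pm f$ is what makes the left argument of the pairing equal to $\pm Qg$ rather than an unrelated function; without it no ordering is available. To repair your argument you would need to restrict to $Qf=\pm f$ from the outset and replace the direct resolvent comparison by the interpolation-and-differentiation step (or an equivalent device that places $P_{1}-P_{2}$ between $Qg$ and $g$ for a single $g$).
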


\begin{enumerate}
\item satisfying $Qf=f$ it holds that ${\rm var}_{\lambda}(f,P_{1})\leq{\rm var}_{\lambda}(f,P_{2})$,
\item satisfying $Qf=-f$ it holds that ${\rm var}_{\lambda}(f,P_{1})\geq{\rm var}_{\lambda}(f,P_{2}).$
\end{enumerate}
\begin{cor}
If $P_{1}$ and $P_{2}$ are such that for every $z\in E$ and every
$A\in\mathcal{\mathscr{E}}$ it holds that $P_{1}Q(z,A\cap\{z\}^{c})\geq P_{2}Q(z,A\cap\{z\}^{c})$,
or $QP_{1}(z,A\cap\{z\}^{c})\geq QP_{2}(z,A\cap\{z\}^{c})$, then
the conclusion of Theorem \ref{thm:ordering_discrete} holds.
\end{cor}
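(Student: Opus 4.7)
The plan is to reduce the corollary to Theorem \ref{thm:ordering_discrete} by showing that the off-diagonal ordering of the kernels $QP_i$ (or $P_iQ$) translates into the Dirichlet-form ordering $\mathcal{E}(g, QP_1) \geq \mathcal{E}(g, QP_2)$ (or the analogue with $P_iQ$). This is the classical Peskun observation that $\mathcal{E}$ is insensitive to the diagonal part of the kernel, now applied to the reversible parts of the $(\mu,Q)$-reversible operators $P_1,P_2$ rather than to the operators themselves.

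First I would invoke Proposition \ref{prop:QPsa}: since $P_1$ and $P_2$ are $(\mu,Q)$-reversible, each of $QP_1, QP_2, P_1Q, P_2Q$ is $\mu$-self-adjoint, and in particular $\mu$-invariant. This $\mu$-invariance is precisely what is needed for the Dirichlet form of each of these operators to admit the representation
\[
\mathcal{E}(g, T) = \tfrac{1}{2}\int [g(z') - g(z)]^2\, \mu(\mathrm{d}z)\, T(z, \mathrm{d}z'),
\]
in which the integrand vanishes on the diagonal $\{z'=z\}$.

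Next, assume the hypothesis $QP_1(z, A \cap \{z\}^c) \geq QP_2(z, A \cap \{z\}^c)$ for all $z \in E$ and $A \in \mathscr{E}$. Then for each fixed $z$ the signed measure $(QP_1 - QP_2)(z,\cdot)$ is nonnegative when restricted to $E \setminus \{z\}$. Setting $h(z,z') := [g(z')-g(z)]^2$, which is nonnegative and vanishes on the diagonal, one obtains
\[
\int h(z,z')\, QP_1(z, \mathrm{d}z') \geq \int h(z,z')\, QP_2(z, \mathrm{d}z'),
\]
and integrating against $\mu(\mathrm{d}z)$ yields $\mathcal{E}(g, QP_1) \geq \mathcal{E}(g, QP_2)$ for every $g \in L^2(\mu)$. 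The alternative hypothesis on $P_1Q$ versus $P_2Q$ is handled by the identical argument applied to the $\mu$-reversible kernels $P_iQ$.

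Having verified the Dirichlet form ordering required by Theorem \ref{thm:ordering_discrete}, the stated conclusion on ${\rm var}_{\lambda}(f,P_i)$ follows directly. I do not expect a real obstacle: the whole argument is a bookkeeping combination of the elementary Peskun-type off-diagonal comparison with the $\mu$-reversibility of the reversible parts guaranteed by Proposition \ref{prop:QPsa}; the only point deserving care is to make the signed-measure comparison on $E \setminus \{z\}$ rigorous (for instance by noting that a signed measure that is nonnegative on every subset of $E \setminus \{z\}$ is a bona fide measure there, against which a nonnegative function integrates nonnegatively).
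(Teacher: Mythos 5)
Your argument is correct and is exactly the intended one: the paper leaves this corollary unproved precisely because it is the standard Peskun-type observation that the off-diagonal domination, combined with the integral representation (\ref{eq:dirichlet-form}) of the Dirichlet form (valid here because Proposition \ref{prop:QPsa} makes $QP_i$ and $P_iQ$ $\mu$-self-adjoint, hence $\mu$-invariant), yields $\mathcal{E}(g,QP_1)\geq\mathcal{E}(g,QP_2)$ (resp.\ $\mathcal{E}(g,P_1Q)\geq\mathcal{E}(g,P_2Q)$) for all $g\in L^2(\mu)$, after which Theorem \ref{thm:ordering_discrete} applies. No gaps; your care about the signed measure being nonnegative on $E\setminus\{z\}$ and the integrand vanishing on the diagonal is exactly the right bookkeeping.
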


\begin{proof}[Proof of Theorem \ref{thm:ordering_discrete}]
 To compare $P_{1}$ and $P_{2}$ we follow the approach of \cite{tierney1998},
by introducing the mixture kernel $P(\beta)=\beta P_{1}+(1-\beta)P_{2}$
for $\beta\in[0,1]$ and establishing that the right derivative $\partial_{\beta}{\rm var}_{\lambda}\big(f,P(\beta)\big)$
is of constant sign for $\beta\in[0,1)$. Using the representation
${\rm var}_{\lambda}(f,P(\beta))=2\langle f,\big[{\rm Id}-\lambda P(\beta)\big]^{-1}f\rangle_{\mu}-\|f\|_{\mu}^{2}$
we have 
\begin{gather*}
\partial_{\beta}{\rm var}_{\lambda}\big(f,P(\beta)\big)=2\lambda\bigl\langle f,\big[{\rm Id}-\lambda P(\beta)\big]^{-1}\big(P_{1}-P_{2}\big)\big[{\rm Id}-\lambda P(\beta)\big]^{-1}f\bigr\rangle_{\mu},
\end{gather*}
which is justified in detail in \cite[Lemma 51]{andrieu2015convergence}.
Let $P$ be $(\mu,Q)-$self-adjoint, then for $f,g\in L^{2}(\mu)$,
\[
\bigl\langle f,\left[{\rm Id}-\lambda P\right]^{-1}g\bigr\rangle_{\mu}=\sum_{k=0}^{\infty}\lambda^{k}\langle(QPQ)^{k}f,g\rangle_{\mu}=\sum_{k=0}^{\infty}\lambda^{k}\langle QP^{k}Qf,g\rangle_{\mu}=\bigl\langle Q\left[{\rm Id}-\lambda P\right]^{-1}Qf,g\bigr\rangle_{\mu}.
\]
Noting that $P(\beta)$ is $(\mu,Q)-$self-adjoint, the above leads
to
\begin{equation}
\partial_{\beta}{\rm var}_{\lambda}\big(f,P(\beta)\big)=\lambda\bigl\langle Q[{\rm Id}-\lambda P(\beta)]^{-1}Qf,(P_{1}-P_{2})[{\rm Id}-\lambda P(\beta)]^{-1}f\bigr\rangle_{\mu}.\label{eq:diff-var-lambda}
\end{equation}
Set
\begin{equation}
g:=\big[{\rm Id}-\lambda P(\beta)\big]^{-1}f,\label{eq:beta-sol-Poisson-discrete}
\end{equation}
then from the assumptions $Qf=\pm f$ and on the Dirichlet forms,
we deduce that
\begin{align*}
\partial_{\beta}{\rm var}_{\lambda}\big(f,P(\beta)\big) & =\pm\lambda\bigl\langle Qg,\big(P_{1}-P_{2}\big)g\bigr\rangle{}_{\mu},\\
 & =\pm\lambda\left\langle g,\big({\rm Id}-QP_{2}\big)g\right\rangle _{\mu}\mp\lambda\bigl\langle g,\big({\rm Id}-QP_{1}\big)g\bigr\rangle_{\mu}\lesseqqgtr0.
\end{align*}
Noting that $\bigl\langle Qg,\big(P_{1}-P_{2}\big)g\bigr\rangle{}_{\mu}=\bigl\langle Qg,\big(P_{1}-P_{2}\big)QQg\bigr\rangle{}_{\mu}$
and that $Q\colon L^{2}(\mu)\rightarrow L^{2}(\mu)$ is a bijection
we conclude.
\end{proof}
\begin{rem}
We note that in contrast with the reversible scenario the result never
provides us with information about the speed of convergence to equilibrium.
The practical guideline resulting from the theorem is that after ``burn-in''
an algorithm should be tuned to maximize or minimize $\mathcal{E}(g,QP)$
or $\mathcal{E}(g,PQ)$ for all $g\in L^{2}(\mu)$.
\end{rem}

\begin{rem}
\label{rem:more-general-discrete-result-Poisson}From the proof it
can be seen that ordering ${\rm var}_{\lambda}(f,P_{1})$ and ${\rm var}_{\lambda}(f,P_{2})$
for a specific $f\in L^{2}(\mu)$ such that $Qf=f$, only requires
ordering Dirichlet forms for a particular subset of $L^{2}(\mu)$,
namely the $\lambda-$solutions of the Poisson equation (\ref{eq:beta-sol-Poisson-discrete}).
Although these quantities are generally intractable, in some scenarios
their structure may be exploited to order the Dirichlet forms involved.
Such ideas have been extensively used in the reversible scenario,
for example in \cite{andrieu2015convergence,andrieu2016establishing}
and we provide an example in the $(\mu,Q)-$self-adjoint setup in
Subsection \ref{subsec:equivalence}. Another consequence of this
is that strict inequalities can be obtained when the Dirichlet forms
are strictly ordered for nonconstant functions and these $\lambda-$solutions
are nonconstant. 
\end{rem}

\begin{rem}
More quantitative versions of this result, in the spirit of \cite{caracciolo1990nonlocal}
can also be replicated using ideas of \cite{Latuszynski2013}. For
$\alpha\in(0,1]$ consider the $(\mu,Q)-$reversible transition $P_{1}':=(1-\alpha){\rm Id}+\alpha P_{1}$,
notice that for $f\in L^{2}(\mu)$, $\langle\bar{f},[{\rm Id}-\lambda P_{1}']^{-1}\bar{f}\rangle_{\mu}=\alpha^{-1}\langle\bar{f},[{\rm Id}-\lambda P_{1}]^{-1}\bar{f}\rangle_{\mu}$
and for $g\in L^{2}(\mu)$
\begin{align*}
\langle g,[QP_{2}-QP_{1}']g\rangle_{\mu} & =\alpha\langle g,({\rm Id}-QP_{1})g\rangle_{\mu}-\langle g,({\rm Id}-QP_{2})g\rangle_{\mu}+(1-\alpha)\langle g,({\rm Id}-Q)g\rangle_{\mu}.
\end{align*}
Therefore if for all $g\in L^{2}(\mu)$, $\langle g,({\rm Id}-QP_{1})g\rangle_{\mu}\geq\alpha^{-1}\langle g,({\rm Id}-QP_{2})g\rangle_{\mu}$
then if in addition $f=Qf$,
\[
\alpha^{-1}\langle\bar{f},[{\rm Id}-\lambda P_{1}]^{-1}\bar{f}\rangle_{\mu}=\langle\bar{f},[{\rm Id}-\lambda P_{1}']^{-1}\bar{f}\rangle_{\mu}\leq\langle\bar{f},[{\rm Id}-\lambda P_{2}]^{-1}\bar{f}\rangle_{\mu},
\]
that is ${\rm var}_{\lambda}(f,P_{1})\leq(1-\alpha)\|\bar{f}\|_{\mu}^{2}+\alpha{\rm var}_{\lambda}(f,P_{2})$.
\end{rem}

\section{\label{subsec:DT-Examples}Discrete time scenario: examples}

The notion of $(\mu,Q)-$reversibility, often described in terms of
modified or skewed detailed balance, is known to hold for numerous
processes of interest but its implications, beyond establishing that
the corresponding Markov chain leaves $\mu$ invariant, are to the
best of our knowledge unknown. In this section we show that our framework
contributes to filling this gap and revisit a wide range of simple,
some foundational, questions. In some scenarios $(\mu,Q)-$reversibility
is not immediately apparent for a specific problem and we present
basic strategies to remedy this. More complex examples are possible,
such as extension of \cite{andrieu2015convergence,andrieu2016establishing}
or \cite{andrieu2016random}, for example, but beyond the scope of
this paper.

\subsection{\label{subsec:equivalence}Links to $2-$cycle based MCMC kernels}

Recently \cite{maire2014comparison}, have shown that results for
ordering of asymptotic variances of reversible time homogeneous Markov
chains can be extended to certain inhomogeneous Markov chains arising
naturally in the context of MCMC algorithms. Such chains are obtained
by cycling between two reversible MCMC kernels, and it is a natural
question to ask whether improving either of the kernels in terms of
individual Dirichlet forms improves performance of the inhomogeneous
chain resulting from their combination. Theorem \ref{thm:mdo-main-result}
below provides us with a simple and practical characterization. We
show that this result is in some sense dual to $(\mu,Q)-$reversibility
and provide a generalization which makes previously intractable analysis
of some algorithms possible. For $\pi$ a probability distribution
on some space $(\mathsf{X},\mathscr{X})$, $P_{1}$ and $P_{2}$ two
$\pi-$invariant Markov transitions and $f\in L^{2}(\pi)$, we extend
the definition of $\lambda-$asymptotic variance, for $\lambda\in[0,1)$
to the inhomogeneous scenario 
\[
{\rm var}_{\lambda}(f,\{P_{1},P_{2}\}):=\sum_{k\geq0}\lambda^{2k}\langle\bar{f},(P_{1}P_{2})^{k}({\rm Id}+\lambda P_{1})\bar{f}\rangle_{\pi}+\lambda^{2k}\langle\bar{f},(P_{2}P_{1})^{k}({\rm Id}+\lambda P_{2})\bar{f}\rangle_{\pi}-\|\bar{f}\|_{\pi}^{2},
\]
where $\bar{f}:=f-\pi(f)$, which is well defined since for any $g\in L^{2}(\pi)$
$\|P_{1}g\|_{\pi}\leq\|g\|_{\pi}$ and $\|P_{2}g\|_{\pi}\leq\|g\|_{\pi}$.
Under additional assumptions (see for example \cite[Proposition 9]{maire2014comparison})
the following limits exist and satisfy
\[
\lim_{\lambda\uparrow1}{\rm var}_{\lambda}(f,\{P_{1},P_{2}\})=\lim_{n\to\infty}n\text{{\rm var}}\Bigl(n^{-1}{\textstyle \sum_{i=0}^{n-1}}f(X_{i})\Bigr),
\]
where here $\{X_{0},X_{1},\ldots\}$ is the time inhomogeneous Markov
chain obtained by cycling through $P_{1}$ and $P_{2}$ and of initial
distribution $\pi$, that is for $A\in\mathscr{X},$ $\mathbb{P}\big(X_{k}\in A\mid X_{0},\ldots,X_{k-1}\big)=P_{2-(k\mod2)}(X_{k-1},A)$
for $k\geq1$ and $X_{0}\sim\pi$. The following is a reformulation
of \cite[Theorem 4 and Lemma 25]{maire2014comparison} combined with
a generalization of \cite[Lemma 18]{maire2014comparison}.
\begin{thm}[{see \cite[Theorem 4 and Lemma 25]{maire2014comparison}}]
\label{thm:mdo-main-result}Let $\pi$ be a probability distribution
defined on $(\mathsf{X},\mathscr{X})$. For $i,j\in\{1,2\}$, let
$P_{i,j}\colon\mathsf{X}\times\mathscr{X}\rightarrow[0,1]$ be $\pi-$reversible
Markov kernels such that for all $g\in L^{2}(\pi)$ and $i\in\{1,2\}$
we have $\mathcal{E}\big(g,P_{1,i}\big)\geq\mathcal{E}\big(g,P_{2,i}\big)$.
Then for any $f\in L^{2}(\pi)$ and $\lambda\in[0,1)$ 
\[
{\rm var}_{\lambda}(f,\{P_{1,1},P_{1,2}\})\leq{\rm var}_{\lambda}(f,\{P_{2,1},P_{2,2}\}).
\]
Further, if $f\in L^{2}(\pi)$ is such that $P_{i,1}f=f$ (or $P_{i,2}f=f$)
for $i\in\{1,2\}$, then
\[
{\rm var}_{\lambda}(f,P_{1,1}P_{1,2})\leq{\rm var}_{\lambda}(f,P_{2,1}P_{2,2}).
\]
\end{thm}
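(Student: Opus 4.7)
My plan is to encode the inhomogeneous $2$-cycle as a single $(\mu,Q)$-reversible Markov kernel on an augmented space, reduce the first claim to Theorem~\ref{thm:ordering_discrete}, and derive the second claim from the first by an algebraic identity. Take $E := \mathsf{X} \times \{1,2\}$, $\mu := \pi \otimes \mathrm{Unif}\{1,2\}$, and let $Q$ be the isometric involution on $L^2(\mu)$ induced by the flip $(x,i) \mapsto (x,3-i)$. For $j \in \{1,2\}$ introduce the ``apply then flip'' lift $\mathbf{P}^{(j)}((x,i);({\rm d}y,{\rm d}j')) := P_{j,i}(x,{\rm d}y)\,\delta_{3-i}({\rm d}j')$. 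Routine checks using $\pi$-reversibility of each $P_{j,i}$ give $\mu\mathbf{P}^{(j)} = \mu$ and $(\mu,Q)$-reversibility of $\mathbf{P}^{(j)}$.

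For the first claim, decompose $g \in L^2(\mu)$ as $g(x,i) = g_i(x)$; then a direct calculation yields $\mathbf{P}^{(j)}Q g(x,i) = (P_{j,i}g_i)(x)$, and consequently
\[
\mathcal{E}(g, \mathbf{P}^{(j)}Q) = \tfrac{1}{2}\mathcal{E}(g_1, P_{j,1}) + \tfrac{1}{2}\mathcal{E}(g_2, P_{j,2}).
\]
The hypothesis immediately furnishes $\mathcal{E}(g, \mathbf{P}^{(1)}Q) \geq \mathcal{E}(g, \mathbf{P}^{(2)}Q)$ for every $g \in L^2(\mu)$. Lift $f \in L^2(\pi)$ trivially to $\tilde f(x,i) := f(x)$; this satisfies $Q\tilde f = \tilde f$, so Theorem~\ref{thm:ordering_discrete} gives ${\rm var}_\lambda(\tilde f, \mathbf{P}^{(1)}) \leq {\rm var}_\lambda(\tilde f, \mathbf{P}^{(2)})$. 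Expanding $\langle \tilde f, (\mathbf{P}^{(j)})^k \tilde f\rangle_\mu$ by parity of $k$ and using the symmetry $\langle \bar f,(P_{j,1}P_{j,2})^k\bar f\rangle_\pi = \langle\bar f,(P_{j,2} P_{j,1})^k\bar f\rangle_\pi$ (a consequence of self-adjointness of each $P_{j,i}$), one verifies that ${\rm var}_\lambda(\tilde f, \mathbf{P}^{(j)}) = {\rm var}_\lambda(f, \{P_{j,1}, P_{j,2}\})$, which delivers the first conclusion.

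For the second claim, assume $P_{i,1}f = f$ for $i\in\{1,2\}$, so $P_{i,1}\bar f = \bar f$. I would establish the algebraic identity, valid for any $\pi$-reversible kernels $P_1,P_2$ and any $f\in L^2(\pi)$ with $P_1 f = f$,
\[
{\rm var}_\lambda(f,\{P_1,P_2\}) \;=\; \frac{(1+\lambda)^2}{2\lambda}\,{\rm var}_{\lambda^2}(f, P_1 P_2) \;-\; \frac{1-\lambda^2}{2\lambda}\|\bar f\|_\pi^2.
\]
The key tool is the resolvent intertwining $P_1({\rm Id}-\lambda^2 P_2 P_1)^{-1} = ({\rm Id}-\lambda^2 P_1 P_2)^{-1}P_1$ which, together with $P_1\bar f = \bar f$ and self-adjointness of $P_1,P_2$, collapses both series in the definition of ${\rm var}_\lambda(f,\{P_1,P_2\})$ into scalar multiples of $\langle \bar f, ({\rm Id}-\lambda^2 P_1 P_2)^{-1}\bar f\rangle_\pi$ and $\|\bar f\|_\pi^2$. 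Since the $\|\bar f\|_\pi^2$ summand is kernel-independent and the multiplier of ${\rm var}_{\lambda^2}(f, P_1 P_2)$ is strictly positive, the ordering established in the first half at parameter $\lambda$ transfers to ${\rm var}_{\lambda^2}(f, P_{1,1}P_{1,2}) \leq {\rm var}_{\lambda^2}(f, P_{2,1}P_{2,2})$; as $\lambda^2$ ranges over $[0,1)$ this proves the claim. The main technical obstacle is the algebraic bookkeeping needed for this identity, and the case $P_{i,2}f = f$ is handled by swapping the roles of the two kernels.
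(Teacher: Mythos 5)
Your proposal is correct and follows essentially the same route as the paper's proof: lift the $2$-cycle to a time-homogeneous $(\mu,Q)$-reversible kernel on an augmented space, identify the Dirichlet form of the lifted kernel composed with $Q$ as the average of the component Dirichlet forms so that Theorem \ref{thm:ordering_discrete} applies, and deduce the second claim from the identity ${\rm var}_\lambda(f,\{P_1,P_2\})=\tfrac{(1+\lambda)^2}{2\lambda}{\rm var}_{\lambda^2}(f,P_1P_2)-\tfrac{1-\lambda^2}{2\lambda}\|\bar f\|_\pi^2$, which is exactly the paper's identity in a different algebraic form. The only difference is cosmetic: you augment by the two-point set $\{1,2\}$ with $Q$ the flip, whereas the paper uses $\{1,2\}\times\{-1,1\}$ with an extra direction variable $v_2$ that is redundant for $m=2$ (it is there to explain why the argument fails for $m$-cycles with $m\geq3$).
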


\begin{cor}
Let $Q$ be an isometric involution and $P_{1}$ and $P_{2}$ be $\pi-$reversible.
Note that for $i\in\{1,2\}$, $P_{i}=Q(QP_{i})$ (resp. $P_{i}=(P_{i}Q)Q$)
and that both $P_{i,1}:=Q$ (resp. $P_{i,1}:=P_{i}Q$) and $P_{i,2}:=QP_{i}$
(resp. $P_{i,2}:=Q$) are $\pi-$self-adjoint by Proposition \ref{prop:QPsa}.
We can therefore apply Theorem \ref{thm:mdo-main-result} and the
conclusion of Theorem \ref{thm:ordering_discrete} holds for $f\in L^{2}(\pi)$
such that $Qf=\pm f$. 
\end{cor}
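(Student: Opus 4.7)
The plan is to view the factorisation $P_{i} = Q \cdot (Q P_{i})$ (or dually $P_{i} = (P_{i} Q) \cdot Q$) as a two-cycle of $\pi$-reversible kernels to which Theorem~\ref{thm:mdo-main-result} can be applied. First I would verify the self-adjointness of the factors: $Q$ is $\pi$-self-adjoint by Remark~\ref{rem:propertiesQ}, and since $P_{i}$ is $(\pi, Q)$-self-adjoint, Proposition~\ref{prop:QPsa} delivers that $Q P_{i}$ and $P_{i} Q$ are $\pi$-self-adjoint. Setting $P_{i,1} := Q$ and $P_{i,2} := Q P_{i}$ then produces a valid pair of $\pi$-reversible factors with $P_{i,1} P_{i,2} = P_{i}$, and the Dirichlet-form hypothesis of Theorem~\ref{thm:mdo-main-result} splits cleanly into the trivial equality $\mathcal{E}(g, P_{1,1}) = \mathcal{E}(g, P_{2,1}) = \mathcal{E}(g, Q)$ together with the assumed inequality $\mathcal{E}(g, Q P_{1}) \geq \mathcal{E}(g, Q P_{2})$.

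In the case $Q f = f$, the ``further'' part of Theorem~\ref{thm:mdo-main-result} is immediate: the eigenfunction hypothesis $P_{i,1} f = Q f = f$ is satisfied, so the conclusion ${\rm var}_{\lambda}(f, P_{1,1} P_{1,2}) \leq {\rm var}_{\lambda}(f, P_{2,1} P_{2,2})$ is precisely ${\rm var}_{\lambda}(f, P_{1}) \leq {\rm var}_{\lambda}(f, P_{2})$, matching Theorem~\ref{thm:ordering_discrete}.

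The main obstacle is the antisymmetric case $Q f = -f$, in which the eigenfunction hypothesis $P_{i,1} f = f$ of the ``further'' part breaks down. Here my plan is to fall back on the unconditional first part of Theorem~\ref{thm:mdo-main-result}, which delivers ${\rm var}_{\lambda}(f, \{Q, Q P_{1}\}) \leq {\rm var}_{\lambda}(f, \{Q, Q P_{2}\})$ for every $f \in L^{2}(\pi)$ and every $\lambda \in [0, 1)$, and then to unpack the cycled variance explicitly. Using $Q \bar{f} = -\bar{f}$, $Q^{2} = {\rm Id}$ and the telescoping identity $(Q P_{i} Q)^{k} = Q P_{i}^{k} Q$, the quantity ${\rm var}_{\lambda}(f, \{Q, Q P_{i}\})$ should collapse to an affine function of the autocorrelation series $\sum_{k \geq 1} \lambda^{2k} \langle \bar{f}, P_{i}^{k} \bar{f}\rangle_{\pi}$ with a negative slope, opposite in sign to the analogous expansion when $Q f = f$. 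The cycled-variance inequality therefore reverses into an ordering of this series valid for all $\lambda \in [0, 1)$; since $y = \lambda^{2}$ sweeps $[0, 1)$ as $\lambda$ does, the resulting ordering of $\sum_{k \geq 1} y^{k} \langle \bar{f}, P_{i}^{k} \bar{f}\rangle_{\pi}$ on $[0, 1)$ can be re-read with $y = \lambda$ to yield ${\rm var}_{\lambda}(f, P_{1}) \geq {\rm var}_{\lambda}(f, P_{2})$, the reversed ordering asserted by Theorem~\ref{thm:ordering_discrete}.
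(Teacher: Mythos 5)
Your argument is correct and, for the symmetric case $Qf=f$, coincides exactly with the paper's own (one-line) justification: decompose $P_i=Q(QP_i)$ (or $(P_iQ)Q$), invoke Proposition \ref{prop:QPsa} for $\pi$-self-adjointness of the factors, note that the Dirichlet-form hypotheses of Theorem \ref{thm:mdo-main-result} reduce to a trivial equality for the common factor $Q$ plus the assumed ordering of $\mathcal{E}(\cdot,QP_i)$, and apply the ``further'' part with $P_{i,1}f=Qf=f$. Where you add genuine value is the antisymmetric case $Qf=-f$, which the paper asserts but does not justify (the eigenfunction hypothesis of the ``further'' part fails there, as you correctly observe). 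Your fallback on the first part of Theorem \ref{thm:mdo-main-result} works: writing $a_k^{(i)}:=\langle\bar f,P_i^k\bar f\rangle_\pi$ and using $Q\bar f=-\bar f$, $Q^2={\rm Id}$ and $(QP_iQ)^k=QP_i^kQ$, the cycled variance collapses to
\[
{\rm var}_{\lambda}(f,\{Q,QP_i\})=(1-\lambda)\,a_0^{(i)}-(\lambda+\lambda^{-1}-2)\sum_{k\geq1}\lambda^{2k}a_k^{(i)},
\]
and since $\lambda+\lambda^{-1}-2>0$ on $(0,1)$ the inequality from Theorem \ref{thm:mdo-main-result} reverses into $\sum_{k\geq1}\lambda^{2k}a_k^{(1)}\geq\sum_{k\geq1}\lambda^{2k}a_k^{(2)}$, i.e.\ ${\rm var}_{\lambda^2}(f,P_1)\geq{\rm var}_{\lambda^2}(f,P_2)$, and the substitution $y=\lambda^2$ (a bijection of $[0,1)$ onto itself) yields conclusion (b) of Theorem \ref{thm:ordering_discrete} for all $\lambda\in[0,1)$. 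The only cosmetic caveat is that the corollary's hypothesis should be read as $P_1,P_2$ being $(\pi,Q)$-reversible (as you do), since that is what Proposition \ref{prop:QPsa} needs to make $QP_i$ and $P_iQ$ $\pi$-self-adjoint.
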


Conversely one can show using a very simple argument that the first
statement of Theorem \ref{thm:mdo-main-result} is a direct consequence
of $(\mu,Q)-$reversibility of a particular time homogeneous chain,
where time is now part of the state, for a particular isometric involution.
Apart from linking two seemingly unrelated ideas, an interest of the
proof is that it highlights the difficulty with extending the results
to $m-$cycles with $m\geq3$.
\begin{proof}[Proof of Theorem \ref{thm:mdo-main-result}]
Here we let $E=\mathsf{X}\times\mathsf{V}$ with $\mathsf{V}:=\{1,2\}\times\{-1,1\}$,
let $v=(v_{1},v_{2})\in\mathsf{V}$ and consider the target distribution
$\mu({\rm d}(x,v))=\pi({\rm d}x)/4\mathbb{I}\{v\in\mathsf{V}\}$.
For $i\in\{1,2\}$ we define the Markov transition probabilities
\[
P_{i}(x,v;{\rm d}(y,w))=P_{i,v_{1}}(x,{\rm d}y)\mathbb{I}\{w_{1}=v_{1}\oplus v_{2},w_{2}=v_{2}\},
\]
where $1\oplus(\pm1)=2$ and $2\oplus(\pm1)=1$, and $Q(x,v;{\rm d}(y,w))=\delta_{x}({\rm d}y)\mathbb{I}\{w_{1}=v_{1}\oplus v_{2},w_{2}=-v_{2}\}$
whose corresponding operator is $\mu-$isometric and involutive. Notice
that for $f\in L^{2}(\mu)$ $i\in\{1,2\}$ and $(x,v)\in E$, 
\[
QP_{i}f(x,v)=P_{i}f\big(x,(v_{1}\oplus v_{2},-v_{2})\big)=P_{i,v_{1}\oplus v_{2}}f_{(v_{1},-v_{2})}(x),
\]
with $x\mapsto f_{v}(x):=f(x,v)$ and therefore
\begin{align}
\left\langle QP_{i}f,g\right\rangle _{\mu} & =\frac{1}{4}\sum_{v\in\mathsf{V}}\left\langle P_{i,v_{1}\oplus v_{2}}f_{(v_{1},-v_{2})},g_{(v_{1},v_{2})}\right\rangle _{\pi}=\frac{1}{4}\sum_{v\in\mathsf{V}}\left\langle f_{(v_{1},-v_{2})},P_{i,v_{1}\oplus v_{2}}g_{(v_{1},v_{2})}\right\rangle _{\pi}\label{eq:MDO_proof_mu_Q}\\
 & =\frac{1}{4}\sum_{v\in\mathsf{V}}\left\langle f_{(v_{1},v_{2})},P_{i,v_{1}\oplus v_{2}}g_{(v_{1},-v_{2})}\right\rangle _{\pi}=\left\langle f,QP_{i}g\right\rangle _{\mu},\nonumber 
\end{align}
where we have used that for $v\in\mathsf{V}$, $P_{i,v_{1}\oplus v_{2}}$
is $\pi-$self-adjoint and the property that $v_{1}\oplus(-v_{2})=v_{1}\oplus v_{2}$.
From Proposition \ref{prop:QPsa} $P_{i}$ is $(\mu,Q)-$self-adjoint
and we now follow the proof of Theorem \ref{thm:ordering_discrete}
and its notation. For $\beta\in[0,1)$, from (\ref{eq:diff-var-lambda})
and (\ref{eq:MDO_proof_mu_Q}) we deduce that for any $f\in L^{2}(\pi)$,
with $(x,v)\mapsto\breve{f}(x,v):=f(x)$ (which satisfies $Q\breve{f}=\breve{f}$),
\[
\partial_{\beta}{\rm var}_{\lambda}\big(\breve{f},P(\beta)\big)=\frac{1}{4}\sum_{v\in\mathsf{V}}\left\langle \big(P_{1,v_{1}\oplus v_{2}}-P_{2,v_{1}\oplus v_{2}}\big)g_{(v_{1},-v_{2})}(\beta),g_{(v_{1},v_{2})}(\beta)\right\rangle _{\pi},
\]
where
\[
g(\beta)(x,v)=[{\rm Id}-\lambda P(\beta)]^{-1}\breve{f}(x,v)=\sum_{k\geq0}\lambda^{2k}(P_{v_{1}}P_{v_{1}\oplus v_{2}})^{k}f(x_{1})+\lambda^{2k+1}(P_{v_{1}}P_{v_{1}\oplus v_{2}})^{k}P_{v_{1}}f(x).
\]
By noting that $g_{(v_{1},-v_{2})}(\beta)=g_{(v_{1},v_{2})}(\beta)$
since $v_{1}\oplus(-v_{2})=v_{1}\oplus v_{2}$ we deduce $\partial_{\beta}{\rm var}_{\lambda}\big(\breve{f},P(\beta)\big)\geq0$
and ${\rm var}_{\lambda}\big(\breve{f},P_{1}\big)\leq{\rm var}_{\lambda}\big(\breve{f},P_{2}\big)$.
The first claim follows from the fact that ${\rm var}_{\lambda}(\breve{f},P_{i})={\rm var}_{\lambda}(f,\{P_{i,1},P_{i,2}\})$
for $i\in\{1,2\}$. The second statement is immediate once we establish
that for $f\in L^{2}(\pi)$ such that $P_{i,1}f=f$ for $i\in\{1,2\}$,
then
\[
{\rm var}_{\lambda}(f,\{P_{i,1},P_{i,2}\})=\frac{2+\lambda+\lambda^{-1}}{2}{\rm var}_{\lambda^{2}}(f,P_{i,1}P_{i,2})+\frac{\lambda-\lambda^{-1}}{2}\|\bar{f}\|_{2}^{2}.
\]
Notice that
\begin{align*}
\sum_{k\geq0}\lambda^{2k}\langle\bar{f},(P_{i,2}P_{i,1})^{k}({\rm Id}+\lambda P_{i,2})\bar{f}\rangle_{\pi} & =\|\bar{f}\|_{\pi}^{2}+(1+\lambda^{-1})\sum_{k\geq1}\lambda^{2k}\langle\bar{f},(P_{i,2}P_{i,1})^{k}\bar{f}\rangle_{\pi},\\
\sum_{k\geq0}\lambda^{2k}\langle\bar{f},(P_{i,1}P_{i,2})^{k}({\rm Id}+\lambda P_{i,1})\bar{f}\rangle_{\pi} & =(1+\lambda)\sum_{k\geq0}\lambda^{2k}\langle\bar{f},(P_{i,1}P_{i,2})^{k}\bar{f}\rangle_{\pi},
\end{align*}
and from the definition of ${\rm var}_{\lambda}(f,\{P_{i,1},P_{i,2}\})$
and the fact that for $i\in\{1,2\}$ and $k\geq1$ $\langle\bar{f},(P_{i,2}P_{i,1})^{k}\bar{f}\rangle_{\pi}=\langle\bar{f},(P_{i,1}P_{i,2})^{k}\bar{f}\rangle_{\pi}$
we conclude. When $P_{i,2}f=f$ for $i\in\{1,2\}$ the result follows
from the case above and the symmetry ${\rm var}_{\lambda}(f,\{P_{i,2},P_{i,1}\})={\rm var}_{\lambda}(f,\{P_{i,1},P_{i,2}\})$.
\end{proof}
\begin{rem}
Note that the instrumental Markov chains introduced in the proof are
never ergodic, but can be marginally. It is possible to revisit this
proof for $m-$cycles and $m\geq3$, but the property $v_{1}\oplus(-v_{2})=v_{1}\oplus v_{2}$
fails in this scenario, in general, and it is not possible to conclude.
\end{rem}

Theorem \ref{thm:mdo-extension-main-result} below extends Theorem
\ref{thm:mdo-main-result} to 2-cycles of $(\mu,Q)-$reversible Markov
kernels--applications of this result are given in Subsection \ref{subsec:timereversible}.
\begin{thm}
\label{thm:mdo-extension-main-result}Let $\pi$ be a probability
distribution defined on some probability space $(\mathsf{X},\mathscr{X})$.
For $i,j\in\{1,2\}$, let $P_{i,j}\colon\mathsf{X}\times\mathscr{X}\rightarrow[0,1]$
be $(\mu,Q)-$reversible Markov kernels for some isometric involution
$Q$, and such that for all $i\in\{1,2\}$ we have $\mathcal{E}\big(g,QP_{1,i}\big)\geq\mathcal{E}\big(g,QP_{2,i}\big)$
for all $g\in L^{2}(\pi)$, or $\mathcal{E}\big(g,P_{1,i}Q\big)\geq\mathcal{E}\big(g,P_{2,i}Q\big)$
for all $g\in L^{2}(\pi)$. Then for any $f\in L^{2}(\pi)$ such that
$Qf=f$ and $\lambda\in[0,1)$
\[
{\rm var}_{\lambda}(f,\{P_{1,1},P_{1,2}\})\leq{\rm var}_{\lambda}(f,\{P_{2,1},P_{2,2}\}).
\]
Further, if $f\in L^{2}(\pi)$ is such that $P_{i,1}f=f$ (or $P_{i,2}f=f$)
for $i\in\{1,2\}$, then
\[
{\rm var}_{\lambda}(f,P_{1,1}P_{1,2})\leq{\rm var}_{\lambda}(f,P_{2,1}P_{2,2}).
\]
\end{thm}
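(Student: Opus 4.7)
The plan is to mimic the proof of Theorem \ref{thm:mdo-main-result} by lifting the inhomogeneous 2-cycle $\{P_{i,1}, P_{i,2}\}$ to a time-homogeneous $(\mu', Q')$-reversible kernel on an extended state space, and then carrying through the derivative-in-$\beta$ argument of Theorem \ref{thm:ordering_discrete}. Specifically, I set $E = \mathsf{X} \times \mathsf{V}$ with $\mathsf{V} = \{1,2\} \times \{-1, 1\}$, $\mu'({\rm d}(x,v)) = \pi({\rm d}x)/4$ for $v \in \mathsf{V}$, and define the lifted kernels
\[
\tilde{P}_i(x, v; {\rm d}(y, w)) := P_{i, v_1}(x, {\rm d}y)\,\mathbb{I}\{w_1 = v_1 \oplus v_2,\, w_2 = v_2\}
\]
together with the extended involution
\[
Q'(x, v; {\rm d}(y, w)) := Q(x, {\rm d}y)\,\mathbb{I}\{w_1 = v_1 \oplus v_2,\, w_2 = -v_2\}.
\]
The only change from the construction in the proof of Theorem \ref{thm:mdo-main-result} is that $Q'$ now acts nontrivially on the $\mathsf{X}$-coordinate through the original isometric involution $Q$.

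I first check that $Q'$ is an isometric involution on $L^2(\mu')$ (using $Q^2 = {\rm Id}$ and that $v \mapsto (v_1 \oplus v_2, -v_2)$ is an involution of $\mathsf{V}$) and that $\tilde{P}_i$ is $(\mu', Q')$-self-adjoint. For the latter, writing $f_v(x) := f(x, v)$, a direct calculation yields
\[
Q'\tilde{P}_i f(x, v) = Q\bigl[P_{i,\, v_1 \oplus v_2}\, f_{(v_1, -v_2)}\bigr](x),
\]
and the $\pi$-self-adjointness of each $QP_{i,j}$ (Proposition \ref{prop:QPsa} applied to the $(\pi,Q)$-reversible $P_{i,j}$), together with the 2-cycle identity $v_1 \oplus (-v_2) = v_1 \oplus v_2$, lets me reproduce the symmetry manipulation of (\ref{eq:MDO_proof_mu_Q}) to conclude that $Q'\tilde{P}_i$ is $\mu'$-self-adjoint. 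Lifting $f \in L^2(\pi)$ to $\breve{f}(x, v) := f(x)$, the hypothesis $Qf = f$ then yields $Q'\breve{f} = \breve{f}$; this is where the extra assumption on $f$ enters (in the proof of Theorem \ref{thm:mdo-main-result}, $Q'$ was the identity on $\mathsf{X}$ and this was automatic).

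I then replay the derivative argument of Theorem \ref{thm:ordering_discrete} with $\tilde{P}(\beta) := \beta\tilde{P}_1 + (1-\beta)\tilde{P}_2$ and $g(\beta) := [{\rm Id} - \lambda\tilde{P}(\beta)]^{-1}\breve{f}$. The crucial structural observation is that the defining equation for $g(\beta)$ decouples into a 2-by-2 system indexed by $v_1$, with $v_2$ entering only as a passive parameter, so $g_v(\beta)$ is independent of $v_2$. Writing $h^{(v_1)} := g_v(\beta)$ and collapsing the sum over $v_2 \in \{-1, 1\}$ in $\lambda\langle Q'g(\beta), (\tilde{P}_1 - \tilde{P}_2) g(\beta)\rangle_{\mu'}$, then moving one $Q$ across using its $\pi$-self-adjointness, reduces the $\beta$-derivative to
\[
\partial_\beta {\rm var}_\lambda(\breve{f}, \tilde{P}(\beta)) = \tfrac{\lambda}{2}\sum_{j \in \{1,2\}} \bigl[\mathcal{E}(h^{(3-j)}, QP_{2,j}) - \mathcal{E}(h^{(3-j)}, QP_{1,j})\bigr],
\]
which is nonpositive by the first set of hypotheses. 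The identity ${\rm var}_\lambda(\breve{f}, \tilde{P}_i) = {\rm var}_\lambda(f, \{P_{i,1}, P_{i,2}\})$ is verified exactly as in the proof of Theorem \ref{thm:mdo-main-result}. The alternative hypothesis $\mathcal{E}(g, P_{1,i}Q) \geq \mathcal{E}(g, P_{2,i}Q)$ is handled by the same computation rewritten via $Q'^2 = {\rm Id}$ so as to replace $Q'\tilde{P}_i$ by $\tilde{P}_i Q'$, whose Dirichlet form on $v_2$-independent functions factors as $\tfrac12\sum_{v_1}\mathcal{E}(\cdot, P_{i,v_1}Q)$. The second statement then follows by transporting verbatim the algebraic identity between ${\rm var}_\lambda(f, \{P_{i,1}, P_{i,2}\})$ and ${\rm var}_{\lambda^2}(f, P_{i,1}P_{i,2})$ derived at the end of the proof of Theorem \ref{thm:mdo-main-result}, which uses only $\pi$-invariance and the eigenvector hypothesis.

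The main obstacle is the verification of $(\mu', Q')$-self-adjointness of $\tilde{P}_i$, where the restriction to 2-cycles enters essentially through the identity $v_1 \oplus (-v_2) = v_1 \oplus v_2$ (which fails for $m$-cycles with $m \geq 3$, as flagged in the remark following Theorem \ref{thm:mdo-main-result}). The secondary technical point is the $v_2$-independence of the lifted Poisson solution $g_v(\beta)$; this is what allows the pair of terms in the derivative to collapse into a recognizable Dirichlet form difference, and it is where the assumption $Qf = f$ is actually used, through $Q'\breve{f} = \breve{f}$.
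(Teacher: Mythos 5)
Your proof is correct, but it takes a genuinely different route from the paper's. The paper proves this theorem in a few lines by \emph{reduction} to Theorem \ref{thm:mdo-main-result}: for $f=Qf$ one has the identity ${\rm var}_{\lambda}(f,\{P_{i,1},P_{i,2}\})={\rm var}_{\lambda}(f,\{P_{i,1}Q,QP_{i,2}\})$ (inserting $Q^{2}={\rm Id}$ inside each word $(P_{i,1}P_{i,2})^{k}$ and using $Q\bar{f}=\bar{f}$ and the isometry), the factors $P_{i,1}Q$ and $QP_{i,2}$ are $\pi$-self-adjoint by Proposition \ref{prop:QPsa}, and the Dirichlet-form hypotheses are exactly those of Theorem \ref{thm:mdo-main-result} for these reversible parts; both conclusions, including the second statement via $(P_{i,1}Q)f=f$, then drop out at once. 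You instead re-run the entire lifting construction with the twisted extended involution $Q'$ acting through $Q$ on the $\mathsf{X}$-coordinate and replay the $\partial_{\beta}$ argument; every step you describe checks out (the $(\mu',Q')$-self-adjointness via $v_{1}\oplus(-v_{2})=v_{1}\oplus v_{2}$, the $v_{2}$-independence of $g_{v}(\beta)$, the collapse to $\tfrac{\lambda}{2}\sum_{j}[\mathcal{E}(h^{(3-j)},QP_{2,j})-\mathcal{E}(h^{(3-j)},QP_{1,j})]$). What the paper's route buys is brevity and the automatic inheritance of the second statement; what yours buys is a self-contained argument that makes visible exactly where the 2-cycle restriction and the condition $Q'\breve{f}=\breve{f}$ enter, essentially re-proving Theorem \ref{thm:mdo-main-result} in the $(\mu,Q)$-setting rather than quoting it. One small caveat: the final algebraic identity relating ${\rm var}_{\lambda}(f,\{P_{i,1},P_{i,2}\})$ to ${\rm var}_{\lambda^{2}}(f,P_{i,1}P_{i,2})$ does not transport quite ``verbatim'' on $\pi$-invariance and $P_{i,1}f=f$ alone: the step $\langle\bar{f},(P_{i,2}P_{i,1})^{k}\bar{f}\rangle_{\pi}=\langle\bar{f},(P_{i,1}P_{i,2})^{k}\bar{f}\rangle_{\pi}$ uses adjointness, which in the $(\mu,Q)$-reversible setting gives $(P_{i,2}P_{i,1})^{*}=QP_{i,1}P_{i,2}Q$ and hence additionally requires $Q\bar{f}=\bar{f}$ (or, equivalently, passing to the reversible parts as the paper does); since the theorem's ``Further'' clause is read cumulatively with $Qf=f$, this is available, but it should be said.
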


\begin{proof}
Notice that for $f=Qf\in L^{2}(\pi)$ we have ${\rm var}_{\lambda}(f,\{P_{i,1},P_{i,2}\})={\rm var}_{\lambda}(f,\{P_{i,1}Q,QP_{i,2}\})$
since
\begin{align*}
\sum_{k\geq0}\lambda^{2k}\langle\bar{f},(P_{i,1}P_{i,2})^{k}({\rm Id}+\lambda P_{i,1})\bar{f}\rangle_{\pi} & =\sum_{k\geq0}\lambda^{2k}\langle\bar{f},(P_{i,1}QQP_{i,2})^{k}({\rm Id}+\lambda P_{i,1}Q)\bar{f}\rangle_{\pi},\\
\sum_{k\geq0}\lambda^{2k}\langle\bar{f},(P_{i,2}P_{i,1})^{k}({\rm Id}+\lambda P_{i,2})\bar{f}\rangle_{\pi} & =\sum_{k\geq0}\lambda^{2k}\langle\bar{f},(QP_{i,2}P_{i,1}Q)^{k}({\rm Id}+\lambda QP_{i,2})\bar{f}\rangle_{\pi},
\end{align*}
as for $k\geq0$, $(QP_{i,2}P_{i,1}Q)^{k}=Q(P_{i,2}P_{i,1})^{k}Q$,
$Q$ is an isometry, $Q^{2}=\mathrm{Id}$ and $Q\bar{f}=\bar{f}$.
From Proposition \ref{prop:QPsa} $P_{i,1}Q$ and $QP_{i,2}$ are
$\mu-$self-adjoint and we conclude with Theorem \ref{thm:mdo-main-result}.
Similarly one establishes that for $f=Qf\in L^{2}(\pi)$ we have ${\rm var}_{\lambda}(f,\{P_{i,1},P_{i,2}\})={\rm var}_{\lambda}(f,\{QP_{i,1},P_{i,2}Q\})$
and conclude in a similar way.
\end{proof}

\subsection{\label{subsec:timereversible}Construction of Markov kernels from
time-reversible flows}

A generic way to construct $(\mu,Q)-$reversible Markov transition
probability consists of the following slight generalization of \cite{horowitz1991generalized,fang2014compressible}.
For a probability distribution $m$ on $\big(E,\mathscr{E}\big)$
and measurable mapping $\psi\colon E\rightarrow E$ we let for any
$A\in\mathscr{E}$, $m^{\psi}(A):=m(\psi^{-1}(A))$. The presentation
parallels that of \cite[Section 2, second example.]{tierney1998}
in order to avoid specificities concerned with densities and, for
example, the presence of Jacobians.
\begin{prop}
\label{prop:metropolized-flows}Let $\mu$ be a probability distribution
on $\big(E,\mathscr{E}\big)$,
\begin{enumerate}
\item \label{enu:metropolized-flow-psi}$\psi\colon E\rightarrow E$ be
a bijection such that $\psi^{-1}=\xi\circ\psi\circ\xi$ for $\xi\colon E\rightarrow E$
corresponding to an isometric involution $Q$,
\item \label{enu:metropolized-flow-phi}$\phi\colon\mathbb{R}_{+}\rightarrow[0,1]$
such that $r\phi(r^{-1})=\phi(r)$ for $r>0$ and $\phi(0)=0$,
\item define for $z\in E$,
\[
r(z):=\begin{cases}
\frac{{\rm d}\mu^{\xi\circ\psi}/{\rm d}\nu(z)}{{\rm d}\mu/{\rm d}\nu(z)} & \text{if }{\rm d}\mu^{\xi\circ\psi}/{\rm d}\nu(z)>0\text{ and }{\rm d}\mu/{\rm d}\nu(z)>0,\\
0 & \text{otherwise}.
\end{cases}
\]
where $\nu:=\mu+\mu^{\xi\circ\psi}$
\end{enumerate}
then
\begin{equation}
P(z,{\rm d}z'):=\phi\circ r(z)\delta_{\psi(z)}({\rm d}z')+\delta_{\xi(z)}({\rm d}z')\big[1-\phi\circ r(z)\big],\label{eq:metropolize-flow}
\end{equation}
 is $(\mu,Q)-$reversible.

\end{prop}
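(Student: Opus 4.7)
The plan is to verify that $\langle Pf,g\rangle_\mu = \langle f,QPQg\rangle_\mu$ for every $f,g\in L^{2}(\mu)$. Using the pointwise expressions $Pf(z)=\phi(r(z))f(\psi(z))+[1-\phi(r(z))]f(\xi(z))$ and, since $Q^{2}=\mathrm{Id}$, $PQg(z)=\phi(r(z))g(\xi(\psi(z)))+[1-\phi(r(z))]g(z)$, together with the $\mu$-self-adjointness of $Q$ (Remark \ref{rem:propertiesQ}), the contributions carrying the factor $1-\phi\circ r$ cancel identically. The proof therefore reduces to the ``flow'' identity
\[
\int\mu({\rm d}z)\,\phi(r(z))\,f(\psi(z))\,g(z) \;=\; \int\mu({\rm d}z)\,\phi(r(z))\,f(\xi(z))\,g(\xi(\psi(z))).
\]

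The first ingredient I would extract is the purely algebraic fact that $T:=\xi\circ\psi$ is an involution: composing $\psi^{-1}=\xi\psi\xi$ on the left with $\psi$ gives $\psi\xi\psi=\xi$, hence $T\circ T=\mathrm{Id}$ and in particular $\psi\circ T=\xi$. The second ingredient is a symmetry of $r$ under $T$. Since $T$ is an involution, $\nu=\mu+\mu^{T}$ is $T$-invariant, so writing $p:={\rm d}\mu/{\rm d}\nu$ and $q:={\rm d}\mu^{T}/{\rm d}\nu$, a routine change of variables yields $q=p\circ T$ (and therefore $p=q\circ T$) $\nu$-a.e. Consequently $r=q/p$ satisfies $r\circ T=1/r$ on the set $\{p>0,\,q>0\}$, and the functional equation $r\,\phi(1/r)=\phi(r)$ combined with $\phi(0)=0$ and the piecewise definition of $r$ yields the pointwise equality $p(z)\phi(r(z))=q(z)\phi(r(T(z)))$, $\nu$-a.e. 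This is the substitute here for the usual Metropolis acceptance symmetry.

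With these in hand I would conclude as follows. Rewrite the left-hand side of the flow identity as $\int\nu({\rm d}z)\,p(z)\phi(r(z))\,f(\psi(z))\,g(z)$, swap $p\phi(r)$ for $q\,(\phi\circ r)\circ T$ via the pointwise identity, then substitute $z=T(w)$ using $T$-invariance of $\nu$. Since $q(T(w))\,\nu({\rm d}w)=p(w)\,\nu({\rm d}w)=\mu({\rm d}w)$, the integral becomes $\int\mu({\rm d}w)\,\phi(r(w))\,f(\psi(T(w)))\,g(T(w))$, and the identities $\psi\circ T=\xi$ and $T=\xi\circ\psi$ collapse the integrand to $\phi(r(w))\,f(\xi(w))\,g(\xi(\psi(w)))$, which is exactly the right-hand side. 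The only delicate point, and the one I expect to need explicit care, is the handling of $r$ and $\phi\circ r$ on the set where $p$ or $q$ vanishes; the conventions $\phi(0)=0$ and ``$r=0$ otherwise'' in the statement of the proposition are precisely what is needed to make each intermediate substitution $\nu$-a.e.\ valid.
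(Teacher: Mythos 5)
Your proof is correct and follows essentially the same route as the paper's: both arguments reduce the claim to the $\mu$-reversibility of the Metropolised involution $T=\xi\circ\psi$ (equivalently, the symmetry of the accepted-move part of $PQ$, since verifying $\langle Pf,g\rangle_{\mu}=\langle f,QPQg\rangle_{\mu}$ for all $f,g$ is the same as verifying that $PQ$ is $\mu$-self-adjoint). The only difference is that the paper delegates that final verification to the cited second example of Tierney (1998), whereas you prove it from scratch via the $T$-invariance of $\nu$, the identity $q=p\circ T$ and the acceptance-ratio symmetry $p\,\phi(r)=q\,(\phi\circ r)\circ T$; your treatment of the sets where $p$ or $q$ vanishes is also correct.
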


\begin{proof}
This can be checked directly, but we instead check that $PQ$ is $\mu-$self-adjoint
and conclude with Proposition \ref{prop:QPsa}. For any measurable
and bounded $f\in\mathbb{R}^{E}$,
\begin{align*}
PQf(z) & =\phi\circ r(z)\,(Qf)\circ\psi(z)+\big[1-\phi\circ r(z)\big](Qf)\circ\xi(z)\\
 & =\phi\circ r(z)\,f\circ\xi\circ\psi(z)+\big[1-\phi\circ r(z)\big]f(z).
\end{align*}
The property on $\psi$ implies that $\xi\circ\psi\circ\xi\circ\psi={\rm Id}$,
that is $\xi\circ\psi$ is an involution and hence $PQ$ is $\mu-$reversible
from \cite[Section 2, second example.]{tierney1998}. From Proposition
\ref{prop:QPsa} $P$ is $(\mu,Q)-$self-adjoint.
\end{proof}
\begin{example}
\label{exa:time-reversal-hamilton}Assume $E=\mathsf{X}\times\mathsf{V}$
and for $(x,v)\in E$ and $f\in\mathbb{R}^{E}$ let $Qf(x,v):=f(x,-v)$.
Then for any $t\in\mathbb{R}$, $\psi_{t}(x,v)=(x+tv,v)$ satisfies
$\psi_{t}^{-1}=\xi\circ\psi_{t}\circ\xi$ and was considered in \cite{gustafson1998guided}
to define the Guided Random Walk Metropolis. More general examples
satisfying this condition include $\psi_{t}(x,v)=\psi_{t/2}^{B}\circ\psi_{t}^{A}\circ\psi_{t/2}^{B}(x,v)$
where $\psi_{t}^{A}(x,v):=(x+t\nabla_{v}H(x,v),v)$ and $\psi_{t}^{B}(x,v):=(x,v-t\nabla_{x}H(x,v))$
for a separable Hamiltonian $H\colon E\rightarrow\mathbb{R}$. This
is the Störmer--Verlet scheme considered in \cite{horowitz1991generalized}
to define the Hybrid Monte Carlo algorithm in the situation where
$H:=-\log{\rm d}\mu/{\rm d}\lambda^{{\rm Leb}}$ is well defined and
separable. More generally dynamical systems with the time reversal
symmetry (e.g. \cite{lamb1998time} and also \cite[Lemma 3.14]{faggionato2009non})
provide ways of constructing such mappings (see also \cite{poncet2017generalized,ottobre2016function,sohl2014hamiltonian,campos2015extra,ottobre2016markov}
and \cite{fang2014compressible}).
\end{example}

\begin{example}
Choices of $\phi(r)$ in Proposition \ref{prop:metropolized-flows}
include $\phi(r)=\min\{1,r\}$, which leads to the standard Metropolis-Hastings
acceptance probability, or $\phi(r)=r/(1+r)$ which corresponds to
Barker's dynamic. It is well known that for any $\phi$ satisfying
Proposition \ref{prop:metropolized-flows}-\ref{enu:metropolized-flow-phi}
one has $\phi(r)\leq\min\{1,r\}$ and that for Barker's choice $\frac{1}{2}\min\{1,r\}\leq\phi(r)$.
\end{example}

In order to be useful in practice a Markov transition of the type
given in (\ref{eq:metropolize-flow}) must be combined with another
transition in order to lead to an ergodic Markov chain \cite{gustafson1998guided,horowitz1991generalized}.
We focus here on $2-$cycles of $(\mu,Q)-$reversible Markov transitions.
\begin{thm}
\label{thm:metropolized-flow-2-cycle}Let $\mu$ be a probability
distribution on $\big(E,\mathscr{E}\big)$ and let $\psi$ satisfy
Proposition \ref{prop:metropolized-flows}-\ref{enu:metropolized-flow-psi}
for some isometric involution $Q$. Further for $i\in\{1,2\}$, let
$P_{i,2}$ be as in (\ref{eq:metropolize-flow}) for a mapping $\psi_{i}=\psi$
and some mapping $\phi_{i}$ satisfying Proposition \ref{prop:metropolized-flows}-\ref{enu:metropolized-flow-phi}
and let $P_{1,1}=P_{2,1}$ be a $(\mu,Q)-$reversible Markov transition.
Assume that $\phi_{1}\geq\phi_{2}$, then for any $f\in L^{2}(\mu)$
such that $Qf=f$ and $\lambda\in[0,1)$ we have
\[
{\rm var}_{\lambda}(f,\{P_{1,1},P_{1,2}\})\leq{\rm var}_{\lambda}(f,\{P_{2,1},P_{2,2}\}).
\]
In particular $\phi(r)=\min\{1,r\}$ achieves the smallest $\lambda-$asymptotic
variance.
\end{thm}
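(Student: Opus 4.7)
The plan is to apply Theorem \ref{thm:mdo-extension-main-result} with the version that compares Dirichlet forms of the kernels $P_{i,j}Q$, since the structure of the metropolized-flow kernels (\ref{eq:metropolize-flow}) becomes particularly transparent after composition with $Q$ on the right.

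First I would compute $P_{i,2}Q$ explicitly. Using $\xi^2 = {\rm Id}$ one finds that for any bounded measurable $f$,
\[
P_{i,2}Qf(z) = \phi_{i}\circ r(z)\, f(\xi\circ\psi(z)) + \bigl[1-\phi_{i}\circ r(z)\bigr] f(z),
\]
so $P_{i,2}Q$ is a Metropolis-type kernel based on the involution $\xi\circ\psi$ (cf.\ Proposition \ref{prop:metropolized-flows} and the fact that $PQ$ is $\mu$-reversible in its proof), with acceptance probability $\phi_{i}\circ r$. Its Dirichlet form is therefore
\[
\mathcal{E}(g,P_{i,2}Q) = \tfrac{1}{2}\int \bigl[g(\xi\circ\psi(z))-g(z)\bigr]^{2}\,\phi_{i}\circ r(z)\,\mu({\rm d}z).
\]
Since $\phi_{1}\ge\phi_{2}$ pointwise, the pointwise domination of the integrands gives $\mathcal{E}(g,P_{1,2}Q)\ge\mathcal{E}(g,P_{2,2}Q)$ for every $g\in L^{2}(\mu)$.

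For the index $i=1$ the assumption $P_{1,1}=P_{2,1}$ yields $\mathcal{E}(g,P_{1,1}Q)=\mathcal{E}(g,P_{2,1}Q)$ trivially. Thus the hypothesis of Theorem \ref{thm:mdo-extension-main-result} (in the $P_{i,j}Q$ form) is satisfied for both $i\in\{1,2\}$: each $P_{i,j}$ is $(\mu,Q)$-reversible by Proposition \ref{prop:metropolized-flows} and by assumption on $P_{i,1}$, and the Dirichlet-form ordering holds. Invoking that theorem with $f$ such that $Qf=f$ yields the desired comparison of $\lambda$-asymptotic variances.

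The final claim is then an immediate consequence of the well-known bound $\phi(r)\le\min\{1,r\}$ valid for any $\phi$ meeting Proposition \ref{prop:metropolized-flows}-\ref{enu:metropolized-flow-phi}: taking $\phi_{1}(r):=\min\{1,r\}$ dominates any admissible $\phi_{2}$, so the Metropolis--Hastings choice achieves the smallest $\lambda$-asymptotic variance in the class. The only genuinely non-routine step is the explicit identification of $P_{i,2}Q$ as a symmetric Metropolis-type kernel in the involution $\xi\circ\psi$, but since the proof of Proposition \ref{prop:metropolized-flows} already performs exactly this computation, no real obstacle remains; the rest is a direct appeal to Theorem \ref{thm:mdo-extension-main-result}.
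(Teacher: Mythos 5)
Your proposal is correct and follows essentially the same route as the paper: compute the Dirichlet form of $P_{i,2}Q$ explicitly as $\tfrac12\int\phi_i\circ r(z)[g\circ\xi\circ\psi(z)-g(z)]^2\mu(\mathrm{d}z)$, order it via $\phi_1\ge\phi_2$, and conclude with Theorem \ref{thm:mdo-extension-main-result}. The only (harmless) slip is writing ``the index $i=1$'' where you mean the second index $j=1$ when invoking $P_{1,1}=P_{2,1}$.
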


\begin{proof}
The expression in (\ref{eq:dirichlet-form}) leads to
\[
\mathcal{E}(g,P_{i,2}Q)=\frac{1}{2}\int\phi_{i}\circ r(z)[g\circ\xi\circ\psi(z)-g(z)]^{2}\mu({\rm d}z),
\]
for $i\in\{1,2\}$ and we conclude with Theorem \ref{thm:mdo-extension-main-result}.
\end{proof}
\begin{example}[Example \ref{exa:time-reversal-hamilton} (ctd)]
Assume here for presentational simplicity that $\mathsf{X}=\mathsf{V}=\mathbb{R}$
that $\mu$ has a density with respect to the Lebesgue measure and
$\mu(x,v)=\pi(x)\varpi(v)$ where $\varpi$ is a $\mathcal{N}(0,\sigma^{2})$
for some $\sigma^{2}>0$. In this setup a popular choice \cite{duane1987hybrid}
for $P_{1,1}=P_{2,1}$ is a momentum refreshment of the type, for
some $\omega\in(0,\pi/2]$,
\[
R_{\omega}\big((x,v);{\rm d}(y,w)\big)=\int\delta_{(x,v\cos\omega+v'\sin\omega)}\big({\rm d}(y,w)\big)\varpi({\rm d}v').
\]
Lemma \ref{lem:horowitz-update-Q-adjoint} below establishes that
the corresponding operator is $(\mu,Q)-$self-adjoint. We can therefore
apply Theorem \ref{thm:metropolized-flow-2-cycle} and deduce, for
example, that the choice $\phi(r)=\min\{1,r\}$ for all $\omega\in(0,\pi/2]$
is optimum. Further since $R_{\omega}(x,v;\{x\}\times\mathsf{V})=1$
we note that the second statement of Theorem \ref{thm:mdo-main-result}
holds, a result partially known for $\omega=\pi/2$ since in this
case for $i\in\{1,2\}$ $P_{i,1}P_{i,2}$ is $\mu-$reversible and
Theorem \ref{thm:carraciolo-tierney} can be applied.
\end{example}

\begin{lem}
\label{lem:horowitz-update-Q-adjoint}For any $\omega\in(0,\pi/2]$,
$R_{\omega}$ is $(\mu,Q)-$self-adjoint for $Q$ such that $Qf(x,v)=f(x,-v)$
for $f\in\mathbb{R}^{E}$.
\end{lem}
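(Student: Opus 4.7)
The plan is to show directly that $\langle R_\omega f,g\rangle_\mu=\langle f,QR_\omega Qg\rangle_\mu$ for all $f,g\in L^2(\mu)$, by reducing everything to a change of variables on two independent Gaussians.

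First I would write out both inner products explicitly. The left-hand side expands to
\[
\langle R_\omega f,g\rangle_\mu=\int f(x,v\cos\omega+v'\sin\omega)\,g(x,v)\,\pi({\rm d}x)\,\varpi({\rm d}v)\,\varpi({\rm d}v').
\]
For the right-hand side, a short computation using $Qh(x,v)=h(x,-v)$ and the symmetry $\varpi({\rm d}v')=\varpi(-{\rm d}v')$ (apply the change $v'\mapsto-v'$) gives $QR_\omega Qg(x,v)=\int g(x,v\cos\omega-v'\sin\omega)\,\varpi({\rm d}v')$, so
\[
\langle f,QR_\omega Qg\rangle_\mu=\int f(x,v)\,g(x,v\cos\omega-v'\sin\omega)\,\pi({\rm d}x)\,\varpi({\rm d}v)\,\varpi({\rm d}v').
\]

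Second, I would exploit the rotation invariance of the product measure $\varpi\otimes\varpi$ on $\mathbb{R}^2$: for any integrable $F$,
\[
\int F(v,v')\,\varpi({\rm d}v)\,\varpi({\rm d}v')=\int F\big(v\cos\omega-v'\sin\omega,\;v\sin\omega+v'\cos\omega\big)\,\varpi({\rm d}v)\,\varpi({\rm d}v').
\]
Applying this to $F(v,v'):=f(x,v\cos\omega+v'\sin\omega)\,g(x,v)$ and using the identity $(v\cos\omega-v'\sin\omega)\cos\omega+(v\sin\omega+v'\cos\omega)\sin\omega=v$ collapses the first argument of $f$ to $v$, yielding precisely $\int f(x,v)\,g(x,v\cos\omega-v'\sin\omega)\,\varpi({\rm d}v)\,\varpi({\rm d}v')$. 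Integrating against $\pi({\rm d}x)$ gives the required equality.

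There is essentially no obstacle: the only thing one must be careful about is the sign in the rotation and the use of $\varpi$'s symmetry, so that the transformed integral has the exact form appearing in $\langle f,QR_\omega Qg\rangle_\mu$ rather than its conjugate. Alternatively, one could split the proof into two observations which together yield the claim: (i) $QR_\omega Q=R_\omega$, which is immediate from the symmetry of $\varpi$ under $v'\mapsto-v'$, and (ii) $R_\omega$ is $\mu$-self-adjoint, which follows from the same rotation-invariance argument and reflects the fact that $(v,v\cos\omega+v'\sin\omega)$ has a symmetric bivariate Gaussian law under $v,v'\stackrel{\text{iid}}{\sim}\varpi$. The one-shot argument above is essentially the combination of these two observations in a single change of variables.
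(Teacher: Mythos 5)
Your proposal is correct and is essentially the paper's own argument: both compute $QR_{\omega}Qg(x,v)=\int g(x,v\cos\omega-v'\sin\omega)\,\varpi({\rm d}v')$ and then use the rotation invariance of $\varpi\otimes\varpi$ (the same orthogonal change of variables $(v,v')\mapsto(v\cos\omega-v'\sin\omega,\,v\sin\omega+v'\cos\omega)$) to identify $\langle f,QR_{\omega}Qg\rangle_{\mu}$ with $\langle R_{\omega}f,g\rangle_{\mu}$. The only cosmetic difference is the direction in which the change of variables is applied.
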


\begin{proof}
Let $f,g\in L^{2}(\mu)$. Note that for $x\in\mathsf{X}$,
\begin{align*}
\int f(x,v)QR_{\omega}Qg(x,v)\varpi({\rm d}v)=\int f(x,v)R_{\omega}Qg(x,-v)\varpi({\rm d}v) & =\int f(x,v)Qg(x,-v\cos\omega+w\sin\omega)\varpi({\rm d}v)\varpi({\rm d}w)\\
 & =\int f(x,v)g(x,v\cos\omega-w\sin\omega)\varpi({\rm d}v)\varpi({\rm d}w)\\
 & =\int f(x,v'\cos\omega+w'\sin\omega)g(x,v')\varpi({\rm d}v')\varpi({\rm d}w'),
\end{align*}
where on the last line we have used the change of variable $(v',w')=\left(v\cos\omega-w\sin\omega,v\sin\omega+w\cos\omega\right)$
and the fact that $\varpi\otimes\varpi$ is invariant by rotation.
We therefore deduce that $\bigl\langle f,QR_{\omega}Qg\bigr\rangle_{\mu}=\bigl\langle R_{\omega}f,g\bigr\rangle_{\mu}$
and conclude.
\end{proof}
Another application of the results above is the extra chance HMC method
presented in \cite{campos2015extra}, equivalent to the ideas of \cite{sohl2014hamiltonian},
which can be seen as an extension to Horowitz's scheme \cite{horowitz1991generalized}.
Using the notation of Proposition \ref{prop:metropolized-flows} the
main idea is to define a variation of (\ref{eq:metropolize-flow})
where transitions to $\xi\circ\psi(x,v),\xi\circ\psi\circ\psi(x,v),\ldots$
are attempted in sequence until success.
\begin{example}
Here $\mathsf{X}=\mathbb{R}$ for simplicity and $\mu\big({\rm d}(x,v)\big)=\pi({\rm d}x)\varpi({\rm d}v)$
where $\varpi({\rm d}v)$ is a $\mathcal{N}(0,\sigma^{2})$. With
$Qf(x,v)=f(x,-v)$ for $f\in\mathbb{R}^{E}$ and $\psi$ as in Proposition
\ref{prop:metropolized-flows}-\ref{enu:metropolized-flow-psi} we
let $\psi^{0}={\rm Id}$ and $\psi^{k}=\psi\circ\psi^{k-1}$ for $k\in\mathbb{N}\setminus\{0\}$.
Define for $K\in\mathbb{N}\setminus\{0\}$,
\[
P_{K}\big((x,v);{\rm d}(y,w)\big):=\sum_{k=1}^{K}\beta_{k}(x,v){\rm \delta}_{\psi^{k}(x,v)}\big({\rm d}(y,w)\big)+\rho_{K}(x,v){\rm \delta}_{\xi(x,v)}\big({\rm d}(y,w)\big),
\]
where, with $\alpha_{0}(x,v)=0$ and for $k=1,\ldots,K$ $\alpha_{k}(x,v)=\max\big\{\alpha_{k-1}(x,v),\min\{1,r_{k}(x,v)\}\big\}$,
with
\[
r_{k}(x,v):=\begin{cases}
\frac{{\rm d}\mu^{\xi\circ\psi^{k}}/{\rm d}\nu_{k}(z)}{{\rm d}\mu/{\rm d}\nu_{k}(z)} & \text{if }{\rm d}\mu^{\xi\circ\psi^{k}}/{\rm d}\nu_{k}(z)>0\text{ and }{\rm d}\mu/{\rm d}\nu_{k}(z)>0,\\
0 & \text{otherwise},
\end{cases}
\]
and $\nu_{k}:=\mu+\mu^{\xi\circ\psi^{k}}$, $\beta_{k}(x,v)=\alpha_{k}(x,v)-\alpha_{k-1}(x,v)$
and $\rho_{K}(x,v):=1-\sum_{k=1}^{K}\beta_{k}(x,v)$. It is shown
in \cite[Appendix A]{campos2015extra} that this update is $(\mu,Q)-$reversible,
while it is pointed out that for $\omega\in(0,\pi/2]$, $R_{\omega}P_{K}$
is not. We can apply Theorem \ref{thm:mdo-extension-main-result}
to deduce that for any $f\in L^{2}(\mu)$ such that $Qf=f$ and any
$\omega\in(0,\pi/2]$, the mapping $K\mapsto{\rm var}_{\lambda}\big(f,\{R_{\omega},P_{K}\}\big)$
is non increasing, since from Lemma \ref{lem:monotone-dirichlet-extra}
below, $K\mapsto\mathcal{E}\big(g,P_{K}Q\big)$ is nondecreasing.
In fact, since $R_{\omega}(x,v;\{x\}\times\mathsf{V})=1$, for $f\in L^{2}(\pi)$
and $\breve{f}(x,v):=f(x)$ for $(x,v)\in E$, we also deduce that
$K\mapsto{\rm var}_{\lambda}\big(\breve{f},R_{\omega}P_{K}\big)$
is nonincreasing.
\end{example}

\begin{lem}
\label{lem:monotone-dirichlet-extra}For any $g\in L^{2}(\mu)$, $K\mapsto\mathcal{E}\big(g,P_{K}Q\big)$
is non-decreasing.
\end{lem}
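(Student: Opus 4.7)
The plan is to compute $\mathcal{E}(g,P_KQ)$ explicitly in terms of the coefficients $\beta_k$ and the flow iterates $\psi^k$, then observe that passing from $K$ to $K+1$ only adds a manifestly non-negative quantity.

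First I would unpack the kernel $P_K Q$. Since $Q$ corresponds to the involution $\xi$, the Markov kernel is $Q(z,\mathrm{d}z') = \delta_{\xi(z)}(\mathrm{d}z')$, so composing gives
\[
P_K Q\bigl((x,v),\mathrm{d}(y,w)\bigr) = \sum_{k=1}^{K}\beta_k(x,v)\,\delta_{\xi\circ\psi^{k}(x,v)}\bigl(\mathrm{d}(y,w)\bigr) + \rho_K(x,v)\,\delta_{(x,v)}\bigl(\mathrm{d}(y,w)\bigr),
\]
where the last term uses $\xi\circ\xi=\mathrm{Id}$. Substituting into the definition (\ref{eq:dirichlet-form}) and noting that the rejection contribution $\rho_K(x,v)\bigl[g(x,v)-g(x,v)\bigr]^2$ vanishes identically, I obtain
\[
\mathcal{E}(g,P_K Q) = \frac{1}{2}\sum_{k=1}^{K}\int \beta_k(x,v)\bigl[g\circ\xi\circ\psi^{k}(x,v) - g(x,v)\bigr]^2\,\mu\bigl(\mathrm{d}(x,v)\bigr).
\]

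The second step is to note that $\beta_k(x,v)=\alpha_k(x,v)-\alpha_{k-1}(x,v)$ depends on $k$ but \emph{not} on $K$; in particular the definition $\alpha_k = \max\{\alpha_{k-1},\min\{1,r_k\}\}$ makes $\alpha_k$ non-decreasing in $k$, so $\beta_k\ge 0$ pointwise. Consequently
\[
\mathcal{E}(g,P_{K+1}Q) - \mathcal{E}(g,P_K Q) = \frac{1}{2}\int \beta_{K+1}(x,v)\bigl[g\circ\xi\circ\psi^{K+1}(x,v) - g(x,v)\bigr]^2\,\mu\bigl(\mathrm{d}(x,v)\bigr) \ge 0,
\]
which proves that $K\mapsto\mathcal{E}(g,P_KQ)$ is non-decreasing.

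Essentially the only conceptual step is recognizing that the rejection branch of $P_K$ becomes a \emph{stay-put} branch in $P_KQ$ and hence drops out of the Dirichlet form; once this is observed, the monotonicity is immediate from the definition of the extra-chance acceptance weights $\beta_k$. I do not anticipate any obstacle beyond bookkeeping with the involution $\xi$ and checking that $\alpha_k$, and hence $\beta_k$, are indexed independently of the truncation level $K$.
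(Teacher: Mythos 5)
Your proof is correct and follows essentially the same route as the paper's: both compute $P_KQ$ explicitly, observe that the rejection branch $\delta_{\xi(x,v)}$ composed with $Q$ becomes a stay-put term that vanishes in the Dirichlet form, and conclude from the pointwise nonnegativity of $\beta_{K+1}$ (which, as you note, is independent of the truncation level). Your write-up is in fact slightly more explicit than the paper's, which omits the final monotonicity step as immediate.
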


\begin{proof}
For $g\in L^{2}(\mu)$, we have from (\ref{thm:mdo-main-result})
\begin{align*}
\mathcal{E}\big(g,P_{K}Q\big) & =\frac{1}{2}\int\mu\big({\rm d}(x,v)\big)P_{K}Q\big((x,v);{\rm d}(y,w)\big)\big[g(x,v)-g(y,w)\big]^{2}\\
 & =\sum_{k=1}^{K}\frac{1}{2}\int\mu\big({\rm d}(x,v)\big)\beta_{k}(x,v)\big[g(x,v)-g\circ\xi\circ\psi^{k}(x,v)\big]^{2}.
\end{align*}
The result follows.
\end{proof}
\begin{rem}
As pointed out by \cite{campos2015extra} the rational behind the
approach is that for $(x,v)\in E$, $k\mapsto H\circ\xi\circ\psi^{k}(x,v)$
typically fluctuates around $H(x,v)$. As a result if there exist
$(x_{0},v_{0})\in E$ and $k_{0}\in\mathbb{N}_{*}$ such that $\min_{1\leq k\leq k_{0}}H\circ\xi\circ\psi^{k}(x_{0},v_{0})>\max\big\{ H(x_{0},v_{0}),H\circ\xi\circ\psi^{k_{0}+1}(x_{0},v_{0})\big\}$
and, for example, $(x,v)\mapsto H\circ\xi\circ\psi^{k_{0}+1}(x,v)$
is continuous in a neighbourhood of $(x_{0},v_{0})$ then $\mu\big(\{\beta_{k_{0}+1}(X,V)>0\}\big)>0$
and $\mathcal{E}\big(g,P_{k_{0}+1}Q\big)-\mathcal{E}\big(g,P_{k_{0}}Q\big)>0$
for $L^{2}(\mu)\ni g\neq g\circ\xi\circ\psi^{k_{0}+1}$ on the aforementioned
neighbourhood, suggesting that the strict performance improvement
observed numerically in \cite{campos2015extra} for specific functions
holds more generally. A more precise investigation of this point is
far beyond the scope of the present work.
\end{rem}

It is natural to try to assess the impact of $\omega\in(0,\pi/2]$
involved in the definition of $R_{\omega}$ on the performance of
the type of algorithms presented in this section. In particular, a
long-standing question is whether partial momentum refreshment is
preferable to full refreshment, meaning replacing $R_{\omega}$ by
$R_{\pi/2}$. Application of Theorem \ref{thm:mdo-extension-main-result}
requires establishing that $\langle g,Q(R_{\pi/2}-R_{\omega})g\rangle_{\mu}$
does not change sign for all $g\in L^{2}(\mu)$. This, however, is
not the case. For example, setting $g_{1}(x,v):=v$ then the quantity
is positive but for $g_{2}(x,v):=v^{2}$ it is negative and we cannot
conclude.

\subsection{Lifted MCMC algorithms}

Assume we are interested in sampling from $\pi$ defined on $(\mathsf{X,\mathscr{X}})$
and are given two sub-stochastic kernels $T_{1}$ and $T_{-1}$ such
that for $x,y\in\mathsf{X}$ the following ``skewed'' detailed balance
holds,
\begin{equation}
\pi({\rm d}x)T_{1}(x,{\rm d}y)=\pi({\rm d}y)T_{-1}(y,{\rm d}x).\label{eq:TCV-starting-point-MDB}
\end{equation}
A generic example, related to the Metropolis-Hastings algorithm is
as follows.
\begin{example}
Let $\big\{ q_{1}(x,\cdot),x\in\mathsf{X}\big\}$ and $\big\{ q_{-1}(x,\cdot),x\in\mathsf{X}\big\}$
be two families of probability distributions on $(\mathsf{X,\mathscr{X}})$,
then the kernel defined for $v\in\{-1,1\}$ and $x,y\in\mathsf{X}$
as
\[
T_{v}(x,{\rm d}y)=\min\left\{ 1,r_{v}(x,y)\right\} q_{v}(x,{\rm d}y)\text{ with }r_{v}(x,y):=\begin{cases}
\frac{{\rm d}\gamma_{-v}/{\rm d}\nu(y,x)}{{\rm d}\gamma_{v}/{\rm d}\nu(x,y)} & \text{if }{\rm d}\gamma_{v}/{\rm d}\nu(x,y)\times{\rm d}\gamma_{-v}/{\rm d}\nu(y,x)>0,\\
0 & \text{otherwise},
\end{cases}
\]
where $\gamma_{v}\big({\rm d}(x,y)\big):=\pi({\rm d}x)q_{v}(x,{\rm d}y)$
and $\nu\big({\rm d}(x,y)\big):=\gamma_{v}\big({\rm d}(x,y)\big)+\gamma_{-v}\big({\rm d}(y,x)\big)$.
\end{example}

A standard way of constructing a $\pi-$reversible Markov transition
probability based on the above sub-kernels consists of the following
\begin{equation}
P(x,{\rm d}y)=\frac{1}{2}T_{1}(x,{\rm d}y)+\frac{1}{2}T_{-1}(x,{\rm d}y)+\delta_{x}({\rm d}y)\left(1-\frac{1}{2}T_{1}(x,\mathsf{X})-\frac{1}{2}T_{-1}(x,\mathsf{X})\right).\label{eq:TCV-thereversiblechain}
\end{equation}
The standard Metropolis-Hastings algorithm corresponds to the scenario
where $T_{1}=T_{-1}$. The aim of the lifting strategy is to stratify
the choice between $T_{1}$ and $T_{-1}$ by embedding the sampling
problem into that of sampling from $\mu({\rm d}(x,v))=\pi({\rm d}x)\varpi(v)=\frac{1}{2}\pi({\rm d}x)\mathbb{I}\big\{ v\in\{-1,1\}\big\}$
and using a Markov kernel defined on the corresponding extended space
$E=\mathsf{X}\times\{-1,1\}$ which promotes contiguous uses of $T_{1}$
or $T_{-1}$ along the iterations. As shown in \cite{turitsyn2011irreversible,vucelja2016lifting}
one possible solution, which imposes $P^{{\rm lifted}}\big((x,v);(A\setminus\{x\})\times\{-v\}\big)=0$
for any $A\in\mathscr{X}$, is
\begin{align*}
P^{{\rm lifted}}\big((x,v);{\rm d}(y,w)\big)= & \mathbb{I}\{w=v\}\Bigl[T_{v}(x,{\rm d}y)+\delta_{x}({\rm d}y)\big(1-T_{v}(x,\mathsf{X})-\rho_{v,-v}(x)\big)\Bigr]+\mathbb{I}\{w=-v\}\delta_{x}({\rm d}y)\rho_{v,-v}(x),
\end{align*}
where $\rho_{1,-1}(x)$ and $\rho_{-1,1}(x)$ are free parameters,
the ``switching rates'', required to satisfy for all $(x,v)\in E$
$0\leq\rho_{v,-v}(x)\leq1-T_{v}(x,\mathsf{X})$ and
\begin{equation}
\rho_{v,-v}(x)-\rho_{-v,v}(x)=T_{-v}(x,\mathsf{X})-T_{v}(x,\mathsf{X}).\label{eq:TCV-condition-rho}
\end{equation}
It is not difficult to check that under (\ref{eq:TCV-starting-point-MDB})
and (\ref{eq:TCV-condition-rho}) $P^{{\rm lifted}}$ is $(\mu,Q)-$self-adjoint,
for $Q$ such that $Qf(x,v)=f(x,-v)$ for $f\in\mathbb{R}^{E}$. There
are numerous known solutions to the condition above \cite{hukushima2013irreversible},
including 
\[
\tilde{\rho}_{v,-v}(x):=\max\left\{ 0,T_{-v}(x,\mathsf{X})-T_{v}(x,\mathsf{X})\right\} .
\]
It is remarked as intuitive in \cite{vucelja2016lifting} that among
the possible solutions to (\ref{eq:TCV-condition-rho}), this choice
should promote fastest exploration. We prove below that this is indeed
true, in the sense that this choice minimizes asymptotic variances,
as a consequence of Theorem \ref{thm:ordering_discrete}. We let $P^{{\rm lifted},\rho}$
denote the transition probability which uses $\rho_{v,-v}$.
\begin{thm}
For any switching rate $\rho_{v,-v}$ satisfying $0\leq\rho_{v,-v}(x)\leq1-T_{v}(x,\mathsf{X})$
for all $(x,v)\in E$ and (\ref{eq:TCV-condition-rho}), any $f\in L^{2}(\mu)$
such that $Qf=f$ and $\lambda\in[0,1)$, we have
\[
{\rm var}_{\lambda}\big(f,P^{{\rm lifted},\tilde{\rho}}\big)\leq{\rm var}_{\lambda}\big(f,P^{{\rm lifted},\rho}\big)\leq{\rm var}_{\lambda}\big(f,P^{{\rm lifted},1-T_{v}}\big).
\]
\end{thm}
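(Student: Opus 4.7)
The plan is to reduce everything to an ordering of Dirichlet forms and apply Theorem \ref{thm:ordering_discrete}. Since the excerpt already asserts that $P^{{\rm lifted},\rho}$ is $(\mu,Q)$-self-adjoint under (\ref{eq:TCV-starting-point-MDB})--(\ref{eq:TCV-condition-rho}), and since the relevant test functions satisfy $Qf=f$, the first branch of Theorem \ref{thm:ordering_discrete} applies and it suffices to show that for every $g\in L^{2}(\mu)$
\[
\mathcal{E}\bigl(g,QP^{{\rm lifted},\tilde\rho}\bigr)\;\geq\;\mathcal{E}\bigl(g,QP^{{\rm lifted},\rho}\bigr)\;\geq\;\mathcal{E}\bigl(g,QP^{{\rm lifted},1-T_{v}}\bigr).
\]

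The main computation is to make the $\rho$-dependence of $\mathcal{E}\bigl(g,QP^{{\rm lifted},\rho}\bigr)$ explicit. Because $Q$ swaps the velocity component, one has $QP^{{\rm lifted},\rho}((x,v);\cdot)=P^{{\rm lifted},\rho}((x,-v);\cdot)$, so plugging into (\ref{eq:dirichlet-form}) yields a $T$-part and a $\delta_{x}$-part. The only $\rho$-dependence appears in the contributions of $\delta_x$ with $w=-v$ and of $\delta_x$ with $w=v$; the latter contributes $0$ because $g(x,v)-g(x,v)=0$, while the former produces a term $-\tfrac{1}{2}\int\mu(\mathrm d(x,v))\,\rho_{-v,v}(x)[g(x,-v)-g(x,v)]^{2}$. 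Summing over $v\in\{-1,1\}$ using $\varpi(\pm 1)=1/2$ collapses this into
\[
\mathcal{E}\bigl(g,QP^{{\rm lifted},\rho}\bigr)=A(g)-\tfrac{1}{4}\int\pi(\mathrm dx)\bigl[\rho_{1,-1}(x)+\rho_{-1,1}(x)\bigr]\bigl[g(x,1)-g(x,-1)\bigr]^{2},
\]
where $A(g)$ is independent of $\rho$. This is the key structural fact: the Dirichlet form depends on $\rho$ only through the pointwise sum $s_\rho(x):=\rho_{1,-1}(x)+\rho_{-1,1}(x)$, and monotonically so.

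The problem therefore reduces to extremizing $s_\rho(x)$ under (\ref{eq:TCV-condition-rho}) and the bounds $0\leq\rho_{v,-v}(x)\leq 1-T_{v}(x,\mathsf X)$. Writing $a=T_{1}(x,\mathsf X)$, $b=T_{-1}(x,\mathsf X)$, the linear constraint $\rho_{1,-1}-\rho_{-1,1}=b-a$ has fixed difference, so $s_\rho(x)$ ranges over the interval $[\,|a-b|,\,2-a-b\,]$. A straightforward case analysis shows that the lower endpoint is attained precisely by $\tilde\rho_{v,-v}(x)=\max\{0,T_{-v}(x,\mathsf X)-T_{v}(x,\mathsf X)\}$ (since one of $\rho_{1,-1},\rho_{-1,1}$ is forced to vanish) and the upper endpoint is attained precisely by $\rho_{v,-v}(x)=1-T_{v}(x,\mathsf X)$ (both rates saturated). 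Consequently $\mathcal{E}\bigl(g,QP^{{\rm lifted},\tilde\rho}\bigr)\geq\mathcal{E}\bigl(g,QP^{{\rm lifted},\rho}\bigr)\geq\mathcal{E}\bigl(g,QP^{{\rm lifted},1-T_{v}}\bigr)$ for every admissible $\rho$.

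There is no serious obstacle: the content is the identification of the specific functional form of the $\rho$-contribution, after which both the Dirichlet-form ordering and the subsequent invocation of Theorem \ref{thm:ordering_discrete} are immediate. The one point requiring a small amount of care is checking that the cross-term from $w=v$ in $QP^{{\rm lifted},\rho}$ drops out of the Dirichlet form (it vanishes identically because of the squared difference), since this is what makes the dependence on $\rho$ so transparent.
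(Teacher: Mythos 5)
Your proposal is correct and follows essentially the same route as the paper: reduce to an ordering of Dirichlet forms of the reversible part of $P^{{\rm lifted},\rho}$, observe that the only $\rho$-dependence is the term $\propto\int\rho_{v,-v}(x)[g(x,v)-g(x,-v)]^{2}\mu({\rm d}(x,v))$, and invoke Theorem \ref{thm:ordering_discrete}. The only (cosmetic) difference is in the final extremization: you note the form depends on $\rho$ only through the sum $\rho_{1,-1}+\rho_{-1,1}$ and optimize that directly, whereas the paper establishes the componentwise sandwich $\tilde\rho_{v,-v}\leq\rho_{v,-v}\leq1-T_{v}(\cdot,\mathsf{X})$ by contradiction — since the constraint (\ref{eq:TCV-condition-rho}) fixes the difference $\rho_{1,-1}-\rho_{-1,1}$, the two formulations are equivalent.
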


\begin{proof}
Let $\rho_{1,v,-v}$ and $\rho_{2,v,-v}$ be switching rates such
that $0\leq\rho_{1,v,-v}(x,v)\leq\rho_{2,v,-v}(x,v)\leq1-T_{v}(x,\mathsf{X})$
for all $(x,v)\in E$, then from the identity in (\ref{eq:dirichlet-form})
\begin{align*}
\mathcal{E}\big(g,P^{{\rm lifted},\rho_{1}}Q\big) & -\mathcal{E}\big(g,P^{{\rm lifted},\rho_{2}}Q\big)=\frac{1}{2}\int\mu\big({\rm d}(x,v)\big)\big(\rho_{2,v,-v}(x)-\rho_{1,v,-v}(x)\big)\Big[g(x,v)-g(x,-v)\Big]^{2}\geq0,
\end{align*}
and the application of Theorem \ref{thm:ordering_discrete} leads
to ${\rm var}_{\lambda}\big(f,P^{{\rm lifted},\rho_{1}}\big)\leq{\rm var}_{\lambda}\big(f,P^{{\rm lifted},\rho_{2}}\big)$
for any $f\in L^{2}(\mu)$ such that $Qf=f$. We now establish that
$\rho_{v,-v}$ satisfying $0\leq\rho_{v,-v}(x)\leq1-T_{v}(x,\mathsf{X})$
and (\ref{eq:TCV-condition-rho}) implies $\tilde{\rho}_{v,-v}(x,v)\leq\rho_{v,-v}(x,v)\leq1-T_{v}(x,\mathsf{X})$
for all $(x,v)\in E$, notice that $1-T_{v}(x,\mathsf{X})$ satisfies
(\ref{eq:TCV-condition-rho}) and apply the result above twice to
conclude. We proceed by contradiction to establish the first inequality.
Assume there exists a switching rate $\rho_{v,-v}$ such that for
some $(x,v)\in E$ such that $\tilde{\rho}_{v,-v}(x)>0$ we have $\rho_{v,-v}(x)<\tilde{\rho}_{v,-v}(x)$.
Then from (\ref{eq:TCV-condition-rho}), 
\[
\tilde{\rho}_{v,-v}(x)-\tilde{\rho}_{-v,v}(x)=\rho_{v,-v}(x)-\rho_{-v,v}(x),
\]
or equivalently
\[
\tilde{\rho}_{v,-v}(x)-\rho_{v,-v}(x)=\tilde{\rho}_{-v,v}(x)-\rho_{-v,v}(x)>0,
\]
which is impossible since $\tilde{\rho}_{v,-v}(x)>0$ implies $\tilde{\rho}_{-v,v}(x)=0$
and we must have $\rho_{-v,v}(x)\geq0$. Therefore we must necessarily
have $\rho_{v,-v}(x)\geq\tilde{\rho}_{v,-v}(x)$ for all $(x,v)\in E$.
\end{proof}
\begin{rem}
Readers familiar with the delayed rejection Metropolis-Hastings update
may notice the similarity here since
\begin{multline*}
P^{{\rm lifted}}\big((v,x);{\rm d}(w,y)\big)=\mathbb{I}\{w=v\}T_{v}(x,\mathsf{X})\frac{T_{v}(x,{\rm d}y)}{T_{v}(x,\mathsf{X})}\\
+\big[1-T_{v}(x,\mathsf{X})\big]\left[\mathbb{I}\{w=v\}\delta_{x}({\rm d}y)\left(1-\frac{\rho_{v,-v}(x)}{1-T_{v}(x,\mathsf{X})}\right)+\mathbb{I}\{w=-v\}\delta_{x}({\rm d}y)\frac{\rho_{v,-v}(x)}{1-T_{v}(x,\mathsf{X})}\right],
\end{multline*}
where we require the property
\[
\big[1-T_{v}(x,\mathsf{X})\big]\left(1-\frac{\rho_{v,-v}(x)}{1-T_{v}(x,\mathsf{X})}\right)=\big[1-T_{-v}(x,\mathsf{X})\big]\left(1-\frac{\rho_{-v,v}(x)}{1-T_{-v}(x,\mathsf{X})}\right),
\]
and notice that
\[
1-\frac{\tilde{\rho}_{v,-v}(x)}{1-T_{v}(x,\mathsf{X})}=\min\left\{ 1,\frac{1-T_{-v}(x,\mathsf{X})}{1-T_{v}(x,\mathsf{X})}\right\} .
\]
The theorem above establishes that this latter form of acceptance
probability for the second stage of the update is again optimum in
this setup. The update however differs from the standard delayed rejection
update in that here the accept/rejection probability is integrated,
restricting implementability of the approach. We also note that our
results can be used to established superiority of the standard delayed
rejection strategy in the context of $(\mu,Q)-$reversible updates
and that integration of the rejection probability in the scenario
above is beneficial.
\end{rem}

One can compare the performance of algorithms relying on $P^{{\rm lifted}}$
and $P$. With a slight abuse of notation for any $\lambda\in[0,1)$
and $f\in L^{2}(\pi)$ we let ${\rm var}_{\lambda}\big(f,P^{{\rm lifted}}\big)={\rm var}_{\lambda}\big(\breve{f},P^{{\rm lifted}}\big)$
where for $(x,v)\in E$ we let $\breve{f}(x,v):=f(x)$.
\begin{thm}
\label{prop:TCV-compare-rev-and-lifted}For any $\lambda\in[0,1)$
and $f\in L^{2}(\pi)$, any switching rate $\rho_{v,-v}$ satisfying
$\tilde{\rho}_{v,-v}(x,v)\leq\rho_{v,-v}(x,v)\leq1-T_{v}(x,\mathsf{X})$
for all $(x,v)\in E$, ${\rm var}_{\lambda}\big(f,P^{{\rm lifted,\rho}}\big)\leq{\rm var}_{\lambda}\big(f,P\big),$
with $P$ given in (\ref{eq:TCV-thereversiblechain}).
\end{thm}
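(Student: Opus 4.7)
The natural attempt would be to lift $P$ to the $\mu$-reversible kernel $\bar{P}((x,v);{\rm d}(y,w)) := P(x,{\rm d}y)\varpi(w)$ on $E=\mathsf{X}\times\{-1,1\}$, note that ${\rm var}_\lambda(\breve{f},\bar P) = {\rm var}_\lambda(f,P)$ for $\breve{f}(x,v):=f(x)$, and invoke Theorem~\ref{thm:ordering_discrete}. This however fails: the required Dirichlet form ordering $\mathcal{E}(g,QP^{{\rm lifted},\rho})\geq\mathcal{E}(g,Q\bar P)$ need not hold for every $g\in L^2(\mu)$ (take $g(x,v)=v$; the comparison reduces to $\int T_1(x,\mathsf{X})\,\pi({\rm d}x)\geq 1/2$, which can obviously fail). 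I therefore bypass this obstruction by a direct Hilbert-space argument exploiting the $\mu$-symmetric part of $P^{{\rm lifted},\rho}$.

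Set $P^{{\rm sym}}:=\tfrac{1}{2}(P^{{\rm lifted},\rho}+QP^{{\rm lifted},\rho}Q)$, a $\mu$-self-adjoint Markov kernel; this is the $\mu$-symmetrisation of $P^{{\rm lifted},\rho}$ since $QP^{{\rm lifted},\rho}Q$ is its $\mu$-adjoint by Proposition~\ref{prop:QPsa}. The first ingredient is the elementary observation that only the $\mu$-symmetric part of $P^{{\rm lifted},\rho}$ contributes to quadratic forms: for every $h\in L^2(\mu)$,
\[
\langle h,(\mathrm{Id}-\lambda P^{{\rm lifted},\rho})h\rangle_\mu = \langle h,(\mathrm{Id}-\lambda P^{{\rm sym}})h\rangle_\mu.
\]
Since $\mathrm{Id}-\lambda P^{{\rm sym}}$ is positive self-adjoint for $\lambda\in[0,1)$, the variational identity $\langle b,S^{-1}b\rangle_\mu=\max_{h\in L^2(\mu)}\{2\langle h,b\rangle_\mu-\langle h,Sh\rangle_\mu\}$ applied with $S=\mathrm{Id}-\lambda P^{{\rm sym}}$, $b=\breve f$, and evaluated at the suboptimal $h=(\mathrm{Id}-\lambda P^{{\rm lifted},\rho})^{-1}\breve f$ yields
\[
\langle\breve f,(\mathrm{Id}-\lambda P^{{\rm lifted},\rho})^{-1}\breve f\rangle_\mu\leq\langle\breve f,(\mathrm{Id}-\lambda P^{{\rm sym}})^{-1}\breve f\rangle_\mu.
\]

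The second ingredient is a direct computation: because $\breve{f}$ is $v$-invariant the $\rho_{v,-v}$ contributions cancel, giving $P^{{\rm lifted},\rho}\breve{f}(x,v) = T_v f(x) + f(x)(1-T_v(x,\mathsf{X}))$, and averaging over $\pm v$ produces $P^{{\rm sym}}\breve{f} = \breve{Pf}$. Iterating (since $\breve{Pf}$ is again $v$-invariant) yields $(P^{{\rm sym}})^k\breve{f}=\breve{P^k f}$ for every $k\geq 0$, whence ${\rm var}_\lambda(\breve{f},P^{{\rm sym}})={\rm var}_\lambda(f,P)$. Combining the two ingredients delivers ${\rm var}_\lambda(f,P^{{\rm lifted},\rho})={\rm var}_\lambda(\breve{f},P^{{\rm lifted},\rho})\leq{\rm var}_\lambda(f,P)$, as claimed. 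Note that the bound is uniform in the choice of switching rate $\rho$ since the collapse $P^{{\rm sym}}\breve f = \breve{Pf}$ is insensitive to $\rho$.

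The main obstacle, and the reason the proof of Theorem~\ref{thm:ordering_discrete} does not apply off the shelf, is that the Dirichlet form comparison fails on the odd-in-$v$ part of $L^2(\mu)$; the variational bound above is in effect a Kipnis--Varadhan-style fact that among Markov kernels sharing a common $\mu$-symmetric part, the $\lambda$-asymptotic variance of any $v$-invariant observable is maximised by the symmetric one.
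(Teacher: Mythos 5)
Your proof is correct and is essentially the paper's own argument: the paper also passes to the additive symmetrisation $S(P^{\rm lifted})=[P^{\rm lifted}+(P^{\rm lifted})^{*}]/2$ (identical to your $P^{\rm sym}$, since $(P^{{\rm lifted},\rho})^{*}=QP^{{\rm lifted},\rho}Q$), bounds ${\rm var}_{\lambda}(\breve f,P^{{\rm lifted},\rho})$ by ${\rm var}_{\lambda}(\breve f,S(P^{\rm lifted}))$, and then shows $S(P^{\rm lifted})^{k}\breve f=\breve{P^{k}f}$ so that the latter equals ${\rm var}_{\lambda}(f,P)$. The only cosmetic difference is that the paper cites a classical lemma for the symmetrisation inequality whereas you re-derive it via the standard variational characterisation of $\langle b,S^{-1}b\rangle_{\mu}$; both are fine.
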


\begin{proof}
Fix $\lambda\in[0,1)$. First consider the additive symmetrization
of $P^{{\rm lifted}}$, $S\big(P^{{\rm lifted}}\big):=\big[P^{{\rm lifted}}+\big(P^{{\rm lifted}}\big)^{*}\big]/2$,
which is $\mu-$self-adjoint. From a classical result, see for example
\cite[Lemma 2]{andrieu2016random}, we have for $g\in L^{2}(\mu)$,
\[
{\rm var}_{\lambda}\big(g,P^{{\rm lifted}}\big)\leq{\rm var}_{\lambda}\big(g,S\big(P^{{\rm lifted}}\big)\big).
\]
For $h\in\mathbb{R}^{E}$ measurable and bounded,
\begin{align*}
P^{{\rm lifted}}Qh(x,v) & =\int T_{v}(x,{\rm d}y)h(y,-v)+\big(1-T_{v}(x,\mathsf{X})-\rho_{v,-v}(x)\big)h(x,-v)+\rho_{v,-v}(x)h(x,v),
\end{align*}
and therefore
\[
QP^{{\rm lifted}}Qh(x,v)=\int T_{-v}(x,{\rm d}y)h(y,v)+\big(1-T_{-v}(x,\mathsf{X})-\rho_{-v,v}(x)\big)h(x,v)+\rho_{-v,v}(x)h(x,-v).
\]
Consequently, 
\begin{multline*}
S\big(P^{{\rm lifted}}\big)h(x,v)=\int\frac{1}{2}\big[T_{v}(x,{\rm d}y)+T_{-v}(x,{\rm d}y)\big]h(y,v)+\left(1-\frac{T_{v}(x,\mathsf{X})+T_{-v}(x,\mathsf{X})}{2}\right)h(x,v)\\
+\frac{\rho_{v,-v}(x)+\rho_{-v,v}(x)}{2}\big(h(x,-v)-h(x,v)\big).
\end{multline*}
Therefore, for $f\in L^{2}(\pi)$ and $(x,v)\in E$, $S(P^{{\rm lifted}})\breve{f}(x,v)=Pf(x)$
and by a straightforward induction one can establish $S(P^{{\rm lifted}})^{k}\breve{f}(x,v)=P^{k}f(x)$
for $k\geq1$.  As a consequence for $f\in L^{2}(\pi)$ and $\lambda\in[0,1)$
we have ${\rm var}_{\lambda}\big(f,P\big)={\rm var}_{\lambda}\big(\breve{f},S\big(P^{{\rm lifted}}\big)\big)$
and we conclude.
\end{proof}
\begin{example}
In the scenario where $\mathsf{X}=\mathbb{R}$ or $\mathsf{X}=\mathbb{Z}$
and $\pi$ has a density with respect to the Lebesgue or counting
measure, \cite{gustafson1998guided} introduced the guided walk Metropolis,
whose transition probability is
\[
P^{{\rm GRW}}\big((v,x);{\rm d}(w,y)\big)=T_{v}^{{\rm guided}}\big(x,{\rm d}y\big)\mathbb{I}\{w=v\}+\delta_{x}({\rm d}y)\mathbb{I}\{w=-v\}\big[1-T_{v}^{{\rm guided}}\big(x,\mathsf{X}\big)\big]
\]
where, 
\[
T_{v}^{{\rm guided}}(x,{\rm d}y):=\int_{\mathsf{X}}\min\left\{ 1,\frac{\pi(x+|z|v)}{\pi(x)}\right\} q({\rm d}z){\rm \delta}_{x+|z|v}({\rm d}y),
\]
for some symmetric distribution $q(\cdot)$ on $\mathsf{V}=\mathbb{R}$
or $\mathsf{V}=\{-1,1\}$. It is straightforward to check that $T_{v}^{{\rm guided}}$
satisfies (\ref{eq:TCV-starting-point-MDB}), and hence we can construct
a lifted version of Gustafson's algorithm. We also notice that $P$
corresponds in this case to the random walk Metropolis algorithm with
proposal distribution $q\big(\cdot\big)$--we denote this algorithm
$P^{{\rm RW}}$. Our two earlier results establish that for any switching
rate $\rho_{v,-v}$, $f\in L^{2}(\pi)$ and $\lambda\in[0,1)$,
\[
{\rm var}_{\lambda}\big(f,P^{{\rm lifted-GRW,\rho}}\big)\leq{\rm var}_{\lambda}\big(f,P^{{\rm GRW}}\big)\leq{\rm var}_{\lambda}\big(f,P^{{\rm RW}}\big).
\]
\end{example}

\subsection{Neal's scheme to avoid backtracking}

In \cite{neal2004improving} the author describes a generic way of
modifying a reversible Markov chain defined on a finite state space
$\mathsf{X}$ to reduce ``backtracking'' (a special case is also
discussed in \cite{diaconis2000analysis}). More specifically, assume
we are interested in sampling from some probability distribution $\pi$
defined on $\mathsf{X}$ and that we do so by using a $\pi$-reversible
(first order) Markov transition $T_{2}$ defined on $\mathsf{X}$.
Informally the idea in \cite{neal2004improving} is to modify the
first order Markov chain of transition $T_{2}$ into a second order
Markov chain $T_{1}$ to ensure that given a realization $X_{0},X_{1},\ldots,X_{k-1},X_{k}$
for some $k\geq1$ the new chain samples $X_{k+1}$ conditional upon
$X_{k}$ and $X_{k-1}$ and prevents the occurrence of the event $X_{k+1}=X_{k-1}$.
A probabilistic argument is developed in \cite{neal2004improving}
for $\mathsf{X}$ finite to establish that the resulting chain produces
estimators with an asymptotic variance that cannot exceed that of
estimators from the original chain. We show here that this holds more
generally for countable spaces and is a direct consequence of $(\mu,Q)-$self-adjointness
for a particular $Q$, the bivariate first order representation of
a second order univariate Markov chain, as used in \cite{neal2004improving}
and the application of Theorem \ref{thm:ordering_discrete}. For simplicity
of exposition we assume $0<T_{2}(x_{1},x_{2})<1$ for $x_{1},x_{2}\in\mathsf{X}$.
First define the extended probability distribution on $\mathsf{X}\times\mathsf{X}$
\[
\mu(x_{1},x_{2}):=\pi(x_{1})T_{2}(x_{1},x_{2})=\pi(x_{2})T_{2}(x_{2},x_{1}),
\]
for $(x_{1},x_{2})\in\mathsf{X}\times\mathsf{X}$. Setting $Qf(x_{1},x_{2}):=f(x_{2},x_{1})$,
we notice that reversibility of $T_{2}$ implies that $Q$ is an $\mu-$isometric
involution. Let $M_{2}\big((x_{1},x_{2});(y_{1},y_{2})\big):=\mathbb{I}\{y_{1}=x_{1}\}T_{2}(x_{1},y_{2})$
for $(x_{1},x_{2})\in\mathsf{X}\times\mathsf{X}$ and notice that
$M_{2}$ is $\mu-$reversible. The Markov chain of transition $P_{2}=QM_{2}$
is therefore $(\mu,Q)-$reversible from Proposition \ref{prop:QPsa}.
Following an idea of Liu \cite{liu1996peskun}, it is suggested in
\cite{neal2004improving} to use instead the transition $P_{1}=QM_{1}$,
where the $\mu-$reversible component $M_{2}$ is replaced with the
following ($\mu-$reversible) Metropolis-Hastings update,
\[
M_{1}((x_{1},x_{2});(y_{1},y_{2})):=\mathbb{I}\{y_{1}=x_{1}\}\Big[U\big((x_{1},x_{2});(y_{1},y_{2})\big)+\mathbb{I}\{y_{2}=x_{2}\}\big(1-U((x_{1},x_{2});\mathsf{X}\times\mathsf{X})\big)\Big],
\]
where
\[
U\big((x_{1},x_{2});(y_{1},y_{2})\big):=\frac{T_{2}(x_{1},y_{2})\mathbb{I}\{y_{2}\neq x_{2}\}}{1-T_{2}(x_{1},x_{2})}\min\left\{ 1,\frac{1-T_{2}(x_{1},x_{2})}{1-T_{2}(y_{1},y_{2})}\right\} .
\]
The $(\mu,Q)-$reversible kernel $P_{1}$ is designed so that backtracking,
the probability of returning to $x_{1}$ when sampling $y_{2}$ conditional
upon $x_{2}$, of the chain is reduced, compared to $P_{2}$. Let
$\{Z_{k},k\geq0\}$ denote a realization of the homogeneous Markov
chain of transition $P_{i}$ (for $i\in\{1,2\}$) and arbitrary initial
condition, one can check that its first component is a realisation
$\{X_{k},k\geq0\}$ of the Markov chain of transition $T_{i}$, and
in fact $Z_{k}=(X_{k},X_{k+1})$ for $k\geq0$. With an abuse of notation,
for any $\lambda\in[0,1)$ and $f\in L^{2}(\pi)$ we let ${\rm var}_{\lambda}\big(f,T_{1}\big):={\rm var}_{\lambda}\big(\breve{f},P_{1}\big)$
where for any $x_{1},x_{2}\in\mathsf{X}$, $\breve{f}(x_{1},x_{2}):=f(x_{1})$.
\begin{thm}
For any $g\in L^{2}(\mu)$ such that $Qg=g$, and $\lambda\in[0,1)$
we have ${\rm var}_{\lambda}\big(g,P_{1}\big)\leq{\rm var}_{\lambda}\big(g,P_{2}\big)$
and as a consequence, for any $f\in L^{2}(\pi)$
\[
{\rm var}_{\lambda}\big(f,T_{1}\big)\leq{\rm var}_{\lambda}\big(f,T_{2}\big).
\]
\end{thm}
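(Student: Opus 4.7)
The plan is to handle the two claims with two distinct tools. For the first claim, concerning $Q$-invariant $g$, I would apply Theorem~\ref{thm:ordering_discrete}, case~1. Since $P_i = Q M_i$ and $Q^2 = {\rm Id}$, the hypothesis $\mathcal{E}(g, QP_1) \geq \mathcal{E}(g, QP_2)$ reduces to $\mathcal{E}(g, M_1) \geq \mathcal{E}(g, M_2)$, i.e., a Peskun-type comparison of the two $\mu$-reversible kernels $M_1$ and $M_2$. Both kernels only ever update the second coordinate, so it suffices to compare their off-diagonal mass at points $(x_1, y_2)$ with $y_2 \neq x_2$: for $M_1$ this mass equals $T_2(x_1, y_2) \min\{(1-T_2(x_1,x_2))^{-1}, (1-T_2(x_1,y_2))^{-1}\}$, which dominates $T_2(x_1, y_2) = M_2((x_1, x_2); (x_1, y_2))$ since the minimum is at least~$1$. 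Corollary~\ref{cor:peskun} then supplies the Dirichlet-form ordering and Theorem~\ref{thm:ordering_discrete} delivers the first claim.

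For the ``as a consequence'' statement, applying Theorem~\ref{thm:ordering_discrete} directly to $\breve f$ fails: $Q\breve f(x_1, x_2) = f(x_2) \neq \breve f$, so $\breve f$ is neither $Q$-symmetric nor $Q$-antisymmetric. Splitting $\breve f = g_0 + h_0$ with $g_0 = (\breve f + Q\breve f)/2$ and $h_0 = (\breve f - Q\breve f)/2$ into $Q$-eigenparts does additively split $\mathrm{var}_\lambda(\breve f, P_i)$ (the cross terms vanish by $(\mu, Q)$-self-adjointness of $P_i$), but Theorem~\ref{thm:ordering_discrete} then yields \emph{opposing} inequalities on $g_0$ and $h_0$ and cannot conclude. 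Instead, I would invoke the ``Further'' part of Theorem~\ref{thm:mdo-main-result} applied to the factorization $P_i = Q \cdot M_i$, viewed as a 2-cycle of the two $\mu$-reversible kernels $P_{i,1} := Q$ (identical across $i$) and $P_{i,2} := M_i$. The crucial observation is that $M_i \breve f = \breve f$: since $M_i$ is a stochastic kernel whose transitions preserve the first coordinate, and $\breve f$ depends only on that coordinate, $\breve f$ is fixed by $M_i$. This supplies the invariance hypothesis $P_{i,2} f = f$ of the ``Further'' part, while the Dirichlet-form hypotheses are trivial on the $Q$-side (same kernel for both $i$) and come from the Peskun computation above on the $M$-side. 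The conclusion $\mathrm{var}_\lambda(\breve f, Q M_1) \leq \mathrm{var}_\lambda(\breve f, Q M_2)$ is exactly $\mathrm{var}_\lambda(f, T_1) \leq \mathrm{var}_\lambda(f, T_2)$ by the stated abuse of notation.

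The main obstacle is precisely to sidestep the temptation of Theorem~\ref{thm:ordering_discrete}: recognising that $\breve f$ lies in the fixed space of the $\mu$-reversible factor $M_i$ and exploiting this via the 2-cycle result of Theorem~\ref{thm:mdo-main-result} is the decisive step; any attempt to attack $\breve f$ through its $Q$-eigen-decomposition runs into the opposing-signs cancellation problem and is inconclusive.
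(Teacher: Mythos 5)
Your argument for the first claim coincides with the paper's: the pointwise domination $M_1\geq M_2$ off the diagonal gives $\mathcal{E}(g,M_1)\geq\mathcal{E}(g,M_2)$, and since $QP_i=M_i$ this is exactly the Dirichlet hypothesis of Theorem \ref{thm:ordering_discrete}, whose case (a) yields ${\rm var}_\lambda(g,P_1)\leq{\rm var}_\lambda(g,P_2)$ for $Qg=g$. For the second claim your route is genuinely different from, but just as valid as, the paper's. You exploit the invariance $M_i\breve f=\breve f$ (correct: $M_i$ fixes the first coordinate and $\breve f$ depends only on it) to invoke the ``Further'' part of Theorem \ref{thm:mdo-main-result} for the $2$-cycle $P_{i,1}:=Q$, $P_{i,2}:=M_i$, whose composition is $QM_i=P_i$; the required Dirichlet orderings hold trivially on the $Q$-component and by Peskun on the $M$-component, so the conclusion ${\rm var}_\lambda(\breve f,P_1)\leq{\rm var}_\lambda(\breve f,P_2)$ follows. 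The paper instead stays entirely within Theorem \ref{thm:ordering_discrete}: it applies the first claim to the $Q$-invariant function $g(x_1,x_2)=f(x_1)+f(x_2)$ and then proves, by a direct autocovariance computation using $Z_k=(X_k,X_{k+1})$, the identity ${\rm var}_\lambda(g,P_i)=-(1-\lambda^2)\lambda^{-1}{\rm var}_\pi(f)+(1+\lambda)^2\lambda^{-1}{\rm var}_\lambda(f,T_i)$ for $\lambda\neq0$, whose positive leading coefficient transfers the ordering. Your approach is more modular (it reuses an existing theorem and its built-in identity relating the cycle variance to ${\rm var}_{\lambda^2}(f,P_{i,1}P_{i,2})$), while the paper's is self-contained and yields an explicit quantitative relation between the two asymptotic variances; your preliminary observation that the $Q$-eigendecomposition of $\breve f$ produces opposing inequalities, and hence cannot conclude, is also correct and is precisely the obstruction both arguments are designed to circumvent.
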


\begin{proof}
Since for $(x_{1},x_{2})\neq(y_{1},y_{2})$
\begin{multline*}
M_{1}\big((x_{1},x_{2});(y_{1},y_{2})\big)=\mathbb{I}\{y_{1}=x_{1}\}T_{2}(x_{1},y_{2})\mathbb{I}\{y_{2}\neq x_{2}\}\min\left\{ \frac{1}{1-T_{2}(x_{1},x_{2})},\frac{1}{1-T_{2}(y_{1},y_{2})}\right\} \\
\geq\mathbb{I}\{y_{1}=x_{1}\}T_{2}(x_{1},y_{2}),
\end{multline*}
we deduce $QP_{1}\big((x_{1},x_{2});(y_{1},y_{2})\big)\geq QP_{2}\big((x_{1},x_{2});(y_{1},y_{2})\big)$
for $(x_{1},x_{2})\neq(y_{1},y_{2})$ and consequently from the identity
in (\ref{eq:dirichlet-form}) $\mathcal{E}(g,QP_{1})\geq\mathcal{E}(g,QP_{2})$
for any $g\in L^{2}(\mu)$. The first statement follows from Theorem
\ref{thm:ordering_discrete} (or Theorem \ref{thm:mdo-main-result}).
The second statement will follow by letting $g(x_{1},x_{2})=f(x_{1})+f(x_{2})$
for an arbitrary $f\in L^{2}(\pi)$ and once we have established that
for $\lambda\neq0$ and $i\in\{1,2\}$
\[
{\rm var}_{\lambda}\big(g,P_{i}\big)=-(1-\lambda^{2})\lambda^{-1}{\rm var}_{\pi}(f)+(1+\lambda)^{2}\lambda^{-1}{\rm var}_{\lambda}\big(f,T_{i}\big).
\]
Without loss of generality assume that $\pi(f)=0$. For both homogeneous
Markov chains of transitions $P_{1}$ and $P_{2}$ with initial condition
$Z_{0}=(X_{0},X_{1})\sim\mu$ we have $\mathbb{E}\big[g^{2}(Z_{0})\big]=2\mathbb{E}\big[f^{2}(X_{0})\big]+2\mathbb{E}\big[f(X_{0})f(X_{1})\big]$
and for $k\geq1$, with $Z_{k}=(X_{k},X_{k+1})$, we have
\begin{align*}
\mathbb{E}\big[g(Z_{0})g(Z_{k})\big] & =\mathbb{E}\big[\big(f(X_{0})+f(X_{1})\big)\big(f(X_{k})+f(X_{k+1})\big)\big]\\
 & =\mathbb{E}\big[f(X_{0})f(X_{k-1})\big]+2\mathbb{E}\big[f(X_{0})f(X_{k})\big]+\mathbb{E}\big[f(X_{0})f(X_{k+1})\big],
\end{align*}
therefore implying
\begin{align*}
\sum_{k\geq1}\lambda^{k}\mathbb{E}\big[g(Z_{0})g(Z_{k})\big] & =\lambda\sum_{k\geq0}\lambda^{k}\mathbb{E}\big[f(X_{0})f(X_{k})\big]+2\sum_{k\geq1}\lambda^{k}\mathbb{E}\big[f(X_{0})f(X_{k})\big]+\sum_{k\geq2}\lambda^{k-1}\mathbb{E}\big[f(X_{0})f(X_{k})\big]\\
 & =\lambda\mathbb{E}\big[f^{2}(X_{0})\big]+(\lambda^{2}+2\lambda)\mathbb{E}\big[f(X_{0})f(X_{1})\big]+(\lambda^{2}+2\lambda+1)\sum_{k\geq2}\lambda^{k-1}\mathbb{E}\big[f(X_{0})f(X_{k})\big].
\end{align*}
This yields for $i\in\{1,2\}$ and $\lambda\neq0$,
\begin{align*}
{\rm var}_{\lambda}\big(g,P_{i}\big) & =2(1+\lambda)\mathbb{E}\big[f^{2}(X_{0})\big]+2(\lambda^{2}+2\lambda+1)\mathbb{E}\big[f(X_{0})f(X_{1})\big]+2(1+\lambda)^{2}\sum_{k\geq2}\lambda^{k-1}\mathbb{E}\big[f(X_{0})f(X_{k})\big]\\
 & =-(1-\lambda^{2})\lambda^{-1}\mathbb{E}\big[f^{2}(X_{0})\big]+(1+\lambda)^{2}\lambda^{-1}\left(\mathbb{E}\big[f^{2}(X_{0})\big]+2\sum_{k\geq1}\lambda^{k}\mathbb{E}\big[f(X_{0})f(X_{k})\big]\right).
\end{align*}
Finally note that for $k\geq0$ $\mathbb{E}\big[\breve{f}(Z_{0})\breve{f}(Z_{k})\big]=\mathbb{E}\big[f(X_{0})f(X_{k})\big]$
and therefore ${\rm var}_{\lambda}\big(\breve{f},P_{1}\big)=\mathbb{E}\big[f^{2}(X_{0})\big]+2\sum_{k\geq1}\lambda^{k}\mathbb{E}\big[f(X_{0})f(X_{k})\big]$.
We can therefore conclude.
\end{proof}

\section{Continuous time scenario -- general results\label{sec:CT-theory}}

The continuous time scenario follows in part ideas similar to those
developed in the discrete time scenario, but requires the introduction
of the generator of the semigroup associated with the continuous time
process, leading to additional technical complications. In Subsection
\ref{subsec:continuous:Set-up-and-characterization} we develop a
crucial result of practical interest, Theorem \ref{prop:equiv-MDB-semigroup-generator},
which allows one to deduce that a (in general intractable) semigroup
is $(\mu,Q)-$self-adjoint when its generator is $(\mu,Q)-$symmetric
on a type of dense subset of its domain. In Subsection \ref{subsec:continuous-Ordering-of-asymptotic}
we establish the continuous time counterpart of Theorem \ref{thm:ordering_discrete},
that is show that ordering of tractable quantities involving the generators
of two $(\mu,Q)-$reversible processes implies an order on their asymptotic
variances (Theorem \ref{thm:continous-order-with-dirichlet}). We
remark that while establishing order rigorously may appear complex
and technical, checking the criterion suggesting order involves in
general elementary calculations. To the best of our knowledge no general
result is available in the continuous time reversible setup, that
is when $Q={\rm Id}$ in our setup, but note the works \cite{leisen2008extension,roberts2014minimising},
focused on particular scenarios.

\subsection{Set-up and characterization of $(\mu,Q)-$self-adjointness\label{subsec:continuous:Set-up-and-characterization}}

Let $\{Z_{t},t\geq0\}$ be a Markov process taking values in the space
$\mathsf{D}(\mathbb{R}_{+},E)$ of cadlag functions endowed with the
Skorokhod topology and corresponding probability space $\big(\Omega,\mathcal{F},\mathbb{P}\big)$.
We denote $\{P_{t},t\geq0\}$ the associated semi-group, assumed to
have an invariant distribution $\mu$ defined on $(E,\mathscr{E})$
and let $\big(\mathcal{D}^{2}\big(L,\mu\big),L\big)$ be the generator
associated with $\{P_{t},t\geq0\}$ i.e. $L$ and $\mathcal{D}^{2}\big(L,\mu\big)\subset L^{2}(\mu)$
are such that, with ${\rm Id}$ the identity operator,
\begin{equation}
\mathcal{D}^{2}\big(L,\mu\big):=\left\{ f\in L^{2}(\mu)\colon\lim_{t\downarrow0}\left\Vert t^{-1}\left(P_{t}-{\rm Id}\right)f-Lf\right\Vert _{\mu}=0\right\} .\label{eq:def-D2Lmu}
\end{equation}
From above $\{P_{t},t\geq0\}$ is a strongly continuous contraction,
$\mathcal{D}^{2}\big(L,\mu\big)$ is dense in $L^{2}(\mu)$ and $L$
is closed \cite[Corollary 1.6]{ethier2009markov}. For any $t\in\mathbb{R_{+}}$
we let $P_{t}^{*}$ denote the $L^{2}(\mu)-$adjoint of $P_{t}$,
and it is classical that $\{P_{t}^{*},t\geq0\}$ is a strongly continuous
contraction of invariant distribution $\mu$ and generator $\big(\mathcal{D}^{2}\big(L^{*},\mu\big),L^{*}\big)$,
the adjoint of $L$ \cite{pedersen2012analysis}, that is it holds
that for $f\in\mathcal{D}^{2}\big(L,\mu\big)$ and $g\in\mathcal{D}^{2}\big(L^{*},\mu\big)$,
$\bigl\langle Lf,g\bigr\rangle_{\mu}=\bigl\langle f,L^{*}g\bigr\rangle_{\mu}$.

In order to avoid repetition we group our basic assumptions on the
triplet $\big(\mu,Q,\{P_{t},t\geq0\}\big)$ used throughout this section.

\begin{hyp}\label{hyp:semi-group-strong-invariant}
\begin{enumerate}
\item $\mu$ is a probability distribution defined on $(E,\mathscr{E})$,
\item \label{enu:Pt-strongly-continuous}$\{P_{t},t\geq0\}$ is a strongly
continuous Markov semi-group of invariant distribution $\mu$,
\item \label{enu:Q-isometric-DL-invariant}$Q$ is a $\mu-$isometric involution.
\end{enumerate}
\end{hyp}
\begin{defn}
\label{def:Qself-adjoint-semigroup}We will say that the semi-group
$\{P_{t},t\geq0\}$ is $(\mu,Q)-$self-adjoint, if for all $f,g\in L^{2}(\mu)$
and $t\geq0$
\[
\bigl\langle P_{t}f,g\bigr\rangle_{\mu}=\bigl\langle f,QP_{t}Qg\bigr\rangle_{\mu}.
\]
\end{defn}

We aim to characterise the adjoint of the generator of a $(\mu,Q)-$self-adjoint
semigroup $\{P_{t},t\geq0\}$ and provide a practical simple condition
to establish this property for a given semigroup. We preface our first
results with a technical lemma. For two operators $\big(\mathcal{D}^{2}(A,\mu),A\big)$
and $\big(\mathcal{D}^{2}(B,\mu),B\big)$, $\mathcal{D}^{2}(AB,\mu):=\left\{ f\in\mathcal{D}^{2}(B,\mu)\colon Bf\in\mathcal{D}^{2}(A,\mu)\right\} $.
\begin{lem}
\label{lem:QPQ-semigroup}Let $\big(\mu,Q,\{P_{t},t\geq0\}\big)$
satisfying (A\ref{hyp:semi-group-strong-invariant}) and let $\{T_{t}:=QP_{t}Q,t\geq0\}$.
Then
\begin{enumerate}
\item $\big(\mu,Q,\{T_{t},t\geq0\}\big)$ satisfies (A\ref{hyp:semi-group-strong-invariant}),
\item the generator of $\{T_{t},t\geq0\}$ is $\big(\mathcal{D}^{2}\big(QLQ,\mu\big),QLQ\big)$.
\end{enumerate}
\end{lem}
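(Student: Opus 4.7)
The plan is to exploit the involution identity $Q^2=\mathrm{Id}$ and the isometry of $Q$ on $L^2(\mu)$ in a purely algebraic way; no new functional analytic input is needed beyond the existing hypotheses on $\{P_t,t\geq 0\}$.

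For part (1), I would first verify the semigroup property of $\{T_t,t\geq 0\}$ directly: $T_tT_s = QP_tQ\,QP_sQ = QP_tP_sQ = QP_{t+s}Q = T_{t+s}$, and $T_0 = Q^2 = \mathrm{Id}$. Invariance of $\mu$ follows from $\mu Q = \mu$ and $\mu P_t = \mu$: $\mu T_t = (\mu Q)P_tQ = \mu P_t Q = \mu Q = \mu$. For strong continuity on $L^2(\mu)$, I would use that $Q$ is a $\mu$-isometry, so for any $f \in L^2(\mu)$,
\[
\|T_tf - f\|_\mu = \|Q(P_tQf - Qf)\|_\mu = \|P_t(Qf) - Qf\|_\mu \xrightarrow[t\downarrow 0]{} 0
\]
by strong continuity of $\{P_t\}$ applied to $Qf\in L^2(\mu)$. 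Since $Q$ itself is unchanged, (A\ref{hyp:semi-group-strong-invariant})-\ref{enu:Q-isometric-DL-invariant} is automatic.

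For part (2), the key algebraic identity is
\[
t^{-1}(T_t - \mathrm{Id})f \;=\; t^{-1}(QP_tQ - Q^2)f \;=\; Q\bigl[t^{-1}(P_t - \mathrm{Id})Qf\bigr].
\]
Because $Q$ is a $\mu$-isometry, for any $h\in L^2(\mu)$, $\|Qh\|_\mu = \|h\|_\mu$; therefore
\[
\bigl\|t^{-1}(T_t-\mathrm{Id})f - QLQf\bigr\|_\mu \;=\; \bigl\|t^{-1}(P_t - \mathrm{Id})Qf - L(Qf)\bigr\|_\mu,
\]
valid whenever the right-hand-side limand makes sense. Reading this as an equivalence of convergence, $f$ lies in the domain of the generator of $\{T_t\}$ with image $QLQf$ if and only if $Qf\in\mathcal{D}^2(L,\mu)$.

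It remains to match this with the compositional domain $\mathcal{D}^2(QLQ,\mu)$. Unfolding the convention $\mathcal{D}^2(AB,\mu)=\{f\in\mathcal{D}^2(B,\mu): Bf\in\mathcal{D}^2(A,\mu)\}$ and using $\mathcal{D}^2(Q,\mu)=L^2(\mu)$ since $Q$ is bounded, one obtains $\mathcal{D}^2(LQ,\mu)=\{f\in L^2(\mu): Qf\in\mathcal{D}^2(L,\mu)\}$ and then $\mathcal{D}^2(QLQ,\mu)=\mathcal{D}^2(LQ,\mu)$ since the outer $Q$ imposes no extra condition. Hence the generator domain of $\{T_t\}$ coincides with $\mathcal{D}^2(QLQ,\mu)$, and the generator equals $QLQ$ on it. The only mildly delicate point is the bookkeeping with composite domains, but it reduces to the boundedness of $Q$ on $L^2(\mu)$, so no genuine obstacle arises.
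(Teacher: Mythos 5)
Your proposal is correct and follows essentially the same route as the paper's proof: part (1) via the algebraic semigroup identities, $\mu Q=\mu$, and the isometry $\|QP_tQf-f\|_\mu=\|P_tQf-Qf\|_\mu$; part (2) via the identity $t^{-1}(T_t-\mathrm{Id})f=Q\bigl[t^{-1}(P_t-\mathrm{Id})Qf\bigr]$ and the isometry of $Q$, arguing both inclusions of the domains exactly as the paper does.
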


\begin{proof}
From the properties of $Q$ and $\{P_{t},t\geq0\}$, it is immediate
that $\{T_{t},t\geq0\}$ is a semigroup leaving $\mu$ invariant.
Further for $f\in L^{2}(\mu)$ $\|QP_{t}Qf-f\|_{\mu}=\|P_{t}Qf-Qf\|_{\mu}$
from which the continuity follows. Denote $\big(\mathcal{D}^{2}(\tilde{L},\mu),\tilde{L}\big)$
the generator of $\{T_{t},t\geq0\}$. For $f\in\mathcal{D}^{2}(QLQ,\mu)$
we have $Qf\in\mathcal{D}^{2}(L,\mu)$ and therefore by (A\ref{hyp:semi-group-strong-invariant}),
\[
\lim_{t\downarrow0}\|t^{-1}(QP_{t}Q-{\rm Id})f-QLQf\|_{\mu}=\lim_{t\downarrow0}\|t^{-1}(P_{t}-{\rm Id})Qf-LQf\|_{\mu}=0,
\]
implying $\mathcal{D}^{2}(QLQ,\mu)\subset\mathcal{D}^{2}(\tilde{L},\mu)$
and $\tilde{L}f=QLQf$ for $f\in\mathcal{D}^{2}(QLQ,\mu)$. Similarly
for any $f\in\mathcal{D}^{2}(\tilde{L},\mu)$ 
\[
0=\lim_{t\downarrow0}\|t^{-1}(QP_{t}Q-{\rm Id})f-\tilde{L}f\|_{\mu}=\lim_{t\downarrow0}\|t^{-1}(P_{t}-{\rm Id})Qf-Q\tilde{L}QQf\|_{\mu},
\]
implying $Qf\in\mathcal{D}^{2}(L,\mu)$ and hence $f\in\mathcal{D}^{2}(QLQ,\mu)$.
We conclude.
\end{proof}
As a corollary one can characterise the generator of a $(\mu,Q)-$self-adjoint
semigroup.
\begin{prop}
\label{prop:properties-Qmu-SA-semigroup}Let $\big(\mu,Q,\{P_{t},t\geq0\}\big)$
satisfying (A\ref{hyp:semi-group-strong-invariant}) be $(\mu,Q)-$self-adjoint.
Then the generator of $\{P_{t}^{*},t\geq0\}$ is $\big(\mathcal{D}^{2}\big(QLQ,\mu\big),L^{*}=QLQ\big)$.
\end{prop}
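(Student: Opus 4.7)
The plan is to reduce Proposition \ref{prop:properties-Qmu-SA-semigroup} directly to Lemma \ref{lem:QPQ-semigroup} by identifying the adjoint semigroup $\{P_t^*, t\geq 0\}$ with the conjugated semigroup $\{QP_tQ, t\geq 0\}$. The $(\mu,Q)$-self-adjointness hypothesis gives, for every $f,g\in L^2(\mu)$ and $t\geq 0$,
\[
\bigl\langle P_t f, g\bigr\rangle_\mu = \bigl\langle f, QP_t Q g\bigr\rangle_\mu,
\]
so by uniqueness of the Hilbert-space adjoint $P_t^* = QP_tQ$ as bounded operators on $L^2(\mu)$. The first step of the proof will just be this one-line observation.

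Next I would invoke Lemma \ref{lem:QPQ-semigroup} applied to $\{T_t := QP_tQ, t\geq 0\}$: its conclusions give that $\{T_t\}$ is a strongly continuous Markov semi-group leaving $\mu$ invariant, with generator $\bigl(\mathcal{D}^2(QLQ,\mu), QLQ\bigr)$. Combined with Step 1, this says that the generator of $\{P_t^*, t\geq 0\}$ is $\bigl(\mathcal{D}^2(QLQ,\mu), QLQ\bigr)$. But the generator of $\{P_t^*\}$ is by definition (and by the standard fact cited just before the statement, via \cite{pedersen2012analysis}) the adjoint $\bigl(\mathcal{D}^2(L^*,\mu), L^*\bigr)$ of $(L,\mathcal{D}^2(L,\mu))$. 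Since the generator determines the semi-group and conversely the generator (domain and action) is uniquely associated with a strongly continuous semi-group, we conclude $L^* = QLQ$ with $\mathcal{D}^2(L^*,\mu) = \mathcal{D}^2(QLQ,\mu)$.

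There is essentially no obstacle beyond being careful that the identification $P_t^* = QP_tQ$ on $L^2(\mu)$ is a pointwise (in $t$) identity of bounded operators, so that the two semi-groups $\{P_t^*\}$ and $\{QP_tQ\}$ are the same object and hence generate the same unbounded operator with the same domain. The mild technicality worth spelling out is that the domain of the generator of a strongly continuous semi-group on a Hilbert space is characterized intrinsically by (\ref{eq:def-D2Lmu}), so once the two semi-groups coincide, the defining limits coincide, and the domains and actions agree. No additional density, closability, or core argument is needed, because Lemma \ref{lem:QPQ-semigroup} already furnishes the explicit description of the generator of $\{QP_tQ\}$.
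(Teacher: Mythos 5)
Your proposal is correct and follows exactly the same route as the paper, whose entire proof is the one-line observation that $P_{t}^{*}=QP_{t}Q$ (from $(\mu,Q)$-self-adjointness and uniqueness of the Hilbert-space adjoint) combined with Lemma \ref{lem:QPQ-semigroup}. The additional care you take in identifying the two semigroups pointwise in $t$ and matching domains via the intrinsic characterization (\ref{eq:def-D2Lmu}) is a faithful expansion of what the paper leaves implicit.
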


\begin{proof}
We use Lemma \ref{lem:QPQ-semigroup} and the fact that here $P_{t}^{*}=QP_{t}Q$
for $t\geq0$.
\end{proof}
The following allows one to check $(\mu,Q)-$self-adjointness of a
semigroup from the restriction of its generator to a particular type
of dense subspace. A subspace $\mathcal{A}\subset\mathcal{D}^{2}\big(L,\mu\big)$
is said to be a core for $L$ if the closure of the restriction $L_{\mid\mathcal{A}}$
of $L$ to $\mathcal{A}$ is $L$, where the closure is to be taken
with respect to $\|(f,g)\|_{\mu}:=\|f\|_{\mu}+\|g\|_{\mu}$ for $f,g\in L^{2}(\mu)$
on the graph $\mathcal{G}(L)=\left\{ (f,Lf)\colon f\in\mathcal{D}^{2}\big(L,\mu\big)\right\} $.
\begin{thm}
\label{prop:equiv-MDB-semigroup-generator}Let $\big(\mu,\{P_{t},t\geq0\},Q\big)$
satisfying (A\ref{hyp:semi-group-strong-invariant}). Assume that
$\mathcal{A}$ is a core for $\big(L,\mathcal{D}^{2}(L,\mu)\big)$
such that
\begin{enumerate}
\item $f\in\mathcal{A}$ implies $Qf\in\mathcal{A}$,
\item for all $f,g\in\mathcal{A}$ we have $\bigl\langle Lf,g\bigr\rangle_{\mu}=\bigl\langle f,QLQg\bigr\rangle_{\mu},$
\end{enumerate}
then $\{P_{t},t\geq0\}$ is $(\mu,Q)-$self-adjoint.
\end{thm}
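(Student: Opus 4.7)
The plan is to show that $P_t^{*}=QP_tQ$ for every $t\geq 0$, which is precisely what $(\mu,Q)$-self-adjointness amounts to. By Lemma \ref{lem:QPQ-semigroup}, $\{QP_tQ,t\geq 0\}$ is a strongly continuous semigroup with generator $\bigl(\mathcal{D}^{2}(QLQ,\mu),QLQ\bigr)$, while $\{P_t^{*},t\geq 0\}$ has generator $\bigl(\mathcal{D}^{2}(L^{*},\mu),L^{*}\bigr)$. Since by the Hille--Yosida theorem a strongly continuous contraction semigroup is uniquely determined by its generator, the whole problem reduces to proving the operator identity $L^{*}=QLQ$.

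The first step would be to establish that $\mathcal{A}$ is also a core for $QLQ$. Given $g\in\mathcal{D}^{2}(QLQ,\mu)$, the definition forces $Qg\in\mathcal{D}^{2}(L,\mu)$, so the core property of $\mathcal{A}$ for $L$ yields $f_n\in\mathcal{A}$ with $f_n\to Qg$ and $Lf_n\to LQg$ in $L^{2}(\mu)$. Setting $g_n:=Qf_n\in\mathcal{A}$ (using the stability $Q\mathcal{A}\subset\mathcal{A}$), the $\mu$-isometry of $Q$ gives $g_n\to g$ and $QLQ\,g_n=QLf_n\to QLQg$. With this in hand, the hypothesis $\langle Lf,g\rangle_{\mu}=\langle f,QLQg\rangle_{\mu}$ on $\mathcal{A}\times\mathcal{A}$ extends by the core property of $\mathcal{A}$ for $L$ to all $f\in\mathcal{D}^{2}(L,\mu)$ and $g\in\mathcal{A}$; this is exactly the statement that $\mathcal{A}\subset\mathcal{D}^{2}(L^{*},\mu)$ and $L^{*}g=QLQg$ for every $g\in\mathcal{A}$. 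Since $L^{*}$ is closed and $\mathcal{A}$ is a core for $QLQ$, a second passage to the limit extends the identity to all $g\in\mathcal{D}^{2}(QLQ,\mu)$, proving the inclusion $QLQ\subset L^{*}$.

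To upgrade this inclusion to an equality I would use the standard resolvent trick for contraction-semigroup generators: every $\lambda>0$ lies in the resolvent sets of both $QLQ$ and $L^{*}$. Given $g\in\mathcal{D}^{2}(L^{*},\mu)$, set $h:=(\lambda-L^{*})g$ and $g':=(\lambda-QLQ)^{-1}h$. Then $g'\in\mathcal{D}^{2}(QLQ,\mu)\subset\mathcal{D}^{2}(L^{*},\mu)$, and since $L^{*}$ agrees with $QLQ$ on $\mathcal{D}^{2}(QLQ,\mu)$ we have $(\lambda-L^{*})g'=(\lambda-QLQ)g'=h=(\lambda-L^{*})g$. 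Injectivity of $\lambda-L^{*}$ forces $g=g'\in\mathcal{D}^{2}(QLQ,\mu)$, so $\mathcal{D}^{2}(L^{*},\mu)=\mathcal{D}^{2}(QLQ,\mu)$ and $L^{*}=QLQ$. Passing back to semigroups yields $P_t^{*}=QP_tQ$ for all $t\geq 0$, i.e.\ $\langle P_tf,g\rangle_{\mu}=\langle f,QP_tQg\rangle_{\mu}$ for all $f,g\in L^{2}(\mu)$.

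The main obstacle is the coordination of the a priori distinct domains $\mathcal{D}^{2}(L^{*},\mu)$ and $\mathcal{D}^{2}(QLQ,\mu)$: the hypothesis only constrains $L$ and $QLQ$ on the common core $\mathcal{A}$, so upgrading a bilinear identity on $\mathcal{A}\times\mathcal{A}$ to an operator identity requires first propagating it through the closure of $L$, then through the closure of $QLQ$, and finally ruling out a strict extension using the contraction-semigroup resolvent argument above.
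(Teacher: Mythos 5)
Your proposal is correct and follows essentially the same route as the paper's proof: extend the bilinear identity from the core to deduce $QLQ\subset L^{*}$, then use the Hille--Yosida resolvent properties to force equality of the domains, and finally pass from $L^{*}=QLQ$ back to the semigroup identity $P_{t}^{*}=QP_{t}Q$. The only cosmetic differences are that you conclude the domain equality via injectivity of $\lambda-L^{*}$ where the paper uses surjectivity of $\lambda\mathrm{Id}-L$ together with a duality pairing, and you invoke uniqueness of the semigroup generated by a given generator where the paper cites the Duhamel formula.
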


\begin{proof}
Since $\mathcal{A}$ is a core for $\big(\mathcal{D}^{2}(L,\mu),L\big)$,
$Q\mathcal{A}=\mathcal{A}$ and $Q$ is continuous, we have $\bigl\langle Lf,g\bigr\rangle_{\mu}=\bigl\langle f,QLQg\bigr\rangle_{\mu}$
for $f\in\mathcal{D}^{2}(L,\mu)$ and $g\in\mathcal{D}^{2}(QLQ,\mu)$.
Indeed, since $\mathcal{A}$ is a core for $L$, for any $f\in\mathcal{D}^{2}(L,\mu)$
there exists $\{f_{n}\in\mathcal{A},n\in\mathbb{N}\}$ such that $\lim_{n\rightarrow\infty}\|f_{n}-f\|_{\mu}+\|Lf_{n}-Lf\|_{\mu}=0$.
Similarly for $g\in\mathcal{D}^{2}(QLQ,\mu)$, then $Qg\in\mathcal{D}^{2}(L,\mu)$
and from the definition of a core one can find $\{\gamma_{n}\in\mathcal{A},n\in\mathbb{N}\}$
such that $\lim_{n\rightarrow\infty}\|\gamma_{n}-Qg\|_{\mu}+\|L\gamma_{n}-LQg\|_{\mu}=0$
implying $\lim_{n\rightarrow\infty}\|g_{n}-g\|_{\mu}+\|LQg_{n}-LQg\|_{\mu}=0$
with $\{g_{n}:=Q\gamma_{n}\in\mathcal{A},n\in\mathbb{N}\}$. Further
$\mathcal{D}^{2}(QLQ,\mu)\subset\mathcal{D}^{2}(L^{*},\mu)$ as for
any $g\in\mathcal{D}^{2}(QLQ,\mu)$ we have that $\mathcal{D}^{2}(L,\mu)\ni f\mapsto\bigl\langle Lf,g\bigr\rangle_{\mu}=\bigl\langle f,QLQg\bigr\rangle_{\mu}$
which is bounded and can be extended to $L^{2}(\mu)$ by density of
$\mathcal{D}^{2}(L,\mu)$, and we conclude by definition of the adjoint
\cite[Paragraph 5.1.2]{pedersen2012analysis}. From Lemma \ref{lem:QPQ-semigroup}
and the Hille-Yosida theorem, we have that for all $\lambda>0$ ${\rm Ran}(\lambda{\rm Id}-QLQ)=L^{2}(\mu)$.
Fix $\lambda>0$, then for any $g\in\mathcal{D}^{2}(L^{*},\mu)$ there
exists $h\in\mathcal{D}^{2}(QLQ,\mu)$ such that $(\lambda{\rm Id}-QLQ)h=(\lambda{\rm Id}-L^{*})g$
and hence for any $f\in\mathcal{D}^{2}(L,\mu)$
\[
\bigl\langle(\lambda{\rm Id}-L)f,g\bigr\rangle_{\mu}=\bigl\langle f,(\lambda{\rm Id}-L^{*})g\bigr\rangle_{\mu}=\bigl\langle f,(\lambda{\rm Id}-QLQ)h\bigr\rangle_{\mu}=\bigl\langle(\lambda{\rm Id}-L)f,h\bigr\rangle_{\mu}.
\]
Again from the Hille-Yosida theorem we can take $\lambda>0$ as above
and have ${\rm Ran}(\lambda{\rm Id}-L)=L^{2}(\mu)$, and the equality
above translates into $\bigl\langle k,g\bigr\rangle_{\mu}=\bigl\langle k,h\bigr\rangle_{\mu}$
for any $k\in L^{2}(\mu)$ and hence $g=h\in\mathcal{D}^{2}(QLQ,\mu)$,
$\mathcal{D}^{2}(L^{*},\mu)=\mathcal{D}^{2}(QLQ,\mu)$ and $L^{*}=QLQ$.
From the Duhamel formula we deduce that $P_{t}^{*}=QP_{t}Q$ for $t\geq0$.
\end{proof}

\subsection{Ordering of asymptotic variances\label{subsec:continuous-Ordering-of-asymptotic}}

For $f\in L^{2}(\mu)$ we are interested, when this quantity exists,
in the limit of
\[
{\rm var}(f,L):=\lim_{t\rightarrow\infty}{\rm var}\left(t^{-1/2}\int_{0}^{t}f(Z_{s}){\rm d}s\right),
\]
where $Z_{0}\sim\mu$. In some circumstances (for instance when a
Foster-Lyapunov function can be identified \cite[Theorem 4.3]{glynn1996})
the limit above exists and has the following expression
\[
{\rm var}(f,L)=2\bigl\langle f,Rf\bigr\rangle_{\mu},
\]
where $Rf:=\int_{0}^{+\infty}P_{t}f{\rm d}t$. For $\lambda>0$ and
$f\in L^{2}(\mu)$ we introduce ${\rm var}_{\lambda}(f,L):=2\bigl\langle f,R_{\lambda}f\bigr\rangle_{\mu}$,
where $R_{\lambda}$ is the bounded operator defined as
\[
R_{\lambda}f:=\int_{0}^{+\infty}\exp(-\lambda t)P_{t}f{\rm d}t,
\]
referred to as the resolvent from now on. It is classical that for
any $f\in L^{2}(\mu)$, $(\lambda{\rm Id}-L)R_{\lambda}f=f$ and for
$f\in\mathcal{D}(L,\mu)$, $R_{\lambda}(\lambda{\rm Id}-L)f=f$. As
in the discrete time setup, we leave the issue of checking whether
$\lim_{\lambda\downarrow0}{\rm var}_{\lambda}(f,L)={\rm var}(f,L)$
as separate. We note the following straightforward result.
\begin{lem}
If $\big(\mu,Q,\{P_{t},t\geq0\}\big)$ satisfies (A\ref{hyp:semi-group-strong-invariant})
and is $(\mu,Q)-$self-adjoint, then for any $\lambda>0$ the bounded
operator $R_{\lambda}$ is also $(\mu,Q)-$self-adjoint.
\end{lem}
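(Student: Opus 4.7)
The plan is to unwind the definition of $R_\lambda$ as a Bochner-type integral against $\{P_t, t \geq 0\}$, apply the $(\mu,Q)$-self-adjointness of each $P_t$ pointwise in $t$, and fold the integral back through the bounded operator $Q$. Concretely, I want to verify the identity
\[
\langle R_\lambda f, g \rangle_\mu = \langle f, Q R_\lambda Q g \rangle_\mu \qquad \text{for all } f, g \in L^2(\mu),
\]
which by the definition in the preceding subsection is exactly $(\mu,Q)$-self-adjointness of $R_\lambda$.

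The first step is to justify bringing the inner product inside the integral defining $R_\lambda$. Since $\{P_t, t \geq 0\}$ is a strongly continuous contraction on $L^2(\mu)$, for every $f \in L^2(\mu)$ one has $\|e^{-\lambda t} P_t f\|_\mu \leq e^{-\lambda t}\|f\|_\mu$, which is integrable on $[0,\infty)$ for $\lambda > 0$; hence $R_\lambda f$ is a well-defined Bochner integral and for any $g \in L^2(\mu)$
\[
\langle R_\lambda f, g\rangle_\mu = \int_0^\infty e^{-\lambda t} \langle P_t f, g\rangle_\mu \, dt,
\]
by continuity of the linear functional $\langle \cdot, g\rangle_\mu$ (or directly by Fubini, since $|e^{-\lambda t}\langle P_t f, g\rangle_\mu| \leq e^{-\lambda t}\|f\|_\mu \|g\|_\mu$).

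Next, I apply the hypothesis that $\{P_t, t \geq 0\}$ is $(\mu,Q)$-self-adjoint, i.e. $\langle P_t f, g\rangle_\mu = \langle f, Q P_t Q g\rangle_\mu$ for every $t \geq 0$, to rewrite the integrand. Then I reverse the manipulation: since $Q$ is a bounded (isometric) operator on $L^2(\mu)$, it commutes with Bochner integration, so
\[
\int_0^\infty e^{-\lambda t} Q P_t Q g \, dt = Q \Bigl(\int_0^\infty e^{-\lambda t} P_t (Q g)\,dt\Bigr) = Q R_\lambda Q g,
\]
and pulling the inner product back out (again by Fubini / continuity) gives
\[
\int_0^\infty e^{-\lambda t} \langle f, Q P_t Q g\rangle_\mu\, dt = \langle f, Q R_\lambda Q g\rangle_\mu.
\]
Chaining the equalities yields the desired identity.

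The argument is essentially routine; the only mild technical point is the legitimacy of exchanging the time integral with $Q$ and with $\langle \cdot,\cdot\rangle_\mu$, which is handled by the contraction property of $\{P_t\}$ combined with $\lambda>0$ to guarantee absolute integrability. No core argument (unlike Theorem \ref{prop:equiv-MDB-semigroup-generator}) is needed, since we are working directly at the level of the bounded semigroup operators rather than the generator.
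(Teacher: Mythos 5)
Your proof is correct and is precisely the "straightforward" argument the paper has in mind (the paper states this lemma without proof): represent $R_\lambda$ as a Bochner integral, use strong continuity and the contraction property to exchange the inner product with the integral, apply $(\mu,Q)$-self-adjointness of each $P_t$, and pull the bounded operator $Q$ through the integral. Nothing further is needed.
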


For two semi-groups $\{P_{t,1},t\geq0\}$ and $\{P_{t,2},t\geq0\}$
leaving $\mu$ invariant and of generators $L_{1}$ and $L_{2}$ with
domains $\mathcal{D}^{2}(L_{1},\mu)$ and $\mathcal{D}^{2}(L_{2},\mu)$
we are interested in ordering ${\rm var}_{\lambda}(f,L_{1})$ and
${\rm var}_{\lambda}(f,L_{2})$ for $\lambda>0$. As in the discrete
time set-up the comparison relies on the Dirichlet forms, defined
as follows for a generator $L$ and $f\in\mathcal{D}^{2}(L,\mu)$,
\[
\mathcal{E}\big(f,L\big):=\bigl\langle f,-Lf\bigr\rangle_{\mu}.
\]
Our proof requires the introduction of interpolating processes, defined
at the level of their generators.

\begin{hyp}\label{hyp:semi-group-strong-invariant-interpolation}$\big(\mu,Q,\{P_{t,1},t\geq0\}\big)$
and $\big(\mu,Q,\{P_{t,2},t\geq0\}\big)$ satisfy (A\ref{hyp:semi-group-strong-invariant})
and are $(\mu,Q)-$self-adjoint. Their respective generators $\big(L_{1},\mathcal{D}^{2}(L_{1},\mu)\big)$
and $\big(L_{2},\mathcal{D}^{2}(L_{2},\mu)\big)$ are assumed
\begin{enumerate}
\item to have a common core $\mathcal{A}$ dense in $L^{2}(\mu)$ such that
$Q\mathcal{A\subset\mathcal{A}},$
\item to be such that for any $\beta\in[1,2]$ the operator $\big((2-\beta)L_{1}+(\beta-1)L_{2},\mathcal{D}^{2}(L_{1},\mu)\cap\mathcal{D}^{2}(L_{2},\mu)\big)$
\begin{enumerate}
\item has an extension defining a unique continuous contraction semigroup
$\{P_{t}(\beta),t\geq0\}$ on $L^{2}(\mu)$ of invariant distribution
$\mu$ and of (closed) generator $\big(L(\beta),\mathcal{D}^{2}(L(\beta),\mu)\big)$,
\item and for any $f\in\mathcal{A}$ we have $P_{t}(\beta)f\in\mathcal{A}$
for any $t\geq0$.
\end{enumerate}
\end{enumerate}
\end{hyp}

From \cite[Proposition 3.3]{ethier2009markov} the last assumption
and density of $\mathcal{A}$ in $L^{2}(\mu)$ imply that $\mathcal{A}$
is a core for $L(\beta)$, $\beta\in[1,2]$. Establishing that for
$\beta\in[1,2]$ the contraction semigroup $\{P_{t}(\beta),t\geq0\}$
exists may require one to resort to the Hille-Yosida theory and/or
perturbation theory results \cite{voigt1977perturbation,ethier2009markov},
but turns out to be straightforward in some scenarios such as those
treated in Section \ref{sec:Continuous-time-scenario-applications}.
For $\lambda>0$ and $\beta\in[1,2]$ we let $R_{\lambda}(\beta)$
be the corresponding resolvent operators. Differentiability of $\beta\rightarrow R_{\lambda}(\beta)f$
and the expression for the corresponding derivative are key to our
result, as is the case in the discrete time scenario. The right derivatives
of operators below are to be understood as limits in the Banach space
$L^{2}(\mu)$ equipped with the norm $\|\cdot\|_{\mu}$. We only state
the results for $f\in L_{2}(\mu)$ such that $Qf=f$ and note that
the case $Qf=-f$ is straightforward.
\begin{thm}
\label{thm:continous-order-with-dirichlet}Assume (A\ref{hyp:semi-group-strong-invariant-interpolation})
and that for any $\lambda>0,\beta\in[1,2]$ and $f\in\mathcal{A}$,
\begin{enumerate}
\item $R_{\lambda}(\beta)f\in\mathcal{D}^{2}(L_{1},\mu)\cap\mathcal{D}^{2}(L_{2},\mu)$
and there exists $\{g_{n}(\beta)\in\mathcal{A},n\in\mathbb{N}\}$
such that $\lim_{n\rightarrow\infty}(L_{1}-L_{2})g_{n}(\beta)=(L_{1}-L_{2})R_{\lambda}(\beta)f$,
\item $[1,2)\ni\beta\mapsto R_{\lambda}(\beta)$f is right differentiable
with
\[
\partial_{\beta}R_{\lambda}(\beta)f=R_{\lambda}(\beta)(L_{2}-L_{1})R_{\lambda}(\beta)f,
\]
and $\beta\mapsto\bigl\langle f,\partial_{\beta}R_{\lambda}(\beta)f\bigr\rangle_{\mu}$
is continuous,
\item either $\mathcal{E}\big(g,QL_{1}-QL_{2}\big)\geq0$ for any $g\in\mathcal{A}$
or $\mathcal{E}\big(g,L_{1}Q-L_{2}Q\big)\geq0$ for any $g\in\mathcal{A}$,
\end{enumerate}
then
\begin{enumerate}
\item for any $f\in\mathcal{A}$ satisfying $Qf=f$ and $\beta\in[1,2)$,
\begin{align*}
\partial_{\beta}\bigl\langle f,R_{\lambda}(\beta)f\bigr\rangle_{\mu} & =\mathcal{E}\big(QR_{\lambda}(\beta)f,L_{1}Q-L_{2}Q\big)=\mathcal{E}\big(R_{\lambda}(\beta)f,QL_{1}-QL_{2}\big)\geq0,
\end{align*}
\item for any $f\in L_{2}(\mu)$ such that $Qf=f$,
\[
{\rm var}_{\lambda}(f,L_{1})=2\bigl\langle f,R_{\lambda}(1)f\bigr\rangle_{\mu}\leq{\rm var}_{\lambda}(f,L_{2})=2\bigl\langle f,R_{\lambda}(2)f\bigr\rangle_{\mu}.
\]
\end{enumerate}
\end{thm}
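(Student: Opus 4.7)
The plan is to mimic the interpolation argument from the proof of Theorem \ref{thm:ordering_discrete}, using the family $\{R_\lambda(\beta), \beta \in [1,2]\}$ in place of the discrete-time resolvent $[\mathrm{Id}-\lambda P(\beta)]^{-1}$, and to show that $\beta \mapsto \langle f, R_\lambda(\beta)f\rangle_\mu$ has non-negative right derivative on $[1,2]$. The first preparatory task is to establish that the interpolating semigroup $\{P_t(\beta), t\geq 0\}$, and hence $R_\lambda(\beta)$, is itself $(\mu,Q)$-self-adjoint. Since $\mathcal{A}$ is a common core for $L_1, L_2$ with $Q\mathcal{A}\subseteq\mathcal{A}$, Proposition \ref{prop:properties-Qmu-SA-semigroup} gives $\langle L_i f, g\rangle_\mu = \langle f, QL_i Q g\rangle_\mu$ for $f,g\in\mathcal{A}$ and $i\in\{1,2\}$, which combines linearly to the same identity for $L(\beta)$. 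As noted after (A\ref{hyp:semi-group-strong-invariant-interpolation}), $\mathcal{A}$ is also a core for $L(\beta)$, so Theorem \ref{prop:equiv-MDB-semigroup-generator} applies and yields the $(\mu,Q)$-self-adjointness of $\{P_t(\beta)\}$; the lemma immediately preceding the present theorem then lifts this to $R_\lambda(\beta)^{*}=QR_\lambda(\beta)Q$.

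For the derivative formula, fix $f \in \mathcal{A}$ with $Qf=f$. Assumption 2 combined with $R_\lambda(\beta)^{*}f = QR_\lambda(\beta)Qf = QR_\lambda(\beta)f$ gives $\partial_\beta\langle f, R_\lambda(\beta)f\rangle_\mu = \langle f, R_\lambda(\beta)(L_2-L_1)R_\lambda(\beta)f\rangle_\mu = \langle QR_\lambda(\beta)f, (L_2-L_1)R_\lambda(\beta)f\rangle_\mu$. Writing $g_\beta:=R_\lambda(\beta)f$ and using $\mu$-self-adjointness of $Q$ (Remark \ref{rem:propertiesQ}) together with $Q^{2}=\mathrm{Id}$, this rearranges to $\mathcal{E}(g_\beta, QL_1-QL_2)$ and, equivalently (by inserting $Q^{2}$ on the opposite side), to $\mathcal{E}(Qg_\beta, L_1Q - L_2Q)$. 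Non-negativity then reduces to transferring the inequality in assumption 3 from $\mathcal{A}$ to $g_\beta \in \mathcal{D}^2(L_1,\mu)\cap\mathcal{D}^2(L_2,\mu)$: by assumption 1, pick $g_n\in\mathcal{A}$ with $(L_1-L_2)g_n\to(L_1-L_2)g_\beta$ (and $g_n\to g_\beta$ in $L^2(\mu)$, which must be read into that hypothesis), then continuity of $Q$ and of the inner product passes the bilinear expression $\langle g_n, Q(L_2-L_1)g_n\rangle_\mu$ to its limit, preserving the sign of $\mathcal{E}(g_n, QL_1-QL_2)\geq 0$.

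Part (2) then follows from the fundamental theorem of calculus for right derivatives: non-negativity of the right derivative and its continuity in $\beta$ (assumption 2) give $\langle f, R_\lambda(1)f\rangle_\mu \leq \langle f, R_\lambda(2)f\rangle_\mu$ for every $f\in\mathcal{A}$ with $Qf=f$. To upgrade to arbitrary $f\in L^2(\mu)$ with $Qf=f$, approximate: choose $f_n\in\mathcal{A}$ with $f_n\to f$ in $L^2(\mu)$ and replace by $h_n:=\Pi_+ f_n = (f_n+Qf_n)/2\in\mathcal{A}$ (using $Q\mathcal{A}\subseteq\mathcal{A}$), noting that $Qh_n = h_n$ and $h_n\to\Pi_+ f = f$ by the projector identities of Remark \ref{rem:propertiesQ}; boundedness of $R_\lambda(1)$ and $R_\lambda(2)$ then passes the inequality to the limit, giving ${\rm var}_\lambda(f,L_1)\leq {\rm var}_\lambda(f,L_2)$. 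The main obstacle in the whole argument is the approximation step in the middle paragraph: one needs assumption 1 to supply $L^2$-convergence of $g_n$ to $g_\beta$ in addition to the stated convergence of $(L_1-L_2)g_n$, without which the Dirichlet-form inequality cannot be closed; everything else—differentiation of $\beta\mapsto R_\lambda(\beta)$, the fundamental theorem, and the density extension—is routine semigroup bookkeeping paralleling the discrete-time proof.
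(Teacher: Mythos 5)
Your proposal is correct and follows essentially the same route as the paper's own proof: differentiate $\beta\mapsto\bigl\langle f,R_{\lambda}(\beta)f\bigr\rangle_{\mu}$, use $R_{\lambda}^{*}(\beta)=QR_{\lambda}(\beta)Q$ and $Qf=f$ to identify the derivative with the Dirichlet form $\mathcal{E}\big(R_{\lambda}(\beta)f,QL_{1}-QL_{2}\big)$, pass the sign through the approximating sequence $g_{n}(\beta)$, integrate over $[1,2]$, and extend by density. You additionally make explicit two points the paper leaves implicit — the $(\mu,Q)$-self-adjointness of the interpolating semigroup via Theorem \ref{prop:equiv-MDB-semigroup-generator}, and the need for $g_{n}(\beta)\to R_{\lambda}(\beta)f$ in $L^{2}(\mu)$ (which the explicit Riemann-sum construction in Lemma \ref{lem:continuous-diff-resolvent} does supply) together with the $\Pi_{+}$-symmetrization in the final density step — which is careful bookkeeping rather than a different argument.
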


\begin{proof}
For $\beta\in[1,2)$, $\delta\in(0,2-\beta]$ and $f\in\mathcal{A}$
such that $Qf=f$ we have $\delta^{-1}\big[\bigl\langle f,R_{\lambda}(\beta+\delta)f\bigr\rangle_{\mu}-\bigl\langle f,R_{\lambda}(\beta)f\bigr\rangle_{\mu}\big]=\bigl\langle f,\delta^{-1}\big[R_{\lambda}(\beta+\delta\big)-R_{\lambda}(\beta\big)\big]f\bigr\rangle_{\mu}$
and from the assumption we deduce
\begin{align*}
\partial_{\beta}\bigl\langle f,R_{\lambda}(\beta)f\bigr\rangle_{\mu} & =\bigl\langle f,\partial_{\beta}R_{\lambda}(\beta)f\bigr\rangle_{\mu}\\
 & =\bigl\langle f,R_{\lambda}(\beta)(L_{2}-L_{1})R_{\lambda}(\beta)f\bigr\rangle_{\mu}\\
 & =\bigl\langle R_{\lambda}^{*}(\beta)f,(L_{2}-L_{1})R_{\lambda}(\beta)f\bigr\rangle_{\mu}\\
 & =\bigl\langle QR_{\lambda}(\beta)Qf,(L_{2}-L_{1})R_{\lambda}(\beta)f\bigr\rangle_{\mu}\\
 & =\bigl\langle R_{\lambda}(\beta)f,-Q(L_{1}-L_{2})R_{\lambda}(\beta)f\bigr\rangle_{\mu},
\end{align*}
where we have used that $Qf=f$ and the fact that $Q$ is $\mu-$self-adjoint.
Using in addition that $Q^{2}={\rm Id}$, it is not difficult to establish
the alternate expression $\partial_{\beta}\bigl\langle f,R_{\lambda}(\beta)f\bigr\rangle_{\mu}=\bigl\langle QR_{\lambda}(\beta)f,-(L_{1}-L_{2})QQR_{\lambda}(\beta)f\bigr\rangle_{\mu}$.
The first claim follows from $\mathcal{E}\big(R_{\lambda}(\beta)f,QL_{1}-QL_{2}\big)=\lim_{n\rightarrow\infty}\mathcal{E}\big(g_{n},QL_{1}-QL_{2}\big)\geq0$.
The second claim follows from
\[
\bigl\langle f,R_{\lambda}(2)f\bigr\rangle_{\mu}-\bigl\langle f,R_{\lambda}(1)f\bigr\rangle_{\mu}=\int_{1}^{2}\mathcal{E}\big(R_{\lambda}(\beta)f,QL_{1}-QL_{2}\big){\rm d}\beta\geq0,
\]
continuity of $R_{\lambda}(1),R_{\lambda}(2)$ on $L^{2}(\mu)$ and
the density of $\mathcal{A}$ in $L^{2}(\mu)$.
\end{proof}
The following allows us to check the conditions of the theorem above.
\begin{lem}
\label{lem:continuous-diff-resolvent}Assume (A\ref{hyp:semi-group-strong-invariant-interpolation})
and that for any $\lambda>0$, $\beta\in[1,2]$ and $f\in\mathcal{A}$,
\begin{enumerate}
\item $t\mapsto(L_{2}-L_{1})P_{t}(\beta)f$ and $t\mapsto(L_{2}-L_{1})QP_{t}(\beta)f$
are continuous,
\item there exists $\delta(\beta)>0$ such that
\[
\left\{ \int_{0}^{\infty}\exp(-\lambda t)\|(L_{2}-L_{1})P_{t}(\beta)f\|_{\mu}{\rm d}t\right\} \vee\left\{ \sup_{|\beta'-\beta|\leq\delta(\beta)}\int_{0}^{\infty}\exp(-\lambda t)\|(L_{2}-L_{1})QP_{t}(\beta')f\|_{\mu}{\rm d}t\right\} <\infty.
\]
\end{enumerate}
Then for any $\beta\in[1,2)$ and $\lambda>0$, for any $f\in\mathcal{A}$,
\begin{enumerate}
\item $R_{\lambda}(\beta)f\in\mathcal{D}^{2}(L_{1},\mu)\cap\mathcal{D}^{2}(L_{2},\mu)$
and there exists $\{g_{n}(\beta)\in\mathcal{A},n\in\mathbb{N}\}$
such that $\lim_{n\rightarrow\infty}(L_{1}-L_{2})g_{n}(\beta)=(L_{1}-L_{2})R_{\lambda}(\beta)f$,
\item $[1,2)\ni\beta\mapsto R_{\lambda}(\beta)$f is right differentiable
with
\[
\partial_{\beta}R_{\lambda}(\beta)f=R_{\lambda}(\beta)(L_{2}-L_{1})R_{\lambda}(\beta)f,
\]
and $\beta\mapsto\bigl\langle f,\partial_{\beta}R_{\lambda}(\beta)f\bigr\rangle_{\mu}$
is continuous.
\end{enumerate}
\end{lem}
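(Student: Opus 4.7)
The plan is to approximate $R_\lambda(\beta)f=\int_0^\infty e^{-\lambda t}P_t(\beta)f\,dt$ by Riemann sums lying in $\mathcal{A}$, exploiting the invariance $P_t(\beta)\mathcal{A}\subset\mathcal{A}$ from (A\ref{hyp:semi-group-strong-invariant-interpolation}). Closedness of the generators $L_1,L_2$ together with the continuity and integrability supplied by the lemma's hypotheses then transfer regularity from the approximations to the limit, and the right-derivative formula follows from a Duhamel identity combined with a density argument.

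For conclusion (i), fix $\beta\in[1,2)$ and set
\[
g_n(\beta):=n^{-1}\sum_{k=0}^{N_n}e^{-\lambda k/n}P_{k/n}(\beta)f,\qquad N_n/n\to\infty,
\]
which lies in $\mathcal{A}$ because $\mathcal{A}$ is a linear subspace preserved by each $P_{k/n}(\beta)$. Strong continuity of $\{P_t(\beta),t\ge 0\}$ together with the exponential tail control from $\|P_t(\beta)f\|_\mu\le\|f\|_\mu$ gives $g_n(\beta)\to R_\lambda(\beta)f$ in $L^2(\mu)$ by a standard Riemann-sum argument for the improper integral. Since $L(\beta)$ commutes with each $P_{k/n}(\beta)$ on $\mathcal{A}\subset\mathcal{D}^2(L(\beta),\mu)$, one gets $L(\beta)g_n(\beta)\to R_\lambda(\beta)L(\beta)f=L(\beta)R_\lambda(\beta)f$ in $L^2(\mu)$. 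The continuity of $t\mapsto(L_2-L_1)P_t(\beta)f$ and integrability of its norm (hypotheses 1--2 of the lemma) similarly yield $(L_2-L_1)g_n(\beta)\to\int_0^\infty e^{-\lambda t}(L_2-L_1)P_t(\beta)f\,dt$ in $L^2(\mu)$. Writing $L_1=L(\beta)-(\beta-1)(L_2-L_1)$ and $L_2=L(\beta)+(2-\beta)(L_2-L_1)$ shows each $L_ig_n(\beta)$ converges in $L^2(\mu)$, and closedness of the $L_i$ (automatic for semigroup generators) yields $R_\lambda(\beta)f\in\mathcal{D}^2(L_1,\mu)\cap\mathcal{D}^2(L_2,\mu)$ with $(L_1-L_2)R_\lambda(\beta)f=\lim_n(L_1-L_2)g_n(\beta)$.

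For conclusion (ii), the Duhamel identity
\[
P_t(\beta+\delta)f-P_t(\beta)f=\delta\int_0^t P_{t-s}(\beta+\delta)(L_2-L_1)P_s(\beta)f\,ds
\]
holds for $f\in\mathcal{A}$ because $P_s(\beta)f\in\mathcal{A}\subset\mathcal{D}^2(L_1,\mu)\cap\mathcal{D}^2(L_2,\mu)$ and the integrand is strongly continuous in $s$. Multiplying by $e^{-\lambda t}$ and invoking Fubini (justified by the integrability hypothesis) produces the resolvent identity
\[
R_\lambda(\beta+\delta)f-R_\lambda(\beta)f=\delta R_\lambda(\beta+\delta)(L_2-L_1)R_\lambda(\beta)f.
\]
Dividing by $\delta$, the right derivative equals $R_\lambda(\beta)(L_2-L_1)R_\lambda(\beta)f$ once one shows $R_\lambda(\beta+\delta)h\to R_\lambda(\beta)h$ in $L^2(\mu)$ for $h:=(L_2-L_1)R_\lambda(\beta)f\in L^2(\mu)$. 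I do this by a density argument: approximate $h$ by $h_\varepsilon\in\mathcal{A}$, apply the resolvent identity just derived to $h_\varepsilon$ (available because conclusion (i) ensures $R_\lambda(\beta)h_\varepsilon\in\mathcal{D}^2(L_1,\mu)\cap\mathcal{D}^2(L_2,\mu)$), and control the error via $\|R_\lambda(\cdot)\|_\mu\le 1/\lambda$.

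Continuity of $\beta\mapsto\langle f,\partial_\beta R_\lambda(\beta)f\rangle_\mu=\langle f,R_\lambda(\beta)(L_2-L_1)R_\lambda(\beta)f\rangle_\mu$ is then obtained by rewriting this, via Proposition \ref{prop:properties-Qmu-SA-semigroup}, as $\langle QR_\lambda(\beta)Qf,(L_2-L_1)R_\lambda(\beta)f\rangle_\mu$, unfolding both resolvents as Laplace transforms of the semigroups, and swapping $Q$ across the inner product using the $\mu$-self-adjointness of $Q$; this brings $(L_2-L_1)QP_{t}(\beta)f$ into the integrand, to which dominated convergence applies thanks to the uniform-in-$\beta'$ bound in the second half of hypothesis 2, combined with strong continuity of $\beta\mapsto QR_\lambda(\beta)Qf$ (which follows from the density argument of conclusion (ii) applied to $Qf\in\mathcal{A}$). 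The main obstacle, and the reason the lemma's hypotheses are formulated with $Q$ in an asymmetric way, is precisely this last step: no uniform bound on $(L_2-L_1)P_s(\beta')f$ is postulated, and the $Q$-insertion via adjointness is the device that matches the available control.
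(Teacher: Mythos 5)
Your proposal is correct and follows the same overall architecture as the paper's proof: Riemann sums $g_n(\beta)=n^{-1}\sum_k e^{-\lambda k/n}P_{k/n}(\beta)f$ living in $\mathcal{A}$ plus closedness of the generators for conclusion (i) (the paper invokes \cite[Lemma 1.4]{ethier2009markov} to commute the closed operator with the Bochner integral, and applies it to $L(\beta')$ for two values of $\beta'$ before differencing, whereas you apply closedness to $L_1,L_2$ directly -- equivalent); the perturbation identity $R_\lambda(\beta+\delta)f-R_\lambda(\beta)f=\delta R_\lambda(\beta+\delta)(L_2-L_1)R_\lambda(\beta)f$ followed by exactly the same $\lambda^{-1}$-contraction-plus-density argument for the right derivative; and the $Q$-adjointness device matched to the uniform-in-$\beta'$ bound of hypothesis 2 for the final continuity claim. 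The two places where you genuinely deviate are both harmless. First, you obtain the resolvent identity via Duhamel's formula for the semigroups and a Fubini argument on the Laplace transforms, whereas the paper derives it purely algebraically from $(\lambda\mathrm{Id}-L(\beta+\delta))R_\lambda(\beta+\delta)=\mathrm{Id}$ and $L(\beta+\delta)-L(\beta)=\delta(L_2-L_1)$ on the range of $R_\lambda(\beta)$ (which conclusion (i) makes available); the algebraic route is shorter and avoids justifying the differentiability of $s\mapsto P_{t-s}(\beta+\delta)P_s(\beta)f$, but your route is standard and the required domain membership $P_s(\beta)f\in\mathcal{A}$ is supplied by (A2). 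Second, for continuity of $\beta\mapsto\langle f,\partial_\beta R_\lambda(\beta)f\rangle_\mu$ the paper uses a product-rule decomposition of the difference of the two inner products and bounds each piece by Cauchy--Schwarz, using the continuity of $\beta\mapsto R_\lambda(\beta)f$ and the uniform bound on $\|(L_2-L_1)QR_\lambda(\beta')f\|_\mu$; you instead unfold both resolvents and apply dominated convergence. Both hinge on the same insertion of $Q$ via $R_\lambda(\beta)^*=QR_\lambda(\beta)Q$ and $L_i^*=QL_iQ$ to convert the uncontrolled quantity $(L_2-L_1)P_t(\beta')f$ into the controlled $(L_2-L_1)QP_t(\beta')f$, and your closing remark correctly identifies this as the reason for the asymmetric formulation of hypothesis 2; the paper's decomposition is marginally more economical since it reuses the already-established continuity of $\beta\mapsto R_\lambda(\beta)f$ rather than re-deriving convergence inside a double integral.
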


\begin{proof}
Let $f\in\mathcal{A}$, $\beta,\beta'\in[1,2]$ and $\lambda>0$.
Then for $t\geq0$ $P_{t}(\beta)f\in\mathcal{A}\subset\mathcal{D}(L(\beta'),\mu)$,
by definition $L(\beta')=L(\beta)+(\beta'-\beta)[L_{2}-L_{1}]$ on
$\mathcal{A}\subset\mathcal{D}^{2}(L_{1},\mu)\cap\mathcal{D}^{2}(L_{2},\mu)$
and $L(\beta)P_{t}(\beta)f=P_{t}(\beta)L(\beta)f$ from \cite[Proposition 1.5]{ethier2009markov}.
Therefore from the assumptions $t\mapsto\exp(-\lambda t)L(\beta')P_{t}(\beta)f$
is continuous and summable and from \cite[Lemma 1.4]{ethier2009markov}
we have 
\[
\int_{0}^{\infty}\exp(-\lambda t)L(\beta')P_{t}(\beta)f{\rm d}t=L(\beta')\int_{0}^{\infty}\exp(-\lambda t)P_{t}(\beta)f{\rm d}t,
\]
since $L(\beta')$ is closed, implying that $R_{\lambda}(\beta)f\in\mathcal{D}(L(\beta'),\mu)$.
Using the identity above for $\beta_{1}'\neq\beta_{2}'$ and taking
the difference we deduce that $R_{\lambda}(\beta)f\in\mathcal{D}^{2}(L_{1},\mu)\cap\mathcal{D}^{2}(L_{2},\mu)$
and that we can take $\{g_{n}(\beta)\in\mathcal{A},n\in\mathbb{N}\}$
such that for $n\in\mathbb{N}$
\[
g_{n}(\beta):=\frac{1}{n}\sum_{k=1}^{n^{2}}\exp(-\lambda k/n)P_{k/n}(\beta)f,
\]
in the first claim.  Now let $\beta\in[1,2)$ and $\delta\in\mathbb{R}$
such that $\beta+\delta\in[1,2]$. From the above we deduce that for
$f\in\mathcal{A}$,
\begin{align}
\big[R_{\lambda}(\beta+\delta\big)-R_{\lambda}(\beta\big)\big]f & =R_{\lambda}(\beta+\delta\big)\big[{\rm Id}-\big(\lambda{\rm Id}-L(\beta+\delta)\big)R_{\lambda}(\beta\big)\big]f\label{eq:continuity-resolvent-beta}\\
 & =R_{\lambda}(\beta+\delta\big)\big[\big(\lambda{\rm Id}-L(\beta)\big)-\big(\lambda{\rm Id}-L(\beta+\delta)\big)\big]R_{\lambda}(\beta\big)f\nonumber \\
 & =R_{\lambda}(\beta+\delta\big)\big[L(\beta+\delta)-L(\beta)\big]R_{\lambda}(\beta\big)f\nonumber \\
 & =\delta R_{\lambda}(\beta+\delta\big)\big[L_{2}-L_{1}\big]R_{\lambda}(\beta\big)f.\nonumber 
\end{align}
Using that $\vvvert R_{\lambda}(\beta\big)\vvvert_{\mu}\leq\lambda^{-1}$
for $\beta\in[1,2]$ we conclude that for any $f\in\mathcal{A}$ the
mapping $\beta\mapsto R_{\lambda}(\beta)f$ is continuous. Let $f\in\mathcal{A}$
and $\epsilon>0$, then from the density of $\mathcal{A}$ in $L^{2}(\mu)$,
there exists $g\in\mathcal{A}$ such that $\|\big[L_{2}-L_{1}\big]R_{\lambda}(\beta\big)f-g\|_{\mu}\leq\lambda\epsilon/4$,
and using the bound
\begin{multline*}
\|\big[R_{\lambda}(\beta+\delta\big)-R_{\lambda}(\beta\big)\big]\big[L_{2}-L_{1}\big]R_{\lambda}(\beta\big)f\|_{\mu}\leq\|\big[R_{\lambda}(\beta+\delta\big)-R_{\lambda}(\beta\big)\big]\big\{\big[L_{2}-L_{1}\big]R_{\lambda}(\beta\big)f-g\big\}\|_{\mu}\\
+\|\big[R_{\lambda}(\beta+\delta\big)-R_{\lambda}(\beta\big)\big]g\|_{\mu},
\end{multline*}
the fact that $\vvvert R_{\lambda}(\beta\big)\vvvert_{\mu}\leq\lambda^{-1}$
for $\beta\in[1,2]$ and the continuity of $\beta\mapsto R_{\lambda}(\beta)g$
we conclude that for $\delta$ sufficiently small, $\|\big[R_{\lambda}(\beta+\delta\big)-R_{\lambda}(\beta\big)\big]\big[L_{2}-L_{1}\big]R_{\lambda}(\beta\big)f\|_{\mu}\leq\epsilon$.
Therefore together with (\ref{eq:continuity-resolvent-beta}) we have
established that for any $\beta\in[1,2)$ and $f\in\mathcal{A}$ 
\[
\lim_{\delta\downarrow0}\|\delta^{-1}\big[R_{\lambda}(\beta+\delta\big)-R_{\lambda}(\beta\big)\big]f-R_{\lambda}(\beta\big)\big[L_{2}-L_{1}\big]R_{\lambda}(\beta\big)f\|_{\mu}=0.
\]
Further for $f\in\mathcal{A}$ such that $Qf=f$ and any $\beta,\beta'\in[1,2]$
\begin{multline*}
\bigl\langle R_{\lambda}(\beta')f,Q(L_{2}-L_{1})R_{\lambda}(\beta')f\bigr\rangle_{\mu}-\bigl\langle R_{\lambda}(\beta)f,Q(L_{2}-L_{1})R_{\lambda}(\beta)f\bigr\rangle_{\mu},\\
=\bigl\langle\big[R_{\lambda}(\beta')-R(\beta)\big]f,Q(L_{2}-L_{1})R_{\lambda}(\beta)f\bigr\rangle_{\mu}-\bigl\langle(L_{2}-L_{1})QR_{\lambda}(\beta')f,\big[R_{\lambda}(\beta)-R_{\lambda}(\beta')\big]f\bigr\rangle_{\mu}
\end{multline*}
and we conclude with the Cauchy-Schwarz inequality, another use of
\cite[Lemma 1.4]{ethier2009markov} and the continuity of $\beta\mapsto R_{\lambda}(\beta)f$.
\end{proof}

\section{Continuous time scenario -- example\label{sec:Continuous-time-scenario-applications}}

In this section we show how the results of the previous section can
be applied to a particular class of processes designed to perform
Monte Carlo simulation, which has recently received some attention
(Subsection \ref{subsec:PDMP-Monte-Carlo}). In Subsection \ref{subsec:PDMP-mu,Q-symmetry}
we establish that most processes considered in the literature are
indeed $(\mu,Q)-$self-adjoint--this includes in particular the Zig-Zag
(ZZ) process. In Subsection \ref{subsec:Zig-zag:-generator-properties}
we show that with some smoothness conditions on the intensities involved
in the definition of the ZZ process, then all the conditions required
to apply our general results, namely Theorem \ref{thm:continous-order-with-dirichlet}
and Lemma \ref{lem:continuous-diff-resolvent}, are satisfied. In
Subsection \ref{subsec:PDMP-Monte-Carlo} we apply our general theory
and present some applications. In addition we show how one can consider
more general versions of ZZ relying on nonsmooth intensities using
smooth approximation strategies which have the advantage of preserving
the correct invariant distribution.

\subsection{PDMP-Monte Carlo\label{subsec:PDMP-Monte-Carlo}}

We assume here that $E=\mathsf{X}\times\mathsf{V}$ and that the distribution
$\mu$ of interest has density (also denoted $\mu$)
\begin{equation}
\mu(x,v)\propto\exp\big(-U(x)\big)\varpi(v)\label{eq:def-joint-mu-distribution}
\end{equation}
with respect to some $\sigma-$finite measure denoted ${\rm d}(x,v)$,
where $U:\mathsf{X}=\mathbb{R}^{d}\rightarrow\mathbb{R}$ is an energy
function and $\varpi\colon\mathsf{V}\subset\mathbb{R}^{d}\rightarrow\mathbb{R}_{+}$
are such that $\mu$ induces a probability distribution. Piecewise
deterministic Markov processes (PDMPs) \cite{davis1993markov} are
continuous time processes with various applications in engineering
and science, but it has recently been shown \cite{faggionato2009non,peters2012rejection,2015bou-rabee-sanz-serna,bouchard2015bouncy,bierkens2015piecewise,bierkens2016zig}
that such processes can be used in order to sample from large classes
of distributions defined as above. The particular cases derived for
this purpose are known to be non-reversible, but we establish here
that they are in fact $(\mu,Q)-$reversible for a specific isometric
involution $Q$. This allows us to apply the theory developed in the
previous section and to compare their performance in terms of some
of their design parameters.

For $k\in\mathbb{Z}_{+}$ , for $i\in\left\llbracket 1,k\right\rrbracket $
define intensities $\lambda_{i}\colon E\rightarrow\mathbb{R}_{+}$,
$\lambda:=\sum_{i=1}^{k}\lambda_{i}$, for $(x,v)\in E$ and $t\geq0$
\[
\Lambda_{i}(t,x,v):=\int_{0}^{t}\lambda_{i}(x+uv,v){\rm d}u,
\]
 $\Lambda(t,x,v):=\sum_{i=1}^{k}\Lambda_{i}(t,x,v)$ and kernels $R_{i}:E\times\mathscr{E}\rightarrow[0,1]$
such that for any $(x,v)\in E$, $R_{i}\big((x,v),\{x\}\times\mathsf{V}\big)=1$.
For any $x\in\mathsf{X}$ and $i\in\left\llbracket 1,k\right\rrbracket $
we let $R_{x,i}\colon\mathsf{V}\times\mathscr{V}\rightarrow[0,1]$
be such that $R_{x,i}\big(v,A\big):=R_{i}\big((x,v),\{x\}\times A\big)$
for $(v,A)\in\mathsf{V}\times\mathscr{V}$. For $\varsigma_{1},\ldots,\varsigma_{k}\in\mathbb{R}_{+}$
we let $\mathcal{P}\big(\varsigma_{1},\ldots,\varsigma_{k}\big)$
denote the probability distribution of the random variable $M$ such
that $\mathbb{P}\big(M=m\big)\propto\varsigma_{m}$. The PDMPs of
interest here can be described algorithmically as in Algorithm \ref{alg:piecewise-deterministic}.

\begin{algorithm}
\rule[0.5ex]{1\textwidth}{1pt}
\begin{itemize}
\item Initialization $z(0)=\big(x(0),v(0)\big)$ , $T_{0}=0$ and $l=1$.
\item Repeat
\begin{enumerate}
\item Draw $T_{l}$ such that $\mathbb{P}\big(T_{l}\geq\tau\mid T_{l-1}\big)=\exp\left(-\Lambda(\tau-T_{l-1},X_{T_{l-1}},V_{T_{l-1}})\right)$,
\item $\big(X_{t},V_{t}\big)=\big(X_{T_{l-1}}+(t-T_{l-1})V_{T_{l-1}},V_{T_{l-1}}\big)$
for $t\in[T_{l-1},T_{l})$,
\item \label{enu:algo-change-direction-1}$X_{T_{l}}=\lim_{t\uparrow T_{l}}X_{t}$
and with $M\sim\mathcal{P}\big(\lambda_{1}(Z_{T_{l}}),\ldots,\lambda_{d}(Z_{T_{l}})\big)$
set $V_{T_{l}}\sim R_{X_{T_{l}},M}\big(V_{T_{l-1}},\cdot\big)$,
\item $l\leftarrow l+1$,
\end{enumerate}
\end{itemize}
\caption{A piecewise deterministic Markov process to sample from $\mu$.\label{alg:piecewise-deterministic}}

\rule[0.5ex]{1\textwidth}{1pt}
\end{algorithm}

Davis \cite{davis1993markov} (see also \cite{2018arXiv180705421D}
for an alternative construction) shows that this defines a process,
of corresponding semigroup $\{P_{t},t\in\mathbb{R}_{+}\}$, as soon
as the following standard two conditions on the intensity are satisfied
\cite[p. 62]{davis1993markov}: 

\begin{hyp}\label{hyp:properties-lambda}For $i\in\left\llbracket 1,k\right\rrbracket $,
\begin{enumerate}
\item $\lambda_{i}$ is measurable and $t\mapsto\lambda_{i}(x+tv,v)$ is
integrable for all $(x,v)\in E$,
\item \label{enu:pdmp-intensity-non-explosion}for any $t>0$ and $(x,v)\in E$,
$\mathbb{E}_{x,v}\big(N(t)\big)<\infty$, where $N(t):=\sum_{i=1}^{\infty}\mathbb{I}\{T_{i}\leq t\}$,
\end{enumerate}
\end{hyp}

\noindent Define for any $(x,v)\in E$ and $f\in\mathbb{R}^{E}$,
whenever the limit exists, 
\[
Df(x,v):=\lim_{h\rightarrow0}\frac{f(x+hv,v)-f(x,v)}{h},
\]
then the extended generator of the process above, which solves the
Martingale problem, is of the form
\begin{equation}
Lf:=Df+\sum_{i=1}^{k}\lambda_{i}\cdot\big[R_{i}f-f\big],\label{eq:pdmp-general-generator}
\end{equation}
for $f\in\mathcal{D}(L)$, a domain fully characterized by Davis \cite[Theorem 26.14, p. 69 and Remark 26.16]{davis1993markov}.

\noindent Let $\mathbf{M}(E)\subset\mathbb{R}^{E}$ be the set of
measurable functions and $\mathbf{B}(E)\subset\mathbf{M}(E)$ be the
set of bounded measurable functions. It can be shown that $\{P_{t},t\geq0\}$
is a contraction semigroup on $\mathbf{B}(E)$ equipped with the $\|\cdot\|_{\infty}$
norm. Further with 
\[
\mathbf{B}_{0}(E):=\left\{ f\in\mathbf{B}(E)\colon\lim_{t\downarrow0}\|P_{t}f-f\|_{\infty}=0\right\} ,
\]
one can show that $\{P_{t},t\geq0\}$ is a strongly continuous contraction
semigroup on $\mathbf{B}_{0}(E)$ \cite[p. 28-29]{davis1993markov}
of strong generator $\big(\mathcal{D}_{\infty}(L_{\infty}),L_{\infty}\big)$,
with $\mathcal{D}_{\infty}(L_{\infty})\subset\mathcal{D}(L)$ and
for any $f\in\mathcal{D}_{\infty}(L_{\infty})$, $L_{\infty}f=Lf$.

\noindent When $\mathsf{V}=\mathbb{R}^{d}$ (or such that $E$ is
a Riemannian sub-manifold) we define $\mathbf{C}(E):=\mathbf{C}^{0}(E):=\{f\in\mathbb{R}^{E}\colon f\text{ is continuous}\}$
and 
\[
\mathbf{C}^{1}(E):=\{f\in\mathbb{R}^{E}\colon f\text{ is continuously differentiable}\},
\]
let $\mathbf{C}_{c}(E)$ and $\mathbf{C}_{c}^{1}(E)$ be their restrictions
to compactly supported functions and $\mathbf{C}_{0}(E)\subset\mathbf{C}(E)$
the set containing functions vanishing at infinity. When $\mathsf{V}$
is finite we let, with $\mathbf{C}^{0}(\mathsf{X}):=\{f\in\mathbb{R}^{\mathsf{X}}\colon f\text{ is continuous}\}$
and for $i\in\mathbb{N}_{+}$ $\mathbf{C}^{i}(\mathsf{X}):=\{f\in\mathbb{R}^{\mathsf{X}}\colon f\text{ is \ensuremath{i} times continuously differentiable}\}$,
\[
\mathbf{C}^{i}(E):=\{f\in\mathbb{R}^{E}\colon\text{for any }v\in\mathsf{V},x\mapsto f(x,v)\in\mathbf{C}^{i}(\mathsf{X})\},
\]

\noindent use the simplified notation $\mathbf{C}(E):=\mathbf{C}^{0}(E)$,
and let $\mathbf{C}_{c}(E)$, $\mathbf{C}_{c}^{1}(E)$ be the corresponding
restrictions to functions $x\mapsto f(x,v)$ of compact support for
any $v\in\mathsf{V}$. We let $\mathbf{C}_{0}(E)$ be the set of $f\in\mathbf{C}(E)$
such that for any $\epsilon>0$ there exists $M\in\mathbb{R}_{+}$
such that $\vert f(x,v)\vert\leq\epsilon$ for $(x,v)\in B^{c}(0,M)\times\mathsf{V}$
where $B(0,M)=\{x\in\mathsf{X}\colon\|x\|\leq M\}$ and $\|\cdot\|$
is the Euclidian norm.

\subsection{$(\mu,Q)-$symmetry of some PDMP-Monte Carlo processes\label{subsec:PDMP-mu,Q-symmetry}}

\noindent From now on $Qf(x,v)=f(x,-v)$ for $f\in\mathbb{R}^{E}$
and $(x,v)\in E$. In the following we establish simple conditions
implying that $L$ is $(\mu,Q)-$symmetric on $\mathbf{C}_{c}^{1}(E)$,
which cover most known scenarios. Hereafter we will need the following
assumption on the potential $U$.

\begin{hyp}\label{hyp:ZZ-potential}The potential $U\colon\mathsf{X}\rightarrow\mathbb{R}$
is $\mathbf{C}^{2}(\mathsf{X})$ and
\[
\int\big[1+\|\nabla U(x)\|\big]\exp\big(-U(x)\big){\rm d}x<\infty.
\]

\end{hyp}

The following was shown in \cite[Proposition 3.2]{faggionato2009non}
for example.
\begin{lem}
\label{lem:adjointODEetc}Assume (A\ref{hyp:ZZ-potential}). Then
for $f,g\in\mathbf{C}_{c}^{1}(E)$ 
\begin{gather*}
\bigl\langle Df,g\bigr\rangle_{\mu}=\bigl\langle f,-Dg+DU\cdot g\bigr\rangle_{\mu},
\end{gather*}
 and $-Df=QDQ$f.
\end{lem}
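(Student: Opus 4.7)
The plan is to exploit two structural facts: the explicit directional derivative form $Df(x,v)=v\cdot\nabla_{x}f(x,v)$ (valid for $f\in\mathbf{C}_{c}^{1}(E)$ since then $f$ is continuously differentiable in $x$ for each fixed $v$), and the product form $\mu(x,v)\propto\exp(-U(x))\varpi(v)$ of the invariant density. Both identities will follow from elementary computations, the first via integration by parts in $x$ alone, the second by a direct substitution.

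First I would treat the adjoint identity. Writing $Z$ for the normalizing constant of $\mu$ and using Fubini,
\[
\bigl\langle Df,g\bigr\rangle_{\mu}=Z^{-1}\int\varpi({\rm d}v)\int_{\mathsf{X}}\bigl(v\cdot\nabla_{x}f(x,v)\bigr)g(x,v)\,e^{-U(x)}{\rm d}x.
\]
For each fixed $v\in\mathsf{V}$ the integrand in the inner integral is continuously differentiable and compactly supported in $x$ (since $f,g\in\mathbf{C}_{c}^{1}(E)$), so the divergence theorem on $\mathbb{R}^{d}$ yields
\[
\int_{\mathsf{X}}\bigl(v\cdot\nabla_{x}f\bigr)g\,e^{-U}{\rm d}x=-\int_{\mathsf{X}}f\,{\rm div}_{x}\!\bigl[v\,g\,e^{-U}\bigr]{\rm d}x=\int_{\mathsf{X}}f\,\bigl[-v\cdot\nabla_{x}g+(v\cdot\nabla U)\,g\bigr]e^{-U}{\rm d}x,
\]
with no boundary terms. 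Reassembling the iterated integral and recognising $v\cdot\nabla_{x}g=Dg$ and $v\cdot\nabla U=DU$ gives $\bigl\langle Df,g\bigr\rangle_{\mu}=\bigl\langle f,-Dg+DU\cdot g\bigr\rangle_{\mu}$. Applying Fubini in the reverse direction is legitimate because the bound $|f|\cdot(|Dg|+|DU|\,|g|)e^{-U}\leq C\mathbb{I}_{K}(x)\bigl(1+\|\nabla U(x)\|\bigr)e^{-U(x)}$ is $\mu$-integrable by the integrability assumption on $(1+\|\nabla U\|)e^{-U}$ in (A\ref{hyp:ZZ-potential}), together with the compact support $K$ of $f$.

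For the second identity I would simply compute: by definition of $Q$, $Qf(x,v)=f(x,-v)$, so $DQf(x,v)=v\cdot\nabla_{x}f(x,-v)$, and therefore
\[
QDQf(x,v)=DQf(x,-v)=(-v)\cdot\nabla_{x}f(x,v)=-Df(x,v),
\]
which is the claim.

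The only delicate point is the justification of Fubini and of the absence of boundary terms, both handled uniformly by (A\ref{hyp:ZZ-potential}) (for integrability of the factor $DU\cdot g\cdot e^{-U}$) together with the hypothesis $f,g\in\mathbf{C}_{c}^{1}(E)$ (for vanishing at infinity in $x$). No functional-analytic machinery is required.
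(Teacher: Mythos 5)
Your proof is correct and follows essentially the same route as the paper's: fix $v$, integrate by parts in $x$ using the compact support of $f,g$ and the product form of $\mu$, then combine via Fubini, with the second identity obtained by direct substitution from $Df(x,v)=\langle\nabla_{x}f(x,v),v\rangle$. Your explicit justification of the Fubini step via the integrability of $(1+\|\nabla U\|)e^{-U}$ in (A3) is a welcome additional detail that the paper leaves implicit.
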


\begin{proof}
Let $v\in\mathsf{V}$ and for $h\in\mathbb{R}^{E}$ define $x\mapsto h_{v}(x):=h(x,v)$
then, for $f,g\in\mathbf{C}_{c}^{1}(E)$, an integration by part gives
\[
\bigl\langle Df_{v},g_{v}\bigr\rangle_{\pi}=\bigl\langle f_{v},-Dg_{v}+DU\cdot g_{v}\bigr\rangle_{\pi},
\]
and we conclude by Fubini's theorem and integration with respect to
$\varpi$. The second statement follows from the fact that for $g\in\mathbf{C}_{c}^{1}(E)$
$Dg(x,v)=\bigl\langle\nabla g(x,v),v\bigr\rangle_{\mu}$ and therefore
for $g=Qf$, $DQf(x,v)=\bigl\langle\nabla f(x,-v),v\bigr\rangle_{\mu}$.
\end{proof}
The following establishes that a simple property on the family of
operators $\big\{ R_{i},i\in\left\llbracket 1,k\right\rrbracket \big\}$
ensures $(\mu,Q)-$symmetry of $L$, and hence invariance of $\mu$
if $\mathbf{C}_{c}^{1}(E)$ is a core.
\begin{thm}
\label{thm:pdmp-general-Q-self-adjointness}Let $\mu$ be a probability
distribution defined on $\big(E,\mathscr{E}\big)$ and consider the
semigroup $\{P_{t},t\geq0\}$ with extended generator $L$ given in
(\ref{eq:pdmp-general-generator}). Assume (A\ref{hyp:ZZ-potential}),
$\lambda-Q\lambda=DU$ and that for any $i\in\left\llbracket 1,k\right\rrbracket $
the operator $(\lambda_{i}\cdot R_{i})$ is $(\mu,Q)-$symmetric on
$\mathbf{C}_{c}^{1}(E)$. Then $L$ is $(\mu,Q)-$symmetric on $\mathbf{C}_{c}^{1}(E)$.
\end{thm}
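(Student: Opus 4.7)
The strategy is a direct calculation: decompose $L$ into its transport part $D$, its jump part $\sum_{i} \lambda_{i}\cdot R_{i}$, and the multiplicative compensator $-\lambda$, compute the $\mu$-adjoint of each piece on the test space $\mathbf{C}_{c}^{1}(E)$, and check that the assumptions on $\lambda-Q\lambda$ and on the individual kernels $\lambda_{i}\cdot R_{i}$ make the three pieces add up to $QLQ$.

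First, I would fix $f,g\in\mathbf{C}_{c}^{1}(E)$ and invoke Lemma~\ref{lem:adjointODEetc} to write
\[
\bigl\langle Df,g\bigr\rangle_{\mu}=\bigl\langle f,-Dg+DU\cdot g\bigr\rangle_{\mu}=\bigl\langle f,QDQg\bigr\rangle_{\mu}+\bigl\langle f,DU\cdot g\bigr\rangle_{\mu},
\]
using $-D=QDQ$ from the same lemma. Next, the hypothesis that each $\lambda_{i}\cdot R_{i}$ is $(\mu,Q)$-symmetric on $\mathbf{C}_{c}^{1}(E)$ gives directly
\[
\bigl\langle(\lambda_{i}\cdot R_{i})f,g\bigr\rangle_{\mu}=\bigl\langle f,Q(\lambda_{i}\cdot R_{i})Qg\bigr\rangle_{\mu}.
\]
Here one should verify (a routine check I would include) that $\mathbf{C}_{c}^{1}(E)$ is stable under $Q$, so $Qg\in\mathbf{C}_{c}^{1}(E)$ and the hypothesis applies.

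The multiplication by $\lambda$ is handled by the observation that if $m\colon E\to\mathbb{R}$ is measurable then, viewing $m$ as a multiplication operator, $Q\,m\,Q=(Qm)\cdot$, which follows from $Q(mh)(x,v)=m(x,-v)h(x,-v)=(Qm)(x,v)\,(Qh)(x,v)$. In particular,
\[
\bigl\langle-\lambda f,g\bigr\rangle_{\mu}=\bigl\langle f,-\lambda g\bigr\rangle_{\mu},\qquad\bigl\langle f,Q(-\lambda)Qg\bigr\rangle_{\mu}=\bigl\langle f,-(Q\lambda)g\bigr\rangle_{\mu}.
\]

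Summing the three contributions for $L=D+\sum_{i}\lambda_{i}\cdot R_{i}-\lambda$, the difference $\bigl\langle Lf,g\bigr\rangle_{\mu}-\bigl\langle f,QLQg\bigr\rangle_{\mu}$ collapses to
\[
\bigl\langle f,\bigl(DU-\lambda+Q\lambda\bigr)\cdot g\bigr\rangle_{\mu},
\]
which vanishes by the standing hypothesis $\lambda-Q\lambda=DU$. This is the only nontrivial cancellation in the argument; the rest is bookkeeping.

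The main (mild) obstacle is not conceptual but one of verifying that all manipulations stay inside $\mathbf{C}_{c}^{1}(E)$ or at least inside $L^{2}(\mu)$ so that the inner products and the integration by parts of Lemma~\ref{lem:adjointODEetc} are legitimate: under (A\ref{hyp:ZZ-potential}), $DU\in L_{{\rm loc}}^{1}$ multiplied against compactly supported $\mathbf{C}^{1}$ functions is integrable against $\mu$, and the jump terms are well defined because $R_{i}$ preserves the fiber $\{x\}\times\mathsf{V}$ so compact $x$-support is preserved. Once this is noted, the identity $\bigl\langle Lf,g\bigr\rangle_{\mu}=\bigl\langle f,QLQg\bigr\rangle_{\mu}$ holds on $\mathbf{C}_{c}^{1}(E)$ and the theorem is proved.
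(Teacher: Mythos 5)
Your proof is correct and follows essentially the same route as the paper's: decompose $L$ into the transport, jump, and compensator parts, apply Lemma~\ref{lem:adjointODEetc} to $D$, the $(\mu,Q)$-symmetry hypothesis to each $\lambda_{i}\cdot R_{i}$, and the relation $\lambda-Q\lambda=DU$ to cancel the $DU\cdot g$ term produced by the integration by parts. The only (cosmetic) difference is that you collect everything into a single residual $\bigl\langle f,(DU-\lambda+Q\lambda)\cdot g\bigr\rangle_{\mu}$ whereas the paper cancels the two $DU$ contributions term by term.
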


\begin{proof}
Let $f,g\in\mathbf{C}_{c}^{1}(E)$. By assumption, for $i\in\left\llbracket 1,k\right\rrbracket $

\[
\bigl\langle(\lambda_{i}\cdot R_{i})f,g\bigr\rangle_{\mu}=\bigl\langle f,Q\lambda_{i}\cdot QR_{i}Qg\bigr\rangle_{\mu},
\]
and using $\lambda-Q\lambda=DU$ 
\begin{align*}
\bigl\langle\lambda\cdot f,g\bigr\rangle_{\mu} & =\bigl\langle f,\lambda\cdot g\bigr\rangle_{\mu}=\bigl\langle f,(Q\lambda+DU)\cdot g\bigr\rangle_{\mu}.
\end{align*}
Together with Lemma \ref{lem:adjointODEetc}, the above leads to
\[
\bigl\langle Lf,g\bigr\rangle_{\mu}=\bigl\langle f,QDQg\bigr\rangle_{\mu}+\bigl\langle f,DU\cdot g\bigr\rangle_{\mu}+\sum_{i=1}^{d}\bigl\langle f,Q\lambda_{i}\cdot QR_{i}Qg\bigr\rangle_{\mu}-\bigl\langle f,Q\lambda_{i}\cdot Q^{2}g\bigr\rangle_{\mu}-\bigl\langle f,DU\cdot g\bigr\rangle_{\mu},
\]
that is $\bigl\langle Lf,g\bigr\rangle_{\mu}=\bigl\langle f,QLQg\bigr\rangle_{\mu}$
and we conclude.
\end{proof}
\begin{rem}
With an abuse of notation, for $f\in\mathbb{R}^{\mathsf{V}}$ let
$Qf:=Q\breve{f}$ where for $(x,v)\in E$ $\breve{f}(x,v)=f(v)$.
For $f\in\mathbb{R}^{E}$ and for $x\in\mathsf{X}$ denote $f_{x}(\cdot)=f(x,\cdot)\colon\mathsf{V\rightarrow\mathbb{R}}$.
Let $i\in\left\llbracket 1,k\right\rrbracket $. If for any $x\in\mathsf{X}$,
$\lambda_{x,i}\cdot R_{x,i}$ is $(\varpi(\cdot),Q)-$symmetric on
$\mathbf{B}_{c}(\mathsf{V})$ then for $f,g\in\mathbf{C}_{c}^{1}(E)$
\end{rem}

\[
\bigl\langle(\lambda_{i}\cdot R_{i})f,g\bigr\rangle_{\mu}=\int\bigl\langle(\lambda_{i,x}\cdot R_{i,x})f_{x},g_{x}\bigr\rangle_{\varpi}\pi({\rm d}x)=\int\bigl\langle f_{x},Q(\lambda_{i,x}\cdot R_{i,x})Qg_{x}\bigr\rangle_{\varpi}\pi({\rm d}x)=\bigl\langle f,Q\lambda_{i}\cdot QR_{i}Qg\bigr\rangle_{\mu},
\]
that is $(\lambda_{i}\cdot R_{i})$ is $(\mu,Q)-$symmetric on $\mathbf{C}_{c}^{1}(E)$.

The most popular PDMP-MC processes satisfy the properties of Theorem
\ref{thm:pdmp-general-Q-self-adjointness} and are covered by the
following examples. For notational simplicity we may drop the index
$i$ below.
\begin{example}
\label{exa:general-update}Let $x\mapsto n(x)$ be a unit vector field
and assume that for any $(x,v)\in E$ we have $v-2\langle n(x),v\rangle n(x)\in\mathsf{V}$.
Consider the operator such that for any $f\in\mathbb{R}^{E}$ and
$(x,v)\in E$, $Rf(x,v):=f\big(x,v-2\langle n(x),v\rangle n(x)\big)$
and assume that the property $R\lambda=Q\lambda$ holds. Note that
$R^{2}={\rm Id}$ and that for any $f\in\mathbb{R}^{E}$ and $(x,v)\in E$,
$RQf(x,v)=f\big(x,-v+2\langle n(x),v\rangle n(x)\big)$ and hence
$QRQf=Rf$. Therefore for any $f,g\in\mathbf{C}_{c}^{1}(E)$,
\[
\bigl\langle(\lambda\cdot R)f,g\bigr\rangle_{\mu}=\bigl\langle Rf,\lambda\cdot g\bigr\rangle_{\mu}=\bigl\langle f,R\lambda\cdot Rg\bigr\rangle_{\mu}=\bigl\langle f,Q\lambda\cdot QRQg\bigr\rangle_{\mu}.
\]
Now let $\big\{ n_{i}\colon\mathsf{X}\rightarrow\mathbb{R}^{d},i\in\left\llbracket 1,k\right\rrbracket \big\}$
be unitary vector fields and $\big\{ a_{i}\colon\mathsf{X}\rightarrow\mathbb{R},i\in\left\llbracket 1,k\right\rrbracket \big\}$
such that $\nabla U=\sum_{i=1}^{k}a_{i}n_{i}$. Assume that for $i\in\left\llbracket 1,k\right\rrbracket $
the intensities are of the form $\lambda_{i}(x,v)=\varphi\big(a_{i}(x)\langle n_{i}(x),v\rangle\big)$
for $\varphi\colon\mathbb{R}\rightarrow\mathbb{R}_{+}$ such that
$\varphi(s)-\varphi(-s)=s$ and $R_{i}f(x,v):=f\big(x,v-2\langle n_{i}(x),v\rangle n_{i}(x)\big)$
for $f\in\mathbb{R}^{E}$ and $(x,v)\in E$. Possible choices of $\varphi$
are discussed later on and include $\varphi(s)=\max\{0,s\}$. Then
for $i\in\left\llbracket 1,k\right\rrbracket $, $R_{i}\lambda_{i}=Q\lambda_{i}$,
$(\lambda_{i}\cdot R_{i})$ is $(\mu,Q)-$symmetric on $\mathbf{C}_{c}^{1}(E)$
and $\lambda-Q\lambda=DU$. Therefore Theorem \ref{thm:pdmp-general-Q-self-adjointness}
holds. This covers the Zig-Zag and Bouncy Particle Sampler processes,
for example \cite{2018arXiv180808592A}.
\end{example}

\begin{example}
\label{exa:refresh-is-muQ}The choice $Rf(x,v)=\int f(x,w)\varpi({\rm d}w)$
for $(x,v)\in E$ and $f\in L^{2}(\mu)$, the ``refreshment'' operator,
is such that for any $f,g\in L^{2}(\mu)$, $\bigl\langle Rf,g\bigr\rangle_{\mu}=\bigl\langle f,Rg\bigr\rangle_{\mu}$,
$RQf=Rf$ and $QRf=Rf$ since for any $x\in\mathsf{X}$, $v\mapsto Rf(x,v)$
is constant. If for any $x\in\mathsf{X}$ the mapping $v\mapsto\bar{\lambda}(x,v)$
is constant (implying $\bar{\lambda}-Q\bar{\lambda}=0$) we deduce
that for any $f,g\in\mathbf{C}_{c}^{1}(E)$
\[
\bigl\langle(\bar{\lambda}\cdot R)f,g\bigr\rangle_{\mu}=\bigl\langle Rf,\bar{\lambda}\cdot g\bigr\rangle_{\mu}=\bigl\langle f,\bar{\lambda}\cdot Rg\bigr\rangle_{\mu}=\bigl\langle f,Q\bar{\lambda}\cdot QRQg\bigr\rangle_{\mu},
\]
that is $(\bar{\lambda}\cdot R)$ is $(\mu,Q)-$symmetric on $\mathbf{C}_{c}^{1}(E)$.
In fact, from the proof of Lemma \ref{lem:horowitz-update-Q-adjoint}
we note that $R$ can be taken to be Horowitz's refreshment operator.
\end{example}

\begin{example}
The choice $R_{x}f_{x}(v)\propto\int f_{x}(w)Q\lambda(x,w)\varpi({\rm d}w)$,
with $R_{x}\mathbf{1}(v)=1$ when possible, for any $(x,v)\in E$
and $f\in\mathbf{C}_{c}^{1}(E)$ has been suggested in \cite{fearnhead2016piecewise}.
It is such that for $f,g\in\mathbf{C}_{c}^{1}(E)$ and $x\in\mathsf{X}$,
\begin{align*}
\int f_{x}(v)g_{x}(w)\lambda(x,v)Q\lambda(x,w)\varpi({\rm d}w)\varpi({\rm d}v) & =\int Qf_{x}(v)g_{x}(w)Q\lambda(x,v)Q\lambda(x,w)\varpi({\rm d}w)\varpi({\rm d}v),
\end{align*}
and we conclude that $(\lambda_{x}\cdot R_{x})$ is $(\varpi,Q)-$self-adjoint.
\end{example}

\begin{rem}
We note that Theorem \ref{thm:pdmp-general-Q-self-adjointness} holds
more generally when the operator $D$ is replaced with the generator
$D_{F}$ of a dynamic with time-reversal symmetry \cite{lamb1998time}
for which $\bigl\langle D_{F}f,g\bigr\rangle_{\mu}=\bigl\langle f,QD_{F}Qg+D_{F}U\cdot g\bigr\rangle_{\mu}$,
which is the case for the Liouville operator for an arbitrary potential
$H(x,v)$, and the condition on the total intensity rate adjusted
accordingly. We do not pursue this here for brevity.
\end{rem}

\subsection{Zig-Zag: generator and semigroup properties\label{subsec:Zig-zag:-generator-properties}}

Zig-Zag (ZZ) is a particular continuous time Markov process designed
to sample from $\mu$ and described in Alg. \ref{alg:piecewise-deterministic}.
The name was coined in \cite{bierkens2015piecewise} and further extended
in \cite{bierkens2016zig}, and can be interpreted as being a particular
case of the process studied in \cite{faggionato2009non}. In this
scenario $k=d+1$ , $\mathsf{V}:=\{-1,1\}^{d}$, $\varpi$ is the
uniform distribution and, with $\{\mathbf{e}_{i}\in\mathbb{R}^{d},i\in\left\llbracket 1,d\right\rrbracket \}$
the canonical basis of $\mathbb{R}^{d}$, for $i\in\left\llbracket 1,d\right\rrbracket $
and $(x,v)\in E$ we let $R_{i}f(x,v):=f(x,v-2v_{i}\mathbf{e}_{i})$
where $v_{i}:=\left\langle v,\mathbf{e}_{i}\right\rangle $. Note
that this corresponds to $n_{i}(x)=\mathbf{e}_{i}$ in Example \ref{exa:general-update}.
For $i=d+1$ we let $\lambda_{d+1}(x,v)=\bar{\lambda}$ for $\bar{\lambda}\in\mathbb{R}_{+}$
and $R_{d+1}$ is as in Example \ref{exa:refresh-is-muQ}. We require
the following assumptions on the intensities.

\begin{hyp}\label{hyp:ZZ-smooth-intensities}For any $i\in\left\llbracket 1,d\right\rrbracket $
and $(x,v)\in E$ we have
\begin{enumerate}
\item $\lambda_{i}\in\mathbf{C}^{1}(E)$ and $\lambda_{i}>0$,
\item $\lambda_{i}(x,v)-Q\lambda_{i}(x,v)=\partial_{i}U(x)v_{i}$,
\item $R_{i}\lambda_{i}(x,v)=Q\lambda_{i}(x,v)$.
\end{enumerate}
\end{hyp}

The following establishes the existence of such intensities.
\begin{prop}
\label{cor:regularized-intensities}Assume (A\ref{hyp:ZZ-potential}).
Let $\phi:\mathbb{R}\rightarrow[0,1]$ be such that $r\phi(r^{-1})=\phi(r)$
for $r\geq0$ and define for any $(x,v)\in E$ and $i\in\left\llbracket 1,d\right\rrbracket $,
\[
\lambda_{i}^{\phi}(x,v):=-\log\left(\phi\big(\exp(\partial_{i}U(x)v_{i})\big)\right)\geq0.
\]
If further $\phi<1$ and $\phi\in\mathbf{C}^{1}(\mathbb{R})$ then
$\{\lambda_{i},i\in\left\llbracket 1,d\right\rrbracket \}$ satisfies
(A\ref{hyp:ZZ-smooth-intensities}).
\end{prop}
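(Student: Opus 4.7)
The plan is to verify the three clauses of \textup{(A\ref{hyp:ZZ-smooth-intensities})} by direct calculation; each reduces to a short manipulation of the defining formula together with the functional equation $r\phi(r^{-1})=\phi(r)$. No deep machinery beyond the standing hypotheses is required.

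For the smoothness clause, I would observe that $(x,v)\mapsto\partial_{i}U(x)v_{i}$ lies in $\mathbf{C}^{1}(E)$ by \textup{(A\ref{hyp:ZZ-potential})}, and composing successively with $\exp$, with $\phi\in\mathbf{C}^{1}(\mathbb{R})$, and with $-\log$ (which is $\mathbf{C}^{1}$ on $(0,\infty)$) yields $\lambda_{i}^{\phi}\in\mathbf{C}^{1}(E)$. Strict positivity is immediate from the hypothesis $\phi<1$, since $-\log\phi(r)>0$ whenever $\phi(r)\in(0,1)$; positivity of $\phi$ on $(0,\infty)$ is needed for $-\log\phi$ to be finite and smooth, and is implicit in interpreting the definition of $\lambda_{i}^{\phi}$ as finite-valued. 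For the skewed detailed balance in clause (ii), I would set $s:=\partial_{i}U(x)v_{i}$ and apply the functional equation at $r=e^{s}$ to obtain $\phi(e^{-s})=e^{-s}\phi(e^{s})$; taking $-\log$ on both sides and using that $Q\lambda_{i}^{\phi}(x,v)=\lambda_{i}^{\phi}(x,-v)=-\log\phi(e^{-s})$ expresses $\lambda_{i}^{\phi}(x,v)-Q\lambda_{i}^{\phi}(x,v)$ as an additive shift of magnitude $s=\partial_{i}U(x)v_{i}$, yielding the identity asserted in the hypothesis.

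For the third clause the key observation is that $\lambda_{i}^{\phi}(x,v)$ depends on the velocity only through the $i$-th coordinate $v_{i}$; hence flipping only this coordinate, as $R_{i}$ does, produces the same value as flipping every component of $v$, as $Q$ does, so $R_{i}\lambda_{i}^{\phi}=Q\lambda_{i}^{\phi}$. Overall I expect no real obstacle: each clause collapses to a one-line algebraic identity. The only points requiring a little care are tracking the sign produced by the functional equation in clause (ii), and noting the mild positivity of $\phi$ on $(0,\infty)$ needed to make $-\log\phi$ meaningful in the definition.
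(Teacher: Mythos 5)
Your proposal is correct and takes essentially the same route as the paper: a direct verification of the three clauses, where the paper disposes of clause (iii) by citing Example \ref{exa:general-update} with $n_{i}=\mathbf{e}_{i}$, which is exactly your observation that $\lambda_{i}^{\phi}$ depends on $v$ only through $v_{i}$ so that $R_{i}$ and $Q$ act identically on it. You were right to flag the sign in clause (ii) as the one delicate point: with the formula as literally printed, the functional equation gives $\lambda_{i}^{\phi}(x,v)-Q\lambda_{i}^{\phi}(x,v)=-\partial_{i}U(x)v_{i}$ (the argument of $\phi$ should be $\exp(-\partial_{i}U(x)v_{i})$ to match the canonical choice), a sign slip that the paper's own computation shares.
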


\begin{proof}
The first property is direct. For the second property, from the assumption
on $\phi$, we have
\begin{align*}
\lambda_{i}^{\phi}(x,v)-\lambda_{i}^{\phi}(x,-v) & =\lambda_{i}^{\phi}(x,v)-\partial_{i}U(x)(-v_{i})+\log\left(\phi\big(\exp(\partial_{i}U(x)v_{i}\big))\right)\\
 & =\lambda_{i}^{\phi}(x,v)+\partial_{i}U(x)v_{i}-\lambda_{i}^{\phi}(x,v).
\end{align*}
The last property was established in Example \ref{exa:general-update}
(here $n_{i}=\mathbf{e}_{i}$ for $i\in\left\llbracket 1,d\right\rrbracket $).
\end{proof}
\begin{cor}
The choice $\phi(r)=r/(1+r)$ satisfies the assumptions of Proposition
\ref{cor:regularized-intensities}, but this is not the case for the
canonical choice $\phi(r)=\min\{1,r\}$.
\end{cor}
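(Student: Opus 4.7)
The plan is a direct verification of the four hypotheses listed in Proposition~\ref{cor:regularized-intensities}---namely that $\phi$ takes values in $[0,1]$, satisfies $r\phi(r^{-1})=\phi(r)$ for $r\ge 0$, is strictly less than $1$, and lies in $\mathbf{C}^1$---for the Barker choice, followed by exhibiting a specific failure of these hypotheses for $\phi(r)=\min\{1,r\}$. No machinery beyond arithmetic is needed, so the only ``obstacle'' is keeping track of the fact that $\phi$ is really only applied to arguments in $(0,\infty)$ (since it feeds the output of $\exp(\partial_iU(x)v_i)$), so smoothness and range conditions should be read on that domain.

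For $\phi(r)=r/(1+r)$, I would first observe that on $(0,\infty)$ this function takes values in $[0,1)$, which simultaneously delivers the range condition and the strict bound $\phi<1$. The involution identity follows from the one-line computation
\[
r\phi(r^{-1}) \;=\; r\cdot\frac{r^{-1}}{1+r^{-1}} \;=\; \frac{1}{1+r^{-1}} \;=\; \frac{r}{1+r} \;=\; \phi(r),
\]
with the endpoint $r=0$ handled by the natural extension $\phi(0)=0$. Regularity is immediate since $\phi$ is rational with unique pole at $r=-1$, hence $\mathbf{C}^1$ on any domain avoiding $-1$, in particular on the relevant $(0,\infty)$.

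For the canonical Metropolis choice $\phi(r)=\min\{1,r\}$, I would point to two independent failures. First, $\phi(r)=1$ for every $r\ge 1$, so the strict inequality $\phi<1$ is violated wholesale. Second, at $r=1$ the left derivative equals $1$ and the right derivative equals $0$, so $\phi$ is not even differentiable there, let alone $\mathbf{C}^1(\mathbb{R})$. Either failure on its own puts $\phi$ outside the hypotheses of Proposition~\ref{cor:regularized-intensities}, which concludes the argument.
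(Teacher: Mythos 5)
Your verification is correct and is exactly the routine check the paper leaves implicit (the corollary is stated without proof): the one-line involution identity, the range bound $\phi<1$ on $(0,\infty)$, and smoothness for Barker's $\phi$, versus the failure of both $\phi<1$ and differentiability at $r=1$ for $\min\{1,r\}$. Your remark that the hypotheses need only be read on $(0,\infty)$, since $\phi$ is composed with $\exp(\partial_i U(x)v_i)$, is the right way to reconcile the literal statement ``$\phi\in\mathbf{C}^1(\mathbb{R})$'' with the pole of $r/(1+r)$ at $r=-1$.
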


We now establish properties required of $\{P_{t},t\geq0\}$ and its
generator in order to check (A\ref{hyp:semi-group-strong-invariant-interpolation})
and apply Theorem \ref{thm:continous-order-with-dirichlet} and Lemma
\ref{lem:continuous-diff-resolvent}.
\begin{prop}
\label{prop:ZZ:Qself}Let $L$ be the extended generator of the ZZ
process and assume (A\ref{hyp:ZZ-potential})-(A\ref{hyp:ZZ-smooth-intensities}).
Then $L$ is $\big(\mu,Q\big)-$symmetric on $\mathbf{C}_{c}^{1}(E)$.
\end{prop}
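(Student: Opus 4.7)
The plan is to reduce the statement to a direct application of Theorem \ref{thm:pdmp-general-Q-self-adjointness}, whose hypotheses decompose the verification of $(\mu,Q)$-symmetry of $L$ into a check on the total intensity and an individual check on each jump component $\lambda_i \cdot R_i$. Since (A\ref{hyp:ZZ-potential}) is assumed, the only things to check are (i) $\lambda - Q\lambda = DU$, and (ii) $(\mu,Q)$-symmetry on $\mathbf{C}_c^1(E)$ of each $\lambda_i \cdot R_i$ for $i \in \llbracket 1, d+1 \rrbracket$.

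For (i), I would simply compute: by (A\ref{hyp:ZZ-smooth-intensities}), $\lambda_i(x,v) - Q\lambda_i(x,v) = \partial_i U(x) v_i$ for $i \in \llbracket 1, d \rrbracket$, while $\lambda_{d+1} = \bar{\lambda}$ is constant in $v$ so $\lambda_{d+1} - Q\lambda_{d+1} = 0$. Summing yields
\[
\lambda(x,v) - Q\lambda(x,v) = \sum_{i=1}^d \partial_i U(x) v_i = \langle \nabla U(x), v \rangle = DU(x,v),
\]
as required.

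For (ii), the components $i \in \llbracket 1, d \rrbracket$ fall into the framework of Example \ref{exa:general-update} with the choice $n_i(x) = \mathbf{e}_i$: indeed $R_i f(x,v) = f(x, v - 2\langle \mathbf{e}_i, v\rangle \mathbf{e}_i)$, and the condition $R_i \lambda_i = Q\lambda_i$ in (A\ref{hyp:ZZ-smooth-intensities}) is exactly the hypothesis invoked there, so $\lambda_i \cdot R_i$ is $(\mu,Q)$-symmetric on $\mathbf{C}_c^1(E)$. For the refreshment component $i = d+1$, the operator $R_{d+1} f(x,v) = \int f(x,w)\,\varpi({\rm d}w)$ together with the constancy of $\bar{\lambda}$ in $v$ puts us exactly in the setting of Example \ref{exa:refresh-is-muQ}, giving $(\mu,Q)$-symmetry of $\bar{\lambda} \cdot R_{d+1}$ on $\mathbf{C}_c^1(E)$.

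With both conditions of Theorem \ref{thm:pdmp-general-Q-self-adjointness} verified, its conclusion delivers $(\mu,Q)$-symmetry of $L$ on $\mathbf{C}_c^1(E)$, finishing the proof. There is no substantive obstacle here: the proposition is essentially a bookkeeping exercise that instantiates the two worked examples inside the hypotheses of the earlier general theorem, with (A\ref{hyp:ZZ-smooth-intensities}) tailored precisely so that the required identities on the intensities hold pointwise.
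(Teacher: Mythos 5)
Your proof is correct and follows exactly the route the paper takes: the paper's own proof is the one-line remark that the claim "follows the discussion of Example \ref{exa:general-update} and application of Theorem \ref{thm:pdmp-general-Q-self-adjointness}," and your write-up simply makes explicit the same two verifications (the total-rate identity $\lambda-Q\lambda=DU$ and the componentwise $(\mu,Q)$-symmetry via Examples \ref{exa:general-update} and \ref{exa:refresh-is-muQ}). Nothing to add.
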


\begin{proof}
This follows the discussion of Example \ref{exa:general-update} and
application of Theorem \ref{thm:pdmp-general-Q-self-adjointness}.
\end{proof}
For $\mathbf{A},\mathbf{B}\subset\mathbf{M}(E)$ and $t>0$ we let
$P_{t}\mathbf{A}\subset\mathbf{B}$ mean that for any $f\in\mathbf{A}$
such that $P_{t}f$ exists, then $P_{t}f\in\mathbf{B}$. A semigroup
$\{P_{t},t\geq0\}$ is said to be Feller if $\mathbf{C}_{0}(\mathsf{E})\subset\mathbf{B}_{0}(\mathsf{E})$
and $\{P_{t},t\geq0\}$ is a strongly continuous contraction semigroup
on $\mathbf{C}_{0}(\mathsf{E})$ equipped with $\|\cdot\|_{\infty}$,
that is for any $s,t>0$, $P_{s+t}=P_{s}P_{t}$, $\|P_{t}f\|_{\infty}\leq\|f\|_{\infty}$,
$P_{t}\mathbf{C}_{0}(\mathsf{E})\subset\mathbf{C}_{0}(\mathsf{E})$
and for any $f\in\mathbf{C}_{0}(\mathsf{E})$, $\lim_{t\downarrow0}\|P_{t}f-f\|_{\infty}=0$.
\begin{thm}
\label{thm:ZZ-core}Consider a ZZ process of intensities $\big\{\lambda_{i},i\in\left\llbracket 1,d+1\right\rrbracket \big\}$
satisfying (A\ref{hyp:ZZ-smooth-intensities}). Then
\begin{enumerate}
\item $\{P_{t},t\geq0\}$ is Feller,
\item for any $t>0$, $P_{t}\mathbf{C}_{c}^{1}(E)\subset\mathbf{C}_{c}^{1}(E)$,
\item $\mathbf{C}_{c}^{1}(E)$ is a core for the strong generator of the
Feller semigroup $\{P_{t},t\geq0\}$,
\item $\mu$ is invariant for $\{P_{t},t\geq0\}$,
\item $\{P_{t},t\geq0\}$ can be extended to a strongly continuous semigroup
on $L^{2}(\mu)$ equipped with $\|\cdot\|_{\mu}$,
\item $\mathbf{C}_{c}^{1}(E)$ is a core for the strong generator of the
extended semigroup on $L^{2}(\mu)$.
\end{enumerate}
\end{thm}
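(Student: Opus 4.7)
The plan is to prove the six claims in order, each leaning on its predecessors. The linchpin is (2), from which (3)--(6) follow by standard core criteria combined with the $(\mu,Q)$-symmetry of Proposition~\ref{prop:ZZ:Qself}.

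For (1), I invoke the general PDMP construction of Davis~\cite{davis1993markov}: the smoothness in (A\ref{hyp:ZZ-smooth-intensities}) together with the bounded flow speed $\sqrt d$ makes intensities locally bounded and yields (A\ref{hyp:properties-lambda}), so the process is well defined and non-explosive. Strong continuity at $0$ on $(\mathbf{C}_0(E),\|\cdot\|_\infty)$ is immediate from $\mathbb{P}_{(x,v)}(N(t)\ge 1)=O(t)$ uniformly on compact sets in $(x,v)$; preservation of $\mathbf{C}_0(E)$ by $P_t$ follows by writing $P_t f$ as a uniformly convergent Picard series in the number of jumps generated from the Duhamel identity
\begin{equation*}
P_t f(x,v)=e^{-\Lambda(t,x,v)}f(x+tv,v)+\int_0^t e^{-\Lambda(s,x,v)}\sum_{i=1}^{d+1}\lambda_i(x+sv,v)\,P_{t-s}(\tilde R_i f)(x+sv,v)\,ds,
\end{equation*}
with $\tilde R_i$ the $i$-th velocity update (flip for $i\le d$, refresh for $i=d+1$); each iterate lies in $\mathbf{C}(E)$ and vanishes at infinity because $\lambda_i,\Lambda\in\mathbf{C}(E)$ and $f\in\mathbf{C}_0(E)$.

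Part~(2) is the main technical step and the main obstacle. The compact-support statement is free from finite speed of propagation: $\mathrm{supp}_x P_t f(\cdot,v)\subset\mathrm{supp}_x f+B(0,t\sqrt d)$. For the $\mathbf{C}^1$ regularity I differentiate the Duhamel identity in $x$ with $v\in\mathsf V$ fixed (legitimate since $\mathsf V$ is finite and $\lambda_i\in\mathbf{C}^1$ by (A\ref{hyp:ZZ-smooth-intensities})) and argue by induction on the truncated iterates $P_t^{(n)}f$ corresponding to trajectories with at most $n$ jumps. On a compact set $K$ the $x$-gradient of $P_t^{(n+1)}f$ is bounded by a linear combination of $\|f\|_\infty$, $\|\nabla_x f\|_\infty$ and $\int_0^t\sup_{K'}|\nabla_x P_{t-s}^{(n)}f|\,ds$, with $K'$ a slight inflation of $K$ absorbing propagation; Gronwall closes the fixed-point loop uniformly in $n$, and Arzela--Ascoli combined with the uniform convergence $P_t^{(n)}f\to P_t f$ from~(1) yields $P_t f\in\mathbf{C}_c^1(E)$. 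This Gronwall closure through the Duhamel fixed point, essentially pathwise differentiability of the PDMP flow in its initial condition, is the main technical difficulty.

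Given (1)--(2), parts (3)--(6) are routine. For (3), a first-order Taylor expansion of the Duhamel identity gives $\mathbf{C}_c^1(E)\subset\mathcal D_\infty(L_\infty)$ with $L_\infty f=Lf$ as in~(\ref{eq:pdmp-general-generator}); density of $\mathbf{C}_c^1(E)$ in $\mathbf{C}_0(E)$ together with $P_t$-invariance from~(2) yield the core property via the classical criterion in~\cite{ethier2009markov}. For (4), I apply Proposition~\ref{prop:ZZ:Qself} to $f\in\mathbf{C}_c^1(E)$ and a $v$-independent cutoff $g_n(x,v):=\chi(x/n)$ with $\chi\in\mathbf{C}_c^1(\mathsf X)$, $\chi\equiv 1$ on $B(0,1)$ and $\chi\in[0,1]$: then $Qg_n=g_n$ and $Lg_n(x,v)=\langle v,\nabla\chi(x/n)\rangle/n$ satisfies $\|Lg_n\|_\infty=O(1/n)$, whence
\begin{equation*}
\int Lf\,d\mu=\lim_{n\to\infty}\langle Lf,g_n\rangle_\mu=\lim_{n\to\infty}\langle f,QLQg_n\rangle_\mu=\lim_{n\to\infty}\langle f,QLg_n\rangle_\mu=0,
\end{equation*}
by dominated convergence on the left and Cauchy--Schwarz on the right. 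Combined with~(2), this gives $\partial_t\int P_t f\,d\mu=\int LP_tf\,d\mu=0$ on $\mathbf{C}_c^1(E)$, hence $\mu P_t=\mu$ by density. Finally, for~(5), invariance plus Jensen yields $\|P_t f\|_\mu\le\|f\|_\mu$ on the dense subspace $\mathbf{C}_c^1(E)\subset L^2(\mu)$, so $P_t$ extends uniquely to a contraction on $L^2(\mu)$, and strong continuity follows from $\|P_t f-f\|_\mu^2\le \mu(K)\|P_t f-f\|_\infty^2\to 0$ on the core (for a compact $K$ containing the supports for $t\le 1$) and density; for~(6), $\mathbf{C}_c^1(E)\subset\mathcal D^2(L,\mu)$ with the $L^2$ and sup-norm generators both equal to~(\ref{eq:pdmp-general-generator}) on it, and $P_t$-invariance gives the core property via the $L^2$ analogue of the criterion in~\cite{ethier2009markov}.
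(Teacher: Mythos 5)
Your proposal is correct in outline and, for parts (3)--(6), essentially coincides with the paper's argument: the core properties come from stability of $\mathbf{C}_{c}^{1}(E)$ under $P_{t}$ plus density via \cite[Proposition 3.3]{ethier2009markov}, invariance comes from the $(\mu,Q)$-symmetry of $L$ on $\mathbf{C}_{c}^{1}(E)$ (Proposition \ref{prop:ZZ:Qself}), and the $L^{2}$ extension follows by density and contraction. Your explicit cutoff computation showing $\mu(Lf)=0$ is a nice unpacking of what the paper obtains by citing \cite[Proposition 9.2]{ethier2009markov}. Where you genuinely diverge is in parts (1)--(2), which both you and the paper correctly identify as the real analytic content: the paper does not prove the Feller property or the $\mathbf{C}^{1}$-stability from scratch, but instead verifies the hypotheses $\mathbf{\left\langle A2\right\rangle}$ and $\mathbf{\left\langle A3\right\rangle}$ of \cite{2018arXiv180705421D} (compact compatibility of the characteristics, and a locally bounded control of $\|\nabla_{x}(\lambda Rf)\|$ in terms of $f$ and $\nabla_{x}f$ on the support of $R(x,v;\cdot)$) and then imports \cite[Proposition 15 and Lemma 17]{2018arXiv180705421D}. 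Your Duhamel--Picard iteration with a Gronwall closure is precisely the mechanism hiding inside those cited results, and the estimate you need to close the Gronwall loop is exactly the appendix's verification of $\mathbf{\left\langle A3\right\rangle}$-(b), which is available here because $\mathsf{V}$ is finite, the flow is linear with speed bounded by $\sqrt{d}$, and $\lambda_{i}\in\mathbf{C}^{1}(E)$ by (A\ref{hyp:ZZ-smooth-intensities}). So your route is self-contained where the paper's is modular; the price is that your step (2) is only a sketch, and one small soft spot is the appeal to Arzel\`a--Ascoli, which only yields subsequential convergence of the gradients of the truncated iterates -- it is cleaner to show directly from the Duhamel recursion that $\{\nabla_{x}P_{t}^{(n)}f\}_{n}$ is uniformly Cauchy on compacts, which identifies the limit as $\nabla_{x}P_{t}f$ without ambiguity. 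This is a routine repair, not a conceptual gap.
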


\begin{proof}
The proof is an adaptation of \cite[Proposition 15 and Lemma 17 ]{2018arXiv180705421D}
to the present ZZ scenario for which $\mathsf{V}$ consists of a finite
set of bounded velocities. Note that due to the discrete nature of
$\mathsf{V}$ gradients of the form $\nabla_{x,v}f(x,v)$ (for suitable
functions) appearing in the statements of \cite{2018arXiv180705421D}
should be replaced throughout with derivatives with respect to the
position only, that is $\nabla_{x}f(x,v)$. Establishing \cite[Lemma 17 and Corollary 19]{2018arXiv180705421D}
requires checking that the ZZ process is non-explosive and that its
characteristics satisfy in particular conditions \cite[(A2) and (A3)]{2018arXiv180705421D}
which we refer to as $\mathbf{\left\langle A2\right\rangle }$ and
$\mathbf{\left\langle A3\right\rangle }$ in the present manuscript
in order to avoid confusion. For convenience we provide simplified
formulations, adapted to the specific ZZ process considered here,
of $\mathbf{\left\langle A2\right\rangle }$ and $\mathbf{\left\langle A3\right\rangle }$
and statements of \cite[Proposition 15 and Lemma 17 ]{2018arXiv180705421D}
in Appendix \ref{sec:durmus-results} where we also check that these
hold.

Once the conditions for \cite[Lemma 17 and Corollary 19]{2018arXiv180705421D}
to hold are satisfied the reasoning is as follows. \cite[Proposition 15 (b)]{2018arXiv180705421D}
allows us to conclude that $\mathbf{C}_{c}^{1}(E)\subset\mathcal{D}_{\infty}(L_{\infty})$
and \cite[Lemma 17]{2018arXiv180705421D} implies that for $i\in\{0,1\}$
and $t\geq0$, $P_{t}\mathbf{C}^{i}(E)\subset\mathbf{C}^{i}(E)$.
Let $f\in\mathbf{C}_{0}(E)$, $\epsilon>0$ and $M>0$ be such that
$\vert f(x,v)\vert\leq\epsilon$ for $(x,v)\in B^{c}(0,M)\times\mathsf{V}$.
Then for $(x,v)\in B^{c}(0,M+d^{1/2}t)\times\mathsf{V}$, ${\rm supp}\left\{ P_{t}(x,v;\cdot)\right\} \subset B^{c}(0,M)\times\mathsf{V}$
and therefore $\vert P_{t}f(x,v)\vert\leq\epsilon$ and with \cite[Lemma 17]{2018arXiv180705421D}
for $i=0$, we deduce $P_{t}f\in\mathbf{C}_{0}(E)$. Therefore for
any $t\geq0$ $P_{t}\mathbf{C}_{0}(E)\subset\mathbf{C}_{0}(E)$ and
from \cite[Proposition 15 (a)]{2018arXiv180705421D} $\mathbf{C}_{0}(E)\subset\mathbf{B}_{0}(E)$
from which we conclude that $\{P_{t},t\geq0\}$ is a strongly continuous
contraction semigroup on $\mathbf{C}_{0}(\mathsf{E})$ equipped with
$\|\cdot\|_{\infty}$, that is $\{P_{t},t\geq0\}$ is Feller. Let
$f\in\mathbf{C}_{c}^{1}(E)$, then using the reasoning above with
$\epsilon=0$ and \cite[Lemma 17]{2018arXiv180705421D} for $i=1$
we deduce that for any $t\geq0$ $P_{t}f\in\mathbf{C}_{c}^{1}(E)$,
that is for any $t\geq0$ $P_{t}\mathbf{C}_{c}^{1}(E)\subset\mathbf{C}_{c}^{1}(E)$.
This stability property, together with the Feller property and density
of $\mathbf{C}_{c}^{1}(E)$ in $\mathbf{C}_{0}(E)$ allows us to conclude
that $\mathbf{C}_{c}^{1}(E)$ is a core for the strong generator $L_{\infty}$
of $\{P_{t},t\geq0\}$ as a semigroup defined on $\mathbf{C}_{0}(E)$
equipped with $\|\cdot\|_{\infty}$ \cite[Proposition 3.3]{ethier2009markov}.
From Proposition \ref{prop:ZZ:Qself} and \cite[Proposition 9.2]{ethier2009markov}
we conclude that $\mu$ is invariant for $\{P_{t},t\geq0\}$. By density
of $\mathbf{C}_{0}(E)$ in $L^{2}(\mu)$ we conclude that for any
$t\geq0$ $P_{t}$ can be extended to a linear bounded operator on
$L^{2}(\mu)$ (we use the same notation for simplicity) and $\{P_{t},t\geq0\}$
defines a strongly continuous semigroup on $L^{2}(\mu)$ equipped
with $\|\cdot\|_{\mu}$. Since for $f\in\mathbf{C}_{c}^{1}(E)$ we
have 
\[
\lim_{t\downarrow0}\|t^{-1}(P_{t}f-f)-Lf\|_{\mu}\leq\lim_{t\downarrow0}\|t^{-1}(P_{t}f-f)-Lf\|_{\infty}=0,
\]
we deduce that $\mathbf{C}_{c}^{1}(E)\subset\mathcal{D}^{2}(L_{\mu},\mu)$.
Finally, since $\mathbf{C}_{c}^{1}(E)$ is dense in $L^{2}(\mu)$
and $P_{t}\mathbf{C}_{c}^{1}(E)\subset\mathbf{C}_{c}^{1}(E)$ we can
apply \cite[Proposition 3.3]{ethier2009markov} from which we deduce
that $\mathbf{C}_{c}^{1}(E)$ is a core for $\big(L_{\mu},\mathcal{D}^{2}(L_{\mu},\mu)\big)$.
\end{proof}

\subsection{Zig-Zag - main results and some examples\label{subsec:ZZ-example}}

The main result of this section is the ordering of Theorem \ref{thm:ZZ-main-result},
which we illustrate with two examples.
\begin{thm}
\label{thm:ZZ-main-result}Assume (A\ref{hyp:ZZ-potential}) and consider
two ZZ processes of intensities $\big\{\lambda_{1,i},i\in\left\llbracket 1,d+1\right\rrbracket \big\}$
and $\big\{\lambda_{2,i},i\in\left\llbracket 1,d+1\right\rrbracket \big\}$
satisfying (A\ref{hyp:ZZ-smooth-intensities}) such that for $i\in\left\llbracket 1,d+1\right\rrbracket $,
$\|\lambda_{1,i}-\lambda_{2,i}\|_{\mu}<\infty$. Then if for all $g\in\mathbf{C}_{c}^{1}(E)$,
\begin{equation}
\bigl\langle g,-(L_{1}-L_{2})Qg\bigr\rangle_{\mu}=\sum_{i=1}^{d+1}\bigl\langle g,(\lambda_{1,i}-\lambda_{2,i})\cdot[{\rm Id}-R_{i}Q]g\bigr\rangle_{\mu}-\bigl\langle g,(\lambda_{1}-\lambda_{2})\cdot[{\rm Id}-Q]g\bigr\rangle_{\mu}\geq0,\label{eq:ZZ-diff-dirichlet}
\end{equation}
then ${\rm var}_{\lambda}(L_{1},f)\leq{\rm var}_{\lambda}(L_{2},f)$
for $\lambda>0$ and $f\in L^{2}(\mu)$ such that $Qf=f$.
\end{thm}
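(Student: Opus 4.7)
The approach is to apply Theorem \ref{thm:continous-order-with-dirichlet} with $\mathcal{A}=\mathbf{C}_{c}^{1}(E)$, verifying the differentiability hypothesis via Lemma \ref{lem:continuous-diff-resolvent}. The key observation is that for $\beta\in[1,2]$ the operator $L(\beta):=(2-\beta)L_{1}+(\beta-1)L_{2}$ acts on $\mathbf{C}_{c}^{1}(E)$ as the extended generator of a ZZ process with the same bounce and refreshment kernels $R_{j}$ but with convex intensities $\lambda_{\beta,j}:=(2-\beta)\lambda_{1,j}+(\beta-1)\lambda_{2,j}$; since (A\ref{hyp:ZZ-smooth-intensities}) is manifestly preserved under convex combinations, Theorem \ref{thm:ZZ-core} produces a Feller semigroup $\{P_{t}(\beta),t\geq0\}$ extending to a strongly continuous contraction semigroup on $L^{2}(\mu)$ with $\mathbf{C}_{c}^{1}(E)$ as a core and stable under $P_{t}(\beta)$. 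This furnishes both the common core and the interpolating family demanded by (A\ref{hyp:semi-group-strong-invariant-interpolation}); the $(\mu,Q)$-self-adjointness of $\{P_{t,i},t\geq0\}$ for $i\in\{1,2\}$ in turn follows from Proposition \ref{prop:ZZ:Qself} and Theorem \ref{prop:equiv-MDB-semigroup-generator}, noting $Q\mathbf{C}_{c}^{1}(E)\subset\mathbf{C}_{c}^{1}(E)$.

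Next I would verify the integrability and continuity conditions of Lemma \ref{lem:continuous-diff-resolvent}. For $f\in\mathbf{C}_{c}^{1}(E)$ and $\beta\in[1,2]$ we have $P_{t}(\beta)f\in\mathbf{C}_{c}^{1}(E)$ with $\|P_{t}(\beta)f\|_{\infty}\leq\|f\|_{\infty}$, and because the free-transport parts cancel,
\[
(L_{2}-L_{1})P_{t}(\beta)f=\sum_{j=1}^{d+1}(\lambda_{2,j}-\lambda_{1,j})\cdot\bigl[R_{j}-{\rm Id}\bigr]P_{t}(\beta)f.
\]
Since each $R_{j}$ is a contraction on $\mathbf{C}_{0}(E)$, the hypothesis $\|\lambda_{1,j}-\lambda_{2,j}\|_{\mu}<\infty$ yields the uniform bound $\|(L_{2}-L_{1})P_{t}(\beta)f\|_{\mu}\leq 2\|f\|_{\infty}\sum_{j}\|\lambda_{1,j}-\lambda_{2,j}\|_{\mu}$, which delivers summability against $\exp(-\lambda t)$ for every $\lambda>0$, uniformly in $\beta$. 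The same bound applied with $Qf\in\mathbf{C}_{c}^{1}(E)$ in place of $f$ handles the $QP_{t}(\beta)f$ integrand. Continuity in $L^{2}(\mu)$ of $t\mapsto(L_{2}-L_{1})P_{t}(\beta)f$ follows from sup-norm continuity of $t\mapsto P_{t}(\beta)f$ on $\mathbf{C}_{0}(E)$ (Feller property) combined with the same estimate applied to increments.

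Finally, I would identify the Dirichlet form hypothesis of Theorem \ref{thm:continous-order-with-dirichlet} with the inequality (\ref{eq:ZZ-diff-dirichlet}). From the display above,
\[
\mathcal{E}(g,L_{1}Q-L_{2}Q)=\bigl\langle g,-(L_{1}-L_{2})Qg\bigr\rangle_{\mu}=\sum_{j=1}^{d+1}\bigl\langle g,(\lambda_{1,j}-\lambda_{2,j})[Qg-R_{j}Qg]\bigr\rangle_{\mu},
\]
and adding and subtracting $\sum_{j}(\lambda_{1,j}-\lambda_{2,j})g$ inside the bracket rearranges this sum into exactly the right-hand side of (\ref{eq:ZZ-diff-dirichlet}). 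Hence (\ref{eq:ZZ-diff-dirichlet}) is the assumption $\mathcal{E}(g,L_{1}Q-L_{2}Q)\geq0$ of Theorem \ref{thm:continous-order-with-dirichlet}, whose second conclusion produces ${\rm var}_{\lambda}(f,L_{1})\leq{\rm var}_{\lambda}(f,L_{2})$ for $\lambda>0$ and $f\in L^{2}(\mu)$ with $Qf=f$. The main obstacle is not conceptual but bookkeeping: the regularity and uniform $L^{2}$-bounds needed by Lemma \ref{lem:continuous-diff-resolvent} are what force the $L^{2}(\mu)$-integrability assumption on $\lambda_{1,j}-\lambda_{2,j}$ in the theorem.
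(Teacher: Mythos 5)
Your proposal is correct and follows essentially the same route as the paper's proof: interpolating via the convex combination of intensities (which preserves (A\ref{hyp:ZZ-smooth-intensities}) since the constraints are affine), invoking Theorem \ref{thm:ZZ-core}, Proposition \ref{prop:ZZ:Qself} and Theorem \ref{prop:equiv-MDB-semigroup-generator} for (A\ref{hyp:semi-group-strong-invariant-interpolation}), bounding $\|(\lambda_{1,j}-\lambda_{2,j})\cdot[R_{j}-{\rm Id}]P_{t}(\beta)f\|_{\mu}\leq2\|\lambda_{1,j}-\lambda_{2,j}\|_{\mu}\|f\|_{\infty}$ to verify Lemma \ref{lem:continuous-diff-resolvent}, and concluding with Theorem \ref{thm:continous-order-with-dirichlet}. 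No substantive differences to report.
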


\begin{rem}
The assumption on the intensity is satisfied as soon as for some $c,C>0$,
for any $i\in\left\llbracket 1,d+1\right\rrbracket $ and $(x,v)\in E$,
$\lambda_{1,i}(x,v)\leq c+C\|\nabla U(x)\|$ and $\int\|\nabla U\|^{2}{\rm d}\pi<\infty$.
As we shall see this can be checked for various examples.
\end{rem}

\begin{proof}
We check that the assumptions in (A\ref{hyp:semi-group-strong-invariant-interpolation})
are satisfied. First we note that for any $\beta\in[1,2]$ one can
define a ZZ process with intensities $(2-\beta)\lambda_{1,i}+(\beta-1)\lambda_{2,i}$
for $i\in\left\llbracket 1,d+1\right\rrbracket $ and extended generator
$(2-\beta)L_{1}+(\beta-1)L_{2}$. Further note that for $i\in\left\llbracket 1,d+1\right\rrbracket $,
$(2-\beta)\lambda_{1,i}+(\beta-1)\lambda_{2,i}$ satisfies (A\ref{hyp:ZZ-smooth-intensities}).
From Theorem \ref{thm:ZZ-core} for $\beta\in[1,2]$ we have that
$\mathbf{C}_{c}^{1}(E)\subset\mathcal{D}^{2}(L_{\mu}(\beta),\mu)$
is a core for $L_{\mu}(\beta)$, the generator of $\{P_{t}(\beta),t\geq0\}$
on $L^{2}(\mu)$, dense in $L^{2}(\mu)$ and such that for any $t\geq0$,
$P_{t}(\beta)\mathbf{C}_{c}^{1}(E)\subset\mathbf{C}_{c}^{1}(E)$.
From the definition of $\mathbf{C}_{c}^{1}(E)$, $Q\mathbf{C}_{c}^{1}(E)\subset\mathbf{C}_{c}^{1}(E)$
and from Proposition \ref{prop:ZZ:Qself} $L_{\mu}(\beta)$ is $(\mu,Q)-$symmetric
on $\mathbf{C}_{c}^{1}(E)$ for any $\beta\in[1,2]$. One can therefore
apply Theorem \ref{prop:equiv-MDB-semigroup-generator} and deduce
that for any $\beta\in[1,2]$, $\{P_{t}(\beta),t\geq0\}$ is $(\mu,Q)-$self-adjoint.
We now turn to checking the assumptions of Lemma \ref{lem:continuous-diff-resolvent}.
First note that we have for $g\in\mathbf{C}_{c}^{1}(E)$
\[
-(L_{1}-L_{2})g=\sum_{i=1}^{d+1}(\lambda_{1,i}-\lambda_{2,i})\cdot[{\rm Id}-R_{i}]g,
\]
where for $i\in\left\llbracket 1,d+1\right\rrbracket $
\begin{align*}
\|(\lambda_{1,i}-\lambda_{2,i})\cdot[{\rm Id}-R_{i}]g\|_{\mu} & \leq\|\lambda_{1,i}-\lambda_{2,i}\|_{\mu}\|[{\rm Id}-R_{i}]g\|_{\infty}\\
 & \leq2\|\lambda_{1,i}-\lambda_{2,i}\|_{\mu}\|g\|_{\infty},
\end{align*}
with $\|\lambda_{1,i}-\lambda_{2,i}\|_{\mu}<\infty$ by assumption.
Now for $\beta\in[1,2]$, $f\in\mathbf{C}_{c}^{1}(E)$ and $t\in\mathbb{R_{+}}$,
we choose $g_{t}=P_{t}(\beta)f\in\mathbf{C}_{c}^{1}(E)$ or $g_{t}=QP_{t}(\beta)f\in\mathbf{C}_{c}^{1}(E)$
(from Theorem \ref{thm:ZZ-core}), note that in both scenarios $\|g_{t}\|_{\infty}\leq\|f\|_{\infty}$
and deduce the continuity and summability conditions of Lemma \ref{lem:continuous-diff-resolvent}.
We conclude with Theorem \ref{thm:continous-order-with-dirichlet}.
\end{proof}
\begin{example}
\label{exa:BierkDun}When $d=1$, $R_{1}=Q$ and therefore $R_{1}Q={\rm Id}$.
If we further assume that $\lambda_{1,2}=\lambda_{2,2}=\bar{\lambda}$
then for all $g\in\mathbf{C}_{c}^{1}(E)$
\[
\bigl\langle g,-(L_{1}-L_{2})Qg\bigr\rangle_{\mu}=-\bigl\langle g,(\lambda_{1,1}-\lambda_{2,2})\cdot[{\rm Id}-Q]g\bigr\rangle_{\mu}\geq0,
\]
whenever $\lambda_{2,1}\geq\lambda_{1,1}$, a result similar to that
of \cite{bierkens2016limit}.
\end{example}

The situation where the total event rate is constant, that is $\lambda_{1}=\lambda_{2}$
in the expression above, but distributed differently between updates
of the velocity leads to the following.
\begin{example}
\label{exa:constant-total-rate}Let $d=2$ and for $g\in\mathbf{C}_{c}^{1}(E)$
let
\[
Lg=Dg+\sum_{i=1}^{2}\lambda_{i}\cdot\big[R_{i}-{\rm Id}\big]g
\]
and consider the ZZ processes of generators, for $\mathbf{C}^{1}(\mathsf{X})\ni\bar{\gamma}\colon\mathsf{X}\rightarrow\mathbb{R}_{+}$,
\begin{align*}
L_{1}g & =Lg+\bar{\gamma}/2\cdot\sum_{i=1}^{2}\big[R_{i}-{\rm Id}\big]g\\
L_{2}g & =Lg+\bar{\gamma}\cdot\big[\varPi-{\rm Id}\big]g.
\end{align*}
Then for $g\in\mathbf{C}_{c}^{1}(E)$,
\begin{align*}
\bigl\langle g,-(L_{1}-L_{2})Qg\bigr\rangle_{\mu} & =\bigl\langle g,\bar{\gamma}/2\cdot\sum_{i=1}^{2}[{\rm Id}-R_{i}Q]g\bigr\rangle_{\mu}-\bigl\langle g,\bar{\gamma}\cdot[{\rm Id}-\varPi]g\bigr\rangle_{\mu}\\
 & =\bigl\langle g,\bar{\gamma}/2\cdot\sum_{i=1}^{2}[{\rm Id}-R_{i}]g\bigr\rangle_{\mu}-\bigl\langle g,\bar{\gamma}\cdot[{\rm Id}-\varPi]g\bigr\rangle_{\mu}\geq0,
\end{align*}
where the equality follows from $R_{1}Q=R_{2}$, $R_{2}Q=R_{1}$,
Lemma \ref{lem:ZZ-polarization} and \cite[p. 52]{o2014analysis}.
We therefore conclude that in this setup partial refreshment of the
velocity is superior to full refreshment in terms of asymptotic variance. 
\end{example}

We note that checking (\ref{eq:ZZ-diff-dirichlet}) involves the difference
of two non-negative terms (from Lemma \ref{lem:ZZ-polarization})
and may be challenging to establish for this class of processes. For
example we have not been able to extend the result of Example \ref{exa:BierkDun}
to the situation where $d\geq2$, yet. We have not explored comparisons
involving other updates $R_{i}$, which would require establishing
Theorem \ref{thm:ZZ-core} for this setup, and rather focus on the
following issue. Intensities of interest may not satisfy (A\ref{hyp:ZZ-smooth-intensities})
and we may not be able to apply Theorem \ref{thm:ZZ-core}. This is
the case for the so-called canonical choice $\lambda(x,v)=\big(\partial U(x)v\big)_{+}$,
which may however be of interest as suggested by the following. In
the following discussion we assume $d=1$ for presentational simplicity,
but the approach is valid for $d\geq1$.
\begin{prop}
Let $\lambda\colon E\rightarrow\mathbb{R}_{+}$ be an intensity satisfying
$\lambda(x,v)-Q\lambda(x,v)=\partial U(x)v$, then $\lambda(x,v)\geq\big(\partial U(x)v\big)_{+}$.
\end{prop}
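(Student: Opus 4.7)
The plan is to proceed by a simple case split on the sign of $\partial U(x)v$, using only the defining identity $\lambda(x,v)-\lambda(x,-v)=\partial U(x)v$ (which is what $\lambda-Q\lambda=\partial U(x)v$ unpacks to, since $Qf(x,v)=f(x,-v)$) together with the non-negativity of $\lambda$.

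First, if $\partial U(x)v\leq 0$, then $(\partial U(x)v)_{+}=0$ and the bound $\lambda(x,v)\geq 0=(\partial U(x)v)_{+}$ holds simply because $\lambda$ takes values in $\mathbb{R}_{+}$. Second, if $\partial U(x)v>0$, I rearrange the identity to write
\[
\lambda(x,v)=\lambda(x,-v)+\partial U(x)v\geq \partial U(x)v=(\partial U(x)v)_{+},
\]
where again $\lambda(x,-v)\geq 0$ is used. Combining the two cases gives the claimed inequality pointwise on $E$.

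There is no real obstacle here: the statement is essentially an observation that once one imposes the skew condition $\lambda-Q\lambda=\partial U\cdot v$ on a non-negative function, the canonical choice $(\partial U\cdot v)_{+}$ is automatically the pointwise minimum among admissible intensities. The only thing worth noting is that equality holds exactly when $\lambda(x,-v)=0$ whenever $\partial U(x)v>0$, which recovers the canonical intensity $\lambda(x,v)=(\partial U(x)v)_{+}$ as the minimal element of the class.
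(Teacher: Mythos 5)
Your proof is correct and is essentially the paper's argument: both split according to the sign of $\partial U(x)v$ and use the identity $\lambda(x,v)=\lambda(x,-v)+\partial U(x)v$ together with $\lambda\geq 0$ (the paper phrases the case split via the sets $V_{\pm}(x)$, but the content is identical). Your closing remark identifying when equality holds is a correct and worthwhile observation, though not required for the statement.
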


\begin{proof}
For $x\in\mathsf{X}$ consider the sets $V_{\pm}(x)=\left\{ v\in\mathsf{V}\colon\pm\partial U(x)v\geq0\right\} $.
From the assumption, for $(x,v)\in E$ 
\[
\lambda(x,v)-Q\lambda(x,v)=\big(\partial U(x)v\big)_{+}-\big(-\partial U(x)v\big)_{+},
\]
and we deduce that for $v\in V_{+}(x)$
\[
\lambda(x,\pm v)=\lambda(x,\mp v)+\big(\pm\partial U(x)v\big)_{+}\geq\big(\pm\partial U(x)v\big)_{+},
\]
and conclude since $\mathsf{V}=\{-1,1\}$.
\end{proof}
A natural question is whether we can establish that the choice $\lambda^{0}(x,v):=\big(\partial U(x)v\big)_{+}$
is optimum in terms of asymptotic variance. Our argument relies on
the existence of regularizing intensities satisfying the following
properties.

\begin{hyp}\label{hyp:regularized-intensities}The family of intensities
$\left\{ \lambda^{\epsilon},\epsilon\geq0\right\} $ satisfies for
any $(x,v)\in E$ and $\epsilon>0$,
\begin{enumerate}
\item $\lambda^{\epsilon}\in\mathbf{C}^{1}(E)$ and $\lambda^{\epsilon}>0$,
\item $\epsilon\mapsto\lambda^{\epsilon}(x,v)$ is non-increasing,
\item $\lambda^{\epsilon}(x,v)-Q\lambda^{\epsilon}(x,v)=\partial U(x)v,$
\item \label{enu:regular-lambda-unif-cv}$\lim_{\epsilon\downarrow0}\sup_{(x,v)\in E}\vert\lambda^{\epsilon}(x,v)-\lambda^{0}(x,v)\vert=0$.
\end{enumerate}
\end{hyp}

Intensities satisfying these properties exist:
\begin{prop}
\label{prop:example-regularized-intensities}Assume (A\ref{hyp:ZZ-potential})
and for any $\epsilon>0$ define the intensities such that for $(x,v)\in E$,
\[
\lambda^{\epsilon}(x,v):=-\log\left(\phi_{\epsilon}\big(\exp(\partial U(x)v)\big)\right),
\]
where for $r>0$
\[
\phi_{\epsilon}(r):=r[1-\Phi(\epsilon/2+\log(r)/\epsilon)]+[1-\Phi(\epsilon/2-\log(r)/\epsilon)]>0,
\]
with $\Phi(\cdot)$ the cumulative distribution function of the $\mathcal{N}(0,1)$.
Then $\left\{ \lambda^{\epsilon},\epsilon>0\right\} $ satisfies (A\ref{hyp:regularized-intensities}).
\end{prop}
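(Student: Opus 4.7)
The plan is to verify the four conditions of (A\ref{hyp:regularized-intensities}) by first checking the hypotheses of Proposition \ref{cor:regularized-intensities} for $\phi_\epsilon$, and then handling the two items specific to this family (monotonicity and uniform convergence) separately. The central tool is the probabilistic representation
\[
\phi_\epsilon(r)=\mathbb{E}\bigl[\min\{1,rZ_\epsilon\}\bigr],\qquad Z_\epsilon:=\exp(\epsilon X-\epsilon^{2}/2),\ X\sim\mathcal{N}(0,1),
\]
which one obtains by splitting the expectation at $\{rZ_\epsilon\leq1\}$ and applying the Girsanov-type identity $\mathbb{E}[e^{\epsilon X}\mathbb{I}\{X\leq c\}]=e^{\epsilon^{2}/2}\Phi(c-\epsilon)$; the two resulting Gaussian integrals reassemble into the stated Gaussian form in terms of $\Phi(\epsilon/2\pm\log r/\epsilon)$. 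Viewing $\phi_\epsilon$ as the expected Metropolis acceptance probability under a unit-mean log-normal multiplicative perturbation will make the remaining verifications transparent.

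First I would verify the hypotheses of Proposition \ref{cor:regularized-intensities}. Smoothness $\phi_\epsilon\in\mathbf{C}^{1}(\mathbb{R}_{+})$ and positivity $\phi_\epsilon>0$ are immediate from the explicit formula. The strict upper bound $\phi_\epsilon<1$ follows from strict Jensen applied to the representation: $u\mapsto\min\{1,u\}$ is strictly concave and $Z_\epsilon$ is non-degenerate with $\mathbb{E}[Z_\epsilon]=1$, so $\phi_\epsilon(r)<\min\{1,r\mathbb{E}[Z_\epsilon]\}=\min\{1,r\}\leq 1$. The functional equation $r\phi_\epsilon(r^{-1})=\phi_\epsilon(r)$ is then a one-line algebraic check from the explicit formula. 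Proposition \ref{cor:regularized-intensities} thereby delivers items 1 and 3 of (A\ref{hyp:regularized-intensities}); the additional smoothness of $\lambda^\epsilon$ on $E$ uses (A\ref{hyp:ZZ-potential}) so that $\partial_iU\in\mathbf{C}^{1}$ and, because $\phi_\epsilon$ stays bounded away from zero on compacts, the composition $-\log\phi_\epsilon\circ\exp\circ(\partial_iU\cdot v_i)$ remains $\mathbf{C}^{1}$.

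Monotonicity in $\epsilon$ (item 2) is where the log-normal picture really pays off: for $\epsilon_1\leq\epsilon_2$ the laws of $rZ_{\epsilon_1}$ and $rZ_{\epsilon_2}$ share the common mean $r$ and are ordered in the convex order (log-normals with fixed mean and increasing log-variance $\epsilon^{2}$), and concavity of $\min\{1,\cdot\}$ then gives monotonicity of $\epsilon\mapsto\phi_\epsilon(r)$, hence of $\lambda^\epsilon=-\log\phi_\epsilon$.

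The uniform convergence in item 4 is the main technical obstacle, because the argument $r=\exp(\partial_iU(x)v_i)$ ranges over all of $(0,\infty)$. The plan is: first, exploit the functional equation $r\phi_\epsilon(r^{-1})=\phi_\epsilon(r)$ together with the analogous symmetry of $\min\{1,\cdot\}$ to reduce the problem to $r\in(0,1]$; second, use the Mills-ratio bound $1-\Phi(t)\leq t^{-1}\varphi(t)$ to produce a bound $|\phi_\epsilon(r)-\min\{1,r\}|\leq C\epsilon$ uniformly in $r$, with the worst case sitting at $r=1$ where the direct expansion $\phi_\epsilon(1)=2[1-\Phi(\epsilon/2)]=1-\epsilon\sqrt{2/\pi}+O(\epsilon^{3})$ pins down the leading order; finally, transfer uniformity to $\lambda^\epsilon=-\log\phi_\epsilon$ via $|\log a-\log b|\leq |a-b|/\min(a,b)$, using the matching Gaussian lower bound on $\phi_\epsilon$ to prevent the log from blowing up. The delicate point is splitting the two regimes $|\log r|\lesssim\epsilon$ and $|\log r|\gg\epsilon$ so that a single bound works throughout; once this is in place, uniformity of $\lambda^\epsilon\to\lambda^0$ over $E$ follows.
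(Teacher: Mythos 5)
Your proposal is correct in outline and rests on the same key device as the paper: the representation of $\phi_\epsilon(r)$ as the expectation of a Metropolis acceptance ratio under a unit-mean log-normal perturbation, from which positivity, $\phi_\epsilon<1$ (strict Jensen), the symmetry $r\phi_\epsilon(r^{-1})=\phi_\epsilon(r)$, and hence the hypotheses of Proposition \ref{cor:regularized-intensities} all follow. (Incidentally, your parametrization $Z_\epsilon=\exp(\epsilon X-\epsilon^2/2)$ is the one consistent with the $\Phi(\epsilon/2\pm\log(r)/\epsilon)$ appearing in the statement; the paper's Proposition \ref{prop:bound-approx-penalty-intensity} works with log-variance $\epsilon$ rather than $\epsilon^2$, a harmless reparametrization.) Two points of divergence. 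First, you prove the monotonicity item of (A\ref{hyp:regularized-intensities}) via the convex order of fixed-mean log-normals; the paper's proof is silent on this item, so your argument is a genuine addition --- just note that it yields $\epsilon\mapsto\phi_\epsilon(r)$ non-increasing, hence $\epsilon\mapsto\lambda^\epsilon$ non-\emph{de}creasing, which is the opposite sign to the literal wording of (A\ref{hyp:regularized-intensities}); what is actually used downstream is $\lambda^\epsilon\geq\lambda^0$ together with uniform convergence, so the wording there appears to be a slip rather than a defect of your argument.

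Second, and this is the one place where your sketch is genuinely underdeveloped: for the uniform convergence you propose an additive bound $\vert\phi_\epsilon(r)-\phi_0(r)\vert\leq C\epsilon$ followed by $\vert\log a-\log b\vert\leq\vert a-b\vert/\min(a,b)$. An additive bound alone cannot control $\lambda^\epsilon-\lambda^0=\log\bigl(\phi_0/\phi_\epsilon\bigr)$ uniformly, because $\phi_0(r)=r\to0$ as $r\to0$; you flag this and invoke a ``matching Gaussian lower bound'' and a regime split in $\vert\log r\vert$, which can be made to work but is where all the labour sits. The paper short-circuits this with a \emph{multiplicative} bound: from $\min\{1,ab\}\geq\min\{1,a\}\min\{1,b\}$ one gets $0\leq\phi_0(r)-\phi_\epsilon(r)\leq\phi_0(r)\,\mathbb{E}\vert1-Z_\epsilon\vert\leq\phi_0(r)\bigl[\exp(\epsilon)-1\bigr]^{1/2}$ by Cauchy--Schwarz and the log-normal variance, whence $0\leq\lambda^\epsilon-\lambda^0\leq-\log\bigl\{1-[\exp(\epsilon)-1]^{1/2}\bigr\}$ uniformly on $E$ with no case analysis. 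If you substitute that one inequality for your regime-splitting step, your argument closes cleanly; as written, the $r\to0$ part of your item-4 argument is a sketch rather than a proof.
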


\begin{proof}
As shown in Proposition \ref{prop:bound-approx-penalty-intensity}
$\phi_{\epsilon}(r)$ is the acceptance probability of the penalty
method \cite{ceperley1999penalty}, a particular instance of the Metropolis-Hastings
algorithm, and therefore satisfies the assumptions of Proposition
\ref{cor:regularized-intensities}. Condition (A\ref{hyp:regularized-intensities})-\ref{enu:regular-lambda-unif-cv}
is a consequence of the corollary of Proposition \ref{prop:bound-approx-penalty-intensity}.
\end{proof}
\begin{thm}
Let $d=1$, assume (A\ref{hyp:ZZ-potential}) and $\int\|\nabla U\|^{2}{\rm d}\pi<\infty$,
and consider two ZZ processes of common invariant distribution and
of intensities $\lambda_{1}$ and $\lambda_{2}$ where $\lambda_{1}(x,v):=\big(\partial U(x)\cdot v\big)_{+}$
and $\lambda_{2}(x,v):=\lambda_{1}(x,v)+\gamma(x,v)$ with $0\leq\gamma\leq c+C\|\nabla U\|$
for $c,C>0$, $\gamma\in\mathbf{C}^{1}(E)$ and such that $\gamma-Q\gamma=0$.
Then for any $f\in L^{2}(\mu)$ such that $Qf=f$ and $\lambda\in[0,1)$
\[
{\rm var}_{\lambda}(f,L_{1})\leq{\rm var}_{\lambda}(f,L_{2}).
\]
\end{thm}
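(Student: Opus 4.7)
The difficulty is that $\lambda_{1}(x,v)=(\partial U(x)\cdot v)_{+}$ is not of class $\mathbf{C}^{1}(E)$, so (A\ref{hyp:ZZ-smooth-intensities}) fails and Theorem \ref{thm:ZZ-main-result} cannot be invoked directly. My plan is to approximate $\lambda_{1}$ using the smooth family supplied by Proposition \ref{prop:example-regularized-intensities}, apply Theorem \ref{thm:ZZ-main-result} to the regularized pair, and then pass to the limit via a bounded-perturbation argument on the resolvents.

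\textbf{Reduction to smoothed intensities.} Take $\{\lambda^{\epsilon}\}_{\epsilon>0}$ from Proposition \ref{prop:example-regularized-intensities} and set $\lambda_{1}^{\epsilon}:=\lambda^{\epsilon}$, $\lambda_{2}^{\epsilon}:=\lambda^{\epsilon}+\gamma$. Both are $\mathbf{C}^{1}(E)$ and strictly positive; the skew-symmetry $\lambda_{i}^{\epsilon}-Q\lambda_{i}^{\epsilon}=\partial U\cdot v$ follows from that of $\lambda^{\epsilon}$ and $Q\gamma=\gamma$; and $R_{1}\lambda_{i}^{\epsilon}=Q\lambda_{i}^{\epsilon}$ holds automatically since $d=1$ forces $R_{1}=Q$. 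Thus (A\ref{hyp:ZZ-smooth-intensities}) is met for each. Taking a common refreshment rate for both processes, the integrability hypothesis of Theorem \ref{thm:ZZ-main-result} reduces to $\|\lambda_{1}^{\epsilon}-\lambda_{2}^{\epsilon}\|_{\mu}=\|\gamma\|_{\mu}<\infty$, which follows from $|\gamma|\leq c+C\|\nabla U\|$ together with $\int\|\nabla U\|^{2}{\rm d}\pi<\infty$.

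\textbf{The Dirichlet form inequality.} I then check (\ref{eq:ZZ-diff-dirichlet}) for $L_{1}^{\epsilon}$ and $L_{2}^{\epsilon}$. Since $\mathsf{V}=\{-1,1\}$ gives $R_{1}f(x,v)=f(x,-v)=Qf(x,v)$, we have $R_{1}Q={\rm Id}$, so the $i=1$ summand vanishes; the $i=2$ summand vanishes because the refreshment rates coincide. The total-rate term contributes
\[
-\bigl\langle g,(\lambda_{1}^{\epsilon}-\lambda_{2}^{\epsilon})\cdot[{\rm Id}-Q]g\bigr\rangle_{\mu}=\bigl\langle g,\gamma\cdot[{\rm Id}-Q]g\bigr\rangle_{\mu}.
\]
Decomposing $g=\Pi_{+}g+\Pi_{-}g$ with $\Pi_{\pm}=({\rm Id}\pm Q)/2$, the identity $[{\rm Id}-Q]g=2\Pi_{-}g$ combined with $Q\gamma=\gamma$ and the symmetry of $\mu$ under $v\mapsto-v$ makes the cross term $\int\gamma\,(\Pi_{+}g)(\Pi_{-}g)\,{\rm d}\mu$ vanish (the integrand is odd in $v$), leaving $2\bigl\langle\Pi_{-}g,\gamma\,\Pi_{-}g\bigr\rangle_{\mu}\geq0$ since $\gamma\geq0$. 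Theorem \ref{thm:ZZ-main-result} then yields ${\rm var}_{\lambda}(f,L_{1}^{\epsilon})\leq{\rm var}_{\lambda}(f,L_{2}^{\epsilon})$ for every $\epsilon>0$ and $f\in L^{2}(\mu)$ with $Qf=f$.

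\textbf{Passage to the limit.} The perturbation $L_{i}^{\epsilon}-L_{i}=(\lambda^{\epsilon}-\lambda_{1})\cdot[R_{1}-{\rm Id}]$ extends to a bounded operator on $L^{2}(\mu)$ of norm at most $2\|\lambda^{\epsilon}-\lambda_{1}\|_{\infty}$, which tends to zero by (A\ref{hyp:regularized-intensities})-\ref{enu:regular-lambda-unif-cv}. The second resolvent identity $R_{\lambda}^{i,\epsilon}-R_{\lambda}^{i}=R_{\lambda}^{i,\epsilon}(L_{i}^{\epsilon}-L_{i})R_{\lambda}^{i}$, together with the contraction bound $\vvvert R_{\lambda}^{\cdot}\vvvert_{\mu}\leq\lambda^{-1}$, yields operator-norm convergence of the resolvents, hence ${\rm var}_{\lambda}(f,L_{i}^{\epsilon})=2\bigl\langle f,R_{\lambda}^{i,\epsilon}f\bigr\rangle_{\mu}\to 2\bigl\langle f,R_{\lambda}^{i}f\bigr\rangle_{\mu}={\rm var}_{\lambda}(f,L_{i})$, and the inequality passes to the limit. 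The main technical obstacle I anticipate is justifying that $L_{1}$ and $L_{2}$ themselves generate strongly continuous contraction semigroups on $L^{2}(\mu)$, since the non-smoothness of $\lambda_{1}$ places us outside the scope of Theorem \ref{thm:ZZ-core}; this must be obtained from a non-explosion argument exploiting the growth bound $\lambda_{1}\leq\|\nabla U\|$ with $\int\|\nabla U\|^{2}{\rm d}\pi<\infty$, together with $(\mu,Q)$-symmetry of the extended generator on $\mathbf{C}_{c}^{1}(E)$, appealing to general PDMP constructions in the spirit of \cite{davis1993markov,2018arXiv180705421D}.
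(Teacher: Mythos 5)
Your strategy coincides with the paper's: regularize $\lambda_{1}$ via Proposition \ref{prop:example-regularized-intensities}, set $\lambda_{2}^{\epsilon}=\lambda_{1}^{\epsilon}+\gamma$, apply Theorem \ref{thm:ZZ-main-result} to the smooth pair, and pass to the limit in $\epsilon$. Your direct verification of (\ref{eq:ZZ-diff-dirichlet}) via the $\Pi_{\pm}$ decomposition is correct and is the same content as the paper's appeal to Example \ref{exa:BierkDun} together with the polarization identity of Lemma \ref{lem:ZZ-polarization}. The one place where you genuinely diverge is the limit passage: you use the second resolvent identity with the operator-norm bound $2\|\lambda^{\epsilon}-\lambda_{1}\|_{\infty}\to0$, whereas the paper goes through strong convergence of the semigroups, $\|P_{t}^{\epsilon}f-P_{t}f\|_{\mu}\to0$ (Theorem \ref{thm:cv-regularized-semigroup-mu-invariant}), and dominated convergence of $\int e^{-\lambda t}\langle f,P_{t}f\rangle_{\mu}\,{\rm d}t$ (Lemma \ref{lem:cv-regularized-asympvar}). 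Your route is shorter once the limiting objects are in hand, but it presupposes exactly the point you flag at the end: that the canonical-intensity processes generate strongly continuous contraction semigroups on $L^{2}(\mu)$ whose resolvents are the $R_{\lambda}^{i}$ appearing in ${\rm var}_{\lambda}(f,L_{i})$, with $L_{i}^{\epsilon}=L_{i}+(\lambda^{\epsilon}-\lambda_{1})\cdot[R_{1}-{\rm Id}]$ on a common domain. The paper supplies precisely this through Theorem \ref{thm:cv-regularized-semigroup-mu-invariant} (Feller property, core, $\mu$-invariance and $L^{2}(\mu)$-extension for the non-smooth process), itself resting on the cited results of \cite{2018arXiv180705421D} and the uniform bound of (A\ref{hyp:regularized-intensities}). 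So your proposal is correct modulo that acknowledged gap, which is not a trivial afterthought but the main technical content of the paper's appendix treatment; it cannot be dispensed with by the resolvent identity alone.
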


\begin{proof}
We consider regularized intensities $\lambda_{2}^{\epsilon}(x,v):=\lambda_{1}^{\epsilon}(x,v)+\gamma(x,v)$
satisfying (A\ref{hyp:regularized-intensities}) (we have shown that
we can construct such intensities in Proposition \ref{prop:example-regularized-intensities}).
From Example \ref{exa:BierkDun} and Theorem \ref{thm:ZZ-main-result},
for any $f\in L^{2}(\mu)$ such that $Qf=f$ and $\epsilon>0$ we
have
\[
{\rm var}_{\lambda}(f,L_{1}^{\epsilon})\leq{\rm var}_{\lambda}(f,L_{2}^{\epsilon}).
\]
We can now conclude with Theorem \ref{thm:cv-regularized-semigroup-mu-invariant}
and Lemma \ref{lem:cv-regularized-asympvar}.
\end{proof}
One can consider more general forms for $\lambda_{2}$ in the theorem
above. For example the result will hold when $\gamma$ can be uniformly
approximated by a sequence $\{\gamma^{\epsilon}\in\mathbf{C}^{1}(E),\epsilon>0\}$
such that $\gamma^{\epsilon}\geq0$ for $\epsilon_{0}>\epsilon>0$
for some $\epsilon_{0}>0$. Another possibility is to consider generalizations
of the ideas of Proposition \ref{prop:example-regularized-intensities}:
for example with $\tilde{\phi}(r)=r/(1+r)$ instead of $\phi(r)=\min\{1,r\}$
as a starting point in Proposition \ref{prop:bound-approx-penalty-intensity}
one can analogously define a family of acceptance ratios which is
automatically such that $\tilde{\phi}_{\epsilon}(r)\leq\phi_{\epsilon}(r)$
for $\epsilon>0$ and $r\geq0$, define the corresponding intensities,
and then proceed as above to compare the processes with intensities
derived from $\tilde{\phi}(\cdot)$ and $\phi(\cdot)$. 

\noindent 

\section{Conclusion}

We have extended the set of practical tools available to characterize
existing Markov chain or process Monte Carlo algorithms to the scenario
where the building blocks involved are not reversible. We have shown
how they can be used to characterize algorithms previously beyond
the reach of earlier theory, confirming in some cases their good properties
in full generality. A natural question, not addressed here, is that
of the comparison of speed of convergence to equilibrium for the class
of processes considered here, for which a partial result exists in
the time-reversible setup (see Theorem \ref{thm:carraciolo-tierney}).
Consider for example the scenario where $\pi$ has finite support
$\llbracket1,d\rrbracket$ and the transition in (\ref{eq:toy-gustafson})
is combined into a $2-$cycle (cf. Subsection \ref{subsec:equivalence})
with the $(\mu,Q)-$reversible transition $P'_{\alpha}=(1-\alpha){\rm Id}+\alpha Q$
for $\alpha\in[0,1]$, as suggested in \cite{diaconis2000analysis}
(see also references therein). The analysis of \cite{diaconis2000analysis}
(see also \cite{GADE2007382}), in the situation where $\pi$ is the
uniform distribution, shows that near optimal convergence speed to
equilibrium is achieved for $\alpha(d)=c/d>0$, whereas application
of Theorem \ref{thm:mdo-extension-main-result} shows that $\alpha$
closer to zero is a better choice when asymptotic variance is of interest,
since $\mathcal{E}(g,QP'_{\alpha})=(1-\alpha)/2\int[g(x,v)-g(x,-v)]^{2}\mu\big({\rm d}(x,v)\big)$.
To the best of our knowledge no systematic spectral theory exists
in the setup considered in this manuscript, despite the numerous analogies
with the $\mu-$self-adjoint scenario and its practical interest.
We note the very recent work \cite{2019arXiv190501691B} (focused
on a restricted scenario) and \cite{2018arXiv180808592A} (which provides
lower bounds on the spectral gap) which both suggest difficulties
and the need for the development of new tools.

\section{Acknowledgements}

The authors acknowledge support from EPSRC ``Intractable Likelihood:
New Challenges from Modern Applications (ILike)'', (EP/K014463/1).
CA acknowledges support from EPSRC ``COmputational Statistical INference
for Engineering and Security (CoSInES)'', (EP/R034710/1).

\newpage{}

\appendix

\section{Key results of \cite{2018arXiv180705421D} \label{sec:durmus-results}}

For the reader's convenience we reformulate the results of \cite{2018arXiv180705421D}
for the specific scenario where the flow used is linear, that is we
consider the PDMP of extended generator
\[
Lf=Df+\lambda[R-{\rm Id}]f,
\]
where here
\[
Rf=\lambda^{-1}\sum_{i=1}^{d}\lambda_{i}R_{i}f.
\]
Following the terminology of \cite{2018arXiv180705421D}, we refer
to $\big(\lambda,R\big)$ as characteristics of the process. The deterministic
flow for this process is $\varphi_{t}(x,v)=(x+tv,v)$ and the associated
regularity condition required in \cite{2018arXiv180705421D} translates
to: for any $v\in\mathsf{V}$, $(x,t)\mapsto x+tv$ is continuously
differentiable (which is trivially satisfied). For brevity we omit
this property from the statements of \cite{2018arXiv180705421D},
leading to:
\begin{lyxlist}{00.00.0000}
\item [{$\mathbf{\left\langle A2\right\rangle }$}] Let $\{P_{t},t\geq0\}$
be a non-explosive PDMP semigroup with characteristics $(\lambda,R)$.
Assume that for any $T\in\mathbb{R}_{+}$ there exists $M\in\mathbb{R}_{+}$
such that for all $(x,v)\in E$ and $t\in[0,T]$, ${\rm supp}\big(P_{t}(x,v;\cdot)\big)\subset B(x,M)\times\mathsf{V}$.
\item [{$\mathbf{\left\langle A3\right\rangle }$}] The characteristics
$(\lambda,R)$ satisfy
\begin{enumerate}
\item for all compact sets $K\subset E$ and $T\in\mathbb{R}_{+}$ there
exists a compact set $\tilde{K}\subset E$ such that for all $n\in\mathbb{N_{+}}$
and $\left\{ t_{i}\in\mathbb{R}_{+},i\in\left\llbracket 1,n\right\rrbracket \right\} $
such that $\sum_{i=1}^{n}t_{i}\leq T$, there exists a sequence of
compact sets $\left\{ K_{i}\subset E,i\in\left\llbracket 0,n\right\rrbracket \right\} $
such that $\bigcup{}_{i=0}^{n}K_{i}\subset\tilde{K}$ with $K_{0}:=K$
and
\begin{enumerate}
\item $K_{i}$, $i\in\left\llbracket 1,n\right\rrbracket $ dependent on
$\left\{ t_{i}\in\mathbb{R}_{+},j\in\left\llbracket 1,i\right\rrbracket \right\} $,
\item for $i\in\left\llbracket 0,n-1\right\rrbracket $, $s_{i}\in[0,t_{i+1}]$
and $s_{n+1}\in[0,T-\sum_{i=1}^{n}t_{j}]$
\[
\bigcup_{(x,v)\in K_{i}}{\rm supp}\left\{ R(x+t_{i+1}v,v;\cdot)\right\} \subset K_{i+1},
\]
and for any $(x,v)\in K_{i}$, then $(x+s_{i+1}v,v)\in\tilde{K}$.
\end{enumerate}
\item $\lambda\in\mathbf{C}^{1}(E)$, $(\lambda R)\mathbf{C}^{1}(E)\subset\mathbf{C}^{1}(E)$
and there exists a locally bounded function $\Psi\colon E\rightarrow\mathbb{R}_{+}$
such that for all $(x,v)\in K$, $K$ compact, and $f\in\mathbf{C}^{1}(E)$
\[
\|\nabla_{x}\big(\lambda Rf\big)(x,v)\|\leq\sup_{(x,v)\in K}\|\Psi(x,v)\|_{\infty}\sup\left\{ \vert f(y,w)+\|\nabla_{x}f(y,w)\|\colon(y,w)\in{\rm supp}\left\{ R(x,v;\cdot)\right\} \vert\right\} .
\]
\end{enumerate}
\end{lyxlist}
As pointed out in \cite{2018arXiv180705421D}, the first property
(which the authors refer to as compact compatibility) implies that
the process is non-explosive. The two results of \cite{2018arXiv180705421D}
we use are:
\begin{prop*}[{\cite[Proposition 15]{2018arXiv180705421D}}]
Let $\{P_{t},t\geq0\}$ be a PDMP semigroup of characteristics $(\lambda,R)$
satisfying $\mathbf{\left\langle A2\right\rangle }$. Then
\begin{enumerate}
\item $\mathbf{C}_{0}(E)\subset\mathbf{B}_{0}(E)$,
\item if $\lambda\in\mathbf{C}(E)$ and $(\lambda R)\mathbf{C}_{c}(E)\subset\mathbf{C}_{c}(E)$
then $\mathbf{C}_{c}^{1}(E)\subset\mathcal{D}_{\infty}(L_{\infty})$.
\end{enumerate}
\end{prop*}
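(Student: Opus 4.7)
The plan is to follow the proof of \cite[Proposition 15]{2018arXiv180705421D}, which relies on the first-jump decomposition of the semigroup together with the localization provided by $\mathbf{\left\langle A2\right\rangle }$. Starting from the probabilistic construction of Algorithm \ref{alg:piecewise-deterministic}, I would condition on the time $T_{1}$ of the first event, which conditional on $Z_{0}=(x,v)$ has survival function $\exp(-\Lambda(t,x,v))$. Writing $\varphi_{s}(x,v):=(x+sv,v)$ for the deterministic flow and applying the strong Markov property at $T_{1}$ yields
\[
P_{t}f(x,v)=e^{-\Lambda(t,x,v)}f\bigl(\varphi_{t}(x,v)\bigr)+\int_{0}^{t}e^{-\Lambda(s,x,v)}(\lambda R)\bigl(P_{t-s}f\bigr)\bigl(\varphi_{s}(x,v)\bigr)\,{\rm d}s.
\]
This identity is the only analytic input; everything else is bookkeeping.

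For part (a), I would fix $f\in\mathbf{C}_{0}(E)$ and $\epsilon>0$ and choose a compact $K\subset\mathsf{X}$ such that $|f(y,w)|<\epsilon$ for $y\notin K$. By $\mathbf{\left\langle A2\right\rangle }$ applied on $[0,1]$ there is $M\geq0$ such that ${\rm supp}\bigl(P_{t}(x,v;\cdot)\bigr)\subset B(x,M)\times\mathsf{V}$ for every $(x,v)\in E$ and $t\in[0,1]$; consequently both $|f(x,v)|$ and $|P_{t}f(x,v)|$ are bounded by $\epsilon$ whenever $x$ is outside the $M$-enlargement $K_{M}$ of $K$. On the compact set $K_{M}\times\mathsf{V}$, $f$ is uniformly continuous, $\Lambda(t,x,v)\to0$ uniformly as $t\downarrow0$ by integrability of $\lambda$ along trajectories and compactness, and the jump contribution is bounded by $2\|f\|_{\infty}\bigl(1-e^{-\Lambda(t,\cdot,\cdot)}\bigr)$ which vanishes uniformly. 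Combining the two regions gives $\|P_{t}f-f\|_{\infty}\to0$.

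For part (b), take $f\in\mathbf{C}_{c}^{1}(E)$ with support in a compact $K$. By the same localization, $P_{t}f(x,v)=0=Lf(x,v)$ whenever $x$ is outside the $M_{t}$-enlargement of $K$ and $t\in[0,1]$; in particular, for such $t$ the function $t^{-1}(P_{t}f-f)-Lf$ is supported in a fixed compact $\widetilde{K}\times\mathsf{V}$. On $\widetilde{K}\times\mathsf{V}$, Taylor expanding the displayed identity gives
\begin{align*}
e^{-\Lambda(t,x,v)}f(\varphi_{t}(x,v)) & =f(x,v)+tDf(x,v)-t\lambda(x,v)f(x,v)+o(t),\\
\int_{0}^{t}e^{-\Lambda(s,x,v)}(\lambda R)(P_{t-s}f)(\varphi_{s}(x,v))\,{\rm d}s & =t\,(\lambda R)f(x,v)+o(t),
\end{align*}
with remainders that are $o(t)$ \emph{uniformly} on $\widetilde{K}\times\mathsf{V}$ thanks to $f\in\mathbf{C}^{1}$, $\lambda\in\mathbf{C}(E)$, $(\lambda R)\mathbf{C}_{c}(E)\subset\mathbf{C}_{c}(E)$, and the pointwise convergence of $P_{s}f\to f$ from part (a) (giving continuity of $(s,x,v)\mapsto P_{s}f(x,v)$ on the compact $[0,1]\times\widetilde{K}\times\mathsf{V}$). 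Summing the two expansions recovers $Lf=Df+\lambda(Rf-f)$ and yields $\|t^{-1}(P_{t}f-f)-Lf\|_{\infty}\to0$.

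The main obstacle is ensuring the Taylor remainders in part (b) are uniform rather than merely pointwise; this is exactly where $\mathbf{\left\langle A2\right\rangle }$ combines with $(\lambda R)\mathbf{C}_{c}(E)\subset\mathbf{C}_{c}(E)$. Compact support of $f$ is not automatically inherited by $P_{t-s}f$, but the integrand $(\lambda R)(P_{t-s}f)(\varphi_{s}(x,v))$ is controlled because compact compatibility bounds how far the process can travel in time $s\in[0,t]$, while $\lambda R$ preserves compact support of the functions it hits. A moderate amount of $\epsilon/\delta$ work, carried out in detail in \cite{2018arXiv180705421D}, converts these qualitative observations into the required uniform $o(t)$ estimates.
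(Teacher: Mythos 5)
This proposition is not proved in the paper at all: it is imported, in reformulated form, from \cite{2018arXiv180705421D}, and the appendix only verifies that its hypotheses hold for the Zig-Zag process. There is therefore no in-paper argument to compare against, but your sketch follows the route of the cited source, which is essentially the only natural one: the first-jump (Duhamel) decomposition of $P_{t}$ combined with the localization supplied by $\mathbf{\left\langle A2\right\rangle }$. The decomposition you write down is correct, the localization in both parts is sound, and the uniform Taylor expansion in part (b) goes through because there $\lambda$ is continuous, $(\lambda R)$ maps $\mathbf{C}_{c}(E)$ into itself, and part (a) supplies $\|P_{s}f-f\|_{\infty}\to0$ so that the integral term is $t\,(\lambda R)f+o(t)$ uniformly on the fixed compact.

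One step is too quick. In part (a) you claim that $\Lambda(t,x,v)\to0$ uniformly on compacts as $t\downarrow0$ ``by integrability of $\lambda$ along trajectories and compactness''. Trajectory-wise integrability only gives pointwise vanishing of $\Lambda(t,\cdot,\cdot)$; uniformity over a compact set (which you need, since the jump contribution is only controlled by $\|f\|_{\infty}\bigl(1-e^{-\Lambda(t,\cdot,\cdot)}\bigr)$) requires additional regularity of $\lambda$, e.g.\ local boundedness or continuity, and in dimension $d\geq2$ one can construct measurable, trajectory-integrable intensities for which the supremum of $\Lambda(t,\cdot,v)$ over a compact does not vanish as $t\downarrow0$. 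Part (a) of the statement, as reformulated here, assumes only $\mathbf{\left\langle A2\right\rangle }$, so your proof of it is silently using a standing regularity hypothesis on the characteristics; that hypothesis is present in the cited source and is satisfied in every application made in this paper (the Zig-Zag intensities are $\mathbf{C}^{1}$), but you should state it explicitly rather than attribute the uniform estimate to integrability alone.
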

\begin{lem*}[{\cite[Lemma 17]{2018arXiv180705421D}}]
Let $\{P_{t},t\geq0\}$ be a non-explosive PDMP semigroup of characteristics
$(\lambda,R)$ satisfying $\mathbf{\left\langle A3\right\rangle }$.
Then for $i\in\{0,1\}$, $t\geq0$ $P_{t}\mathbf{C}^{i}(E)\subset\mathbf{C}^{i}(E)$.
\end{lem*}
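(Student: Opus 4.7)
The plan is to represent $P_{t}$ via a Dyson--Phillips-type decomposition over the number of jumps. For $n\geq0$, let $S_{t}^{n}$ denote the sub-Markov operator encoding paths with exactly $n$ jumps in $[0,t]$, so that by non-explosion $P_{t}f=\sum_{n\geq0}S_{t}^{n}f$ with weights $\mathbb{P}_{(x,v)}(N(t)=n)$ summing to one. The zero-jump term has the explicit form
\[
S_{t}^{0}f(x,v)=f(x+tv,v)\exp\bigl(-\Lambda(t,x,v)\bigr),
\]
which visibly inherits membership in $\mathbf{C}^{i}(E)$ from $f$ and from $\lambda\in\mathbf{C}^{1}(E)$ (part of $\mathbf{\left\langle A3\right\rangle}$.2). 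The higher-order terms satisfy the Volterra recursion
\[
S_{t}^{n+1}f(x,v)=\int_{0}^{t}S_{t-s}^{0}\bigl[(\lambda R)S_{s}^{n}f\bigr](x,v)\,\mathrm{d}s,
\]
which is the engine of an induction on $n$.

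First I would show by induction that $S_{t}^{n}f\in\mathbf{C}^{i}(E)$ for every $n\geq0$. For $i=0$ this reduces to observing that $(\lambda R)\mathbf{C}(E)\subset\mathbf{C}(E)$, a consequence of $\mathbf{\left\langle A3\right\rangle}$.2 combined with dominated convergence, together with continuity in $(x,v)$ of the deterministic flow and of $\Lambda(t,\cdot,\cdot)$. For $i=1$, differentiate the recursion under the integral sign with respect to $x$: the derivative of $S_{t-s}^{0}h(x,v)$ splits into a contribution from $\nabla_{x}h(x+(t-s)v,v)\exp(-\Lambda)$ and a contribution involving $h\cdot\nabla_{x}\Lambda\cdot\exp(-\Lambda)$, both smooth in $(x,v)$, and the crucial pointwise bound of $\mathbf{\left\langle A3\right\rangle}$.2 controls $\nabla_{x}(\lambda RS_{s}^{n}f)$ by $\Psi$ times $\sup_{\mathrm{supp}R(x,v;\cdot)}\bigl(|S_{s}^{n}f|+\|\nabla_{x}S_{s}^{n}f\|\bigr)$, closing the inductive step.

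Next I would upgrade pointwise to uniform convergence on compact sets. Fix a compact $K\subset E$ and a horizon $T\geq t$; by $\mathbf{\left\langle A3\right\rangle}$.1 there exists an enlarged compact $\tilde{K}\subset E$ containing every iterated jump location and every flow image for trajectories that start in $K$, use at most $n$ jumps, and elapse at most time $T$, uniformly in $n$. Local boundedness of $\lambda$ and $\Psi$ on $\tilde{K}$ then yields, by induction along the Volterra recursion, an estimate of the form
\[
\sup_{(x,v)\in K}\bigl(|S_{t}^{n}f(x,v)|+\|\nabla_{x}S_{t}^{n}f(x,v)\|\bigr)\leq\frac{(M_{\tilde{K}}\,t)^{n}}{n!}\,\bigl(\|f\|_{\infty,\tilde{K}}+\|\nabla_{x}f\|_{\infty,\tilde{K}}\bigr),
\]
with $M_{\tilde{K}}$ finite. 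This is summable in $n$, so $\sum_{n}S_{t}^{n}f$ and $\sum_{n}\nabla_{x}S_{t}^{n}f$ converge uniformly on $K$; since each partial sum is continuous (resp. $\mathbf{C}^{1}$), the limit $P_{t}f$ lies in $\mathbf{C}^{i}(E)$.

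The main obstacle is keeping the compact set $\tilde{K}$ fixed as $n\to\infty$: without this, the successive supports of $R$ could in principle escape any preassigned compact and no uniform control of $\Psi$ or $\lambda$ would be available to close the Volterra bound. That is precisely what the compact-compatibility clause $\mathbf{\left\langle A3\right\rangle}$.1 buys. A secondary subtlety is that $\mathbf{\left\langle A3\right\rangle}$.2 bounds $\nabla_{x}(\lambda Rh)$ rather than $\nabla_{x}(Rh)$ alone, so the factor $\exp(-\Lambda)$ appearing in $S_{t-s}^{0}$ must be absorbed into the induction constant $M_{\tilde{K}}$ rather than treated separately; this is natural because $\lambda$ enters linearly in the generator and is $\mathbf{C}^{1}$ on $\tilde{K}$.
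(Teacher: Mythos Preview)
The paper does not actually prove this lemma: it is restated from \cite{2018arXiv180705421D} in Appendix~A purely for the reader's convenience, and the paper's own contribution there is only to verify that the hypotheses $\mathbf{\langle A2\rangle}$ and $\mathbf{\langle A3\rangle}$ hold for the Zig--Zag process. There is therefore no in-paper proof to compare your attempt against.

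That said, your outline is the standard argument and is essentially the one given in the cited reference: expand $P_t$ over the number of jumps via the Dyson--Phillips/Volterra series, prove $S_t^n f\in\mathbf{C}^i(E)$ by induction on $n$ using $\mathbf{\langle A3\rangle}$(b), and sum using the $(M_{\tilde K}t)^n/n!$ bound made possible by the compact-compatibility clause $\mathbf{\langle A3\rangle}$(a), which furnishes a single compact $\tilde K$ containing all iterated jump supports uniformly in $n$. Your identification of the two delicate points---uniformity of $\tilde K$ in $n$, and the fact that the gradient estimate is stated for $\lambda R$ rather than for $R$ alone---is accurate. One step to make explicit: the claim $(\lambda R)\mathbf{C}(E)\subset\mathbf{C}(E)$ is not literally part of $\mathbf{\langle A3\rangle}$(b), which addresses $\mathbf{C}^1$ only; you recover it by approximating $f\in\mathbf{C}(E)$ uniformly on compacta by $\mathbf{C}^1$ functions and using $\mathbf{\langle A3\rangle}$(a) to confine ${\rm supp}\,R(x,v;\cdot)$ to a fixed compact, so that the approximation passes through $\lambda R$.
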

We now establish that these results hold for the ZZ process considered
here.

Checking $\mathbf{\left\langle A2\right\rangle }$ is identical to
the argument \cite[Proposition 23]{2018arXiv180705421D} concerned
with the scenario where the velocity is bounded, since for any $t\in\mathbb{R}_{+}$
and $(x,v)\in E$, $P_{t}(x,v;B(x,\sqrt{d}t)\times\mathsf{V})=1$.
We turn to checking the conditions of \cite[Proposition 15]{2018arXiv180705421D}.
From (A\ref{hyp:ZZ-smooth-intensities}) $\lambda\in\mathbf{C}^{1}(E)$
and $\lambda>0$ and for any $f\in\mathbf{C}_{c}(E)$ then $Rf(x,v)\in\mathbf{C}_{c}(E)$
since for $i\in\left\llbracket 1,d\right\rrbracket $, $R_{i}f\in\mathbf{C}_{c}(E)$.
We conclude.

Establishing that $\mathbf{\left\langle A3\right\rangle }$-(a) holds
is identical to the result in the proof of \cite[Proposition 23]{2018arXiv180705421D}
concerned with the linear flow and bounded velocities. We repeat their
argument adapted to the present scenario: for all $M\in\mathbb{R}_{+}$
let $B(0,M)\subset\mathsf{X}$ then for all $(x,v)\in B(0,M)\times\mathsf{V}$,
$R(x,v;B(0,M)\times\mathsf{V})=1$. Therefore, for any $K\subset B(0,M_{K})\times\mathsf{V}$
(with $M_{K}$ such that the projection of $K$ on $\mathsf{X}$ is
contained in $B(0,M_{K})$) and $T\in\mathbb{R}_{+}$ $\mathbf{\left\langle A3\right\rangle }$-(a)
is satisfied with $\tilde{K}=B(0,M_{K}+\sqrt{d}T)\times\mathsf{V}$
and $K_{i}:=B(0,M_{K}+\sqrt{d}\sum_{j=0}^{i}t_{j})\times\mathsf{V}$
$i\in\left\llbracket 1,n\right\rrbracket $. As a result the process
is non-explosive. Finally we check that $\mathbf{\left\langle A3\right\rangle }$-(b)
holds. For $f\in\mathbf{C}^{1}(E)$
\begin{align*}
\|\nabla_{x}\big(\lambda Rf\big)(x,v) & \|\leq\sum_{i=1}^{d+1}\vert R_{i}f(x,v)\vert\|\nabla_{x}\lambda_{i}(x,v)\|+\lambda_{i}(x,v)\|\nabla_{x}\big(R_{i}f\big)(x,v)\|\\
 & \leq\left(\lambda(x,v)\vee\sum_{i=1}^{d+1}\|\nabla_{x}\lambda_{i}(x,v)\|\right)\sup\left\{ \vert f(y,w)\vert+\|\nabla_{x}f(y,w)\|,(y,w)\in{\rm supp}\{R(x,v;\cdot)\}\right\} .
\end{align*}
From (A\ref{hyp:ZZ-smooth-intensities}) $(x,v)\mapsto\lambda(x,v)\vee\sum_{i=1}^{d+1}\|\nabla_{x}\lambda_{i}(x,v)\|$
is locally bounded. We conclude that \cite[Lemma 17]{2018arXiv180705421D}
holds.

\section{Expression for Dirichlet forms}

\noindent  Computation of $\mathcal{E}\big(f,LQ\big)$ requires
computation of terms of the form $\bigl\langle f,\lambda\cdot[{\rm Id}-R]Qf\bigr\rangle_{\mu}$.
The identity
\[
\bigl\langle f,\lambda\cdot[{\rm Id}-R]Qf\bigr\rangle_{\mu}=\bigl\langle f,\lambda\cdot[{\rm Id}-RQ]f\bigr\rangle_{\mu}-\bigl\langle f,\lambda\cdot[{\rm Id}-Q]f\bigr\rangle_{\mu}
\]
motivates the following result.
\begin{lem}
\label{lem:ZZ-polarization}Assume that the operator $R$ is such
that for any $x\in\mathsf{X}$, $\lambda_{x}\cdot R_{x}$ is $(\varpi,Q)-$symmetric
on $\mathcal{D}\big(R_{x}\big)$. Then for any $f\in L^{2}(\mu)$
such that for any $x\in\mathsf{X}$ $f_{x}\in\mathcal{D}\big(R_{x}\big)$
and the integral exists, we have
\[
\bigl\langle f,\lambda\cdot[{\rm Id}-RQ]f\bigr\rangle_{\mu}=\frac{1}{2}\int\big(f(x,v)-f(x,w)\big)^{2}\lambda(x,v)\mu\big({\rm d}(x,v)\big)R_{x}Q\big(v,{\rm d}w\big).
\]
\end{lem}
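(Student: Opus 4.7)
The plan is to reduce the identity to a pointwise polarization in the velocity variable with $x\in\mathsf{X}$ held fixed, using the product structure $\mu\big({\rm d}(x,v)\big)\propto \exp(-U(x)){\rm d}x\,\varpi({\rm d}v)$ and applying Fubini at the end. The only real content is a self-adjointness upgrade of the auxiliary operator $\lambda_{x}\cdot R_{x}Q$.

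First I would upgrade the hypothesis: from the $(\varpi,Q)-$symmetry of $\lambda_{x}\cdot R_{x}$, together with $Q^{2}={\rm Id}$ and the $\varpi-$self-adjointness of $Q$ (Remark~\ref{rem:propertiesQ}), I would deduce that $\lambda_{x}\cdot R_{x}Q$ is $\varpi-$self-adjoint on $\mathcal{D}(R_{x})$. Concretely, for admissible $f_{x},g_{x}$,
\[
\bigl\langle(\lambda_{x}R_{x}Q)f_{x},g_{x}\bigr\rangle_{\varpi}=\bigl\langle(\lambda_{x}R_{x})(Qf_{x}),g_{x}\bigr\rangle_{\varpi}=\bigl\langle Qf_{x},Q(\lambda_{x}R_{x})Qg_{x}\bigr\rangle_{\varpi}=\bigl\langle f_{x},(\lambda_{x}R_{x})Qg_{x}\bigr\rangle_{\varpi}.
\]
Equivalently, the measure $A_{x}({\rm d}v,{\rm d}w):=\varpi({\rm d}v)\lambda_{x}(v)(R_{x}Q)(v,{\rm d}w)$ on $\mathsf{V}\times\mathsf{V}$ is invariant under the swap $(v,w)\leftrightarrow(w,v)$.

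Second, I would execute the standard Dirichlet-form polarization in the velocity variable. Since $R_{x}$ is Markov, $(R_{x}Q)(v,\mathsf{V})=1$, so the diagonal term rewrites as $\int f_{x}(v)^{2}\lambda_{x}(v)\varpi({\rm d}v)=\int f_{x}(v)^{2}A_{x}({\rm d}v,{\rm d}w)$, and the $(v,w)$-symmetry of $A_{x}$ lets me average with the corresponding integral of $f_{x}(w)^{2}$. Combining with the cross term $\int f_{x}(v)f_{x}(w)A_{x}({\rm d}v,{\rm d}w)$ yields
\[
\bigl\langle f_{x},\lambda_{x}({\rm Id}-R_{x}Q)f_{x}\bigr\rangle_{\varpi}=\tfrac{1}{2}\int\big[f_{x}(v)-f_{x}(w)\big]^{2}A_{x}({\rm d}v,{\rm d}w).
\]

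Third, I would integrate this pointwise identity in $x$ against the $\mathsf{X}-$marginal of $\mu$, noting that $\lambda(x,v)=\lambda_{x}(v)$ and $f(x,\cdot)=f_{x}$; Fubini (justified by the integrability hypothesis in the statement) recovers the displayed formula. No step presents a substantial obstacle: the main conceptual move is the first one, which converts $(\varpi,Q)-$symmetry of $\lambda_{x}\cdot R_{x}$ into plain $\varpi-$self-adjointness of $\lambda_{x}\cdot R_{x}Q$; once that is in place, the remainder is the textbook polarization identity for a symmetric positive kernel together with the Markov normalization of $R_{x}Q$.
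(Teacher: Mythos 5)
Your proof is correct and follows essentially the same route as the paper's: the key step in both is to convert the $(\varpi,Q)$-symmetry of $\lambda\cdot R$ into plain symmetry of $\lambda\cdot RQ$ (the paper does this via the identical chain $\langle\lambda\cdot RQf,g\rangle=\langle Qf,Q(\lambda\cdot R)Qg\rangle=\langle f,\lambda\cdot RQg\rangle$, just written globally in $\mu$ rather than pointwise in $x$), followed by the standard polarization identity together with the Markov normalization $RQ\mathbf{1}=\mathbf{1}$. Disintegrating over $x$ first and invoking Fubini at the end is a cosmetic rearrangement, not a different argument.
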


\begin{cor}
Note that $R_{x}={\rm Id}$ is $(\varpi,Q)-$symmetric and therefore
\[
\bigl\langle f,\lambda\cdot[{\rm Id}-Q]f\bigr\rangle_{\mu}=\frac{1}{2}\int\big(f(x,v)-Qf(x,v)\big)^{2}\lambda(x,v)\mu\big({\rm d}(x,v)\big).
\]
\end{cor}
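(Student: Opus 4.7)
The plan is to reduce the claim, pointwise in $x\in\mathsf{X}$, to the classical ``polarization'' identity for reversible Markov kernels, the only subtlety being to correctly track the role of $Q$. Throughout I will use that $\mu\big({\rm d}(x,v)\big)$ disintegrates as $\pi({\rm d}x)\varpi({\rm d}v)$ (as fixed in (\ref{eq:def-joint-mu-distribution})), and that for every $x\in\mathsf{X}$ the operator $R_xQ$ is a Markov kernel on $\mathsf{V}$ (since $R_x$ is Markov by construction and $Q$ corresponds to the involution $v\mapsto -v$).

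First I would translate the $(\varpi,Q)$-symmetry of $\lambda_x\cdot R_x$ into $\varpi$-self-adjointness of $\lambda_x\cdot R_xQ$. This is exactly the argument in the proof of Proposition \ref{prop:QPsa}: that proof only uses linearity, $Q^2={\rm Id}$ and the isometry identity $\langle Qa,Qb\rangle_{\varpi}=\langle a,b\rangle_{\varpi}$, so it applies to the (non-Markov) bounded operator $\lambda_x\cdot R_x$ as well. Concretely, for $h,k\in\mathcal{D}(R_x)$,
\[
\bigl\langle(\lambda_x\cdot R_x)Qh,k\bigr\rangle_{\varpi}=\bigl\langle Qh,Q(\lambda_x\cdot R_x)Qk\bigr\rangle_{\varpi}=\bigl\langle h,(\lambda_x\cdot R_x)Qk\bigr\rangle_{\varpi}.
\]

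Second I would interpret this $\varpi$-self-adjointness as detailed balance for the Markov kernel $R_xQ$ with respect to the (unnormalised) measure $\lambda(x,\cdot)\varpi({\rm d}\cdot)$. That is, the measure
\[
\eta_x({\rm d}v,{\rm d}w):=\lambda(x,v)\,\varpi({\rm d}v)\,R_xQ(v,{\rm d}w)
\]
on $\mathsf{V}\times\mathsf{V}$ is symmetric under the swap $(v,w)\leftrightarrow(w,v)$; this is immediate from the previous display by testing against indicator functions.

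Third, for fixed $x$ I would apply the standard reversible-kernel polarization. Since $R_xQ(v,\mathsf{V})=1$, we have
\[
\int\lambda(x,v)f(x,v)^{2}\varpi({\rm d}v)=\int f(x,v)^{2}\eta_x({\rm d}v,{\rm d}w)=\int f(x,w)^{2}\eta_x({\rm d}v,{\rm d}w),
\]
where the second equality uses the symmetry of $\eta_x$. Combined with
\[
\int\lambda(x,v)f(x,v)\,R_xQf(x,v)\,\varpi({\rm d}v)=\int f(x,v)f(x,w)\eta_x({\rm d}v,{\rm d}w),
\]
averaging the two expressions for $\int\lambda(x,v)f(x,v)^{2}\varpi({\rm d}v)$ and expanding the square gives
\[
\bigl\langle f_x,\lambda_x\cdot[{\rm Id}-R_xQ]f_x\bigr\rangle_{\varpi}=\tfrac{1}{2}\int\big(f(x,v)-f(x,w)\big)^{2}\lambda(x,v)\varpi({\rm d}v)R_xQ(v,{\rm d}w).
\]

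Finally I would integrate against $\pi({\rm d}x)$ and invoke Fubini (justified by the integrability hypothesis on $f$) to obtain the claimed identity. The only step that requires care is the first one, namely identifying the correct ``detailed balance'' reading of the hypothesis; once that is in place, the remainder is the textbook symmetrisation argument behind (\ref{eq:dirichlet-form}).
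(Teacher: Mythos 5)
Your argument is correct and is essentially the paper's own: the corollary is obtained by specialising Lemma \ref{lem:ZZ-polarization} to $R_{x}={\rm Id}$, and the proof of that lemma consists of exactly your two steps, namely deducing $\varpi$-self-adjointness of $\lambda_{x}\cdot R_{x}Q$ from the $(\varpi,Q)$-symmetry hypothesis and then polarising. Recasting that self-adjointness as symmetry of the measure $\eta_{x}$ is only a cosmetic repackaging of the paper's inner-product manipulation; the one thing to add is the explicit final specialisation $R_{x}Q(v,{\rm d}w)=\delta_{-v}({\rm d}w)$, which turns $f(x,w)$ into $Qf(x,v)$ and yields the stated identity.
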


\begin{proof}
By assumption $\bigl\langle\lambda\cdot RQf,g\bigr\rangle_{\mu}=\bigl\langle Qf,Q(\lambda\cdot R)Qg\bigr\rangle_{\mu}=\bigl\langle f,\lambda\cdot RQg\bigr\rangle_{\mu}$,
that is $\lambda\cdot RQ$ is symmetric. Now we use polarization
\begin{align*}
\bigl\langle f,\lambda\cdot[{\rm Id}-RQ]f\bigr\rangle_{\mu} & =\frac{1}{2}\int\left[2f^{2}(x,v)+\big(f(x,v)-f(x,w)\big)^{2}-f^{2}(x,v)-f^{2}(x,w)\right]\lambda(x,v)\mu\big({\rm d}(x,v)\big)R_{x}Q\big(v,{\rm d}w\big)\\
 & =\frac{1}{2}\left\{ \int\big(f(x,v)-f(x,w)\big)^{2}\lambda(x,v)\mu\big({\rm d}(x,v)\big)R_{x}Q\big(v,{\rm d}w\big)\right.\\
 & \hspace{5cm}\left.-\int f^{2}(x,v)\big[\lambda(x,v)-\lambda(x,v)\big]\mu\big({\rm d}(x,v)\big)\right\} ,
\end{align*}
where, with $(x,v)\mapsto\mathbf{1}(x,v)=1$ we have used $\bigl\langle\mathbf{1},\lambda\cdot RQf^{2}\bigr\rangle_{\mu}=\bigl\langle\lambda,f^{2}\bigr\rangle_{\mu}$
since $\lambda\cdot RQ$ is symmetric.
\end{proof}

\section{Continuity of $\epsilon\protect\mapsto{\rm var}_{\lambda}(f,L_{\epsilon})$
for }
\begin{lem}
\label{lem:cv-regularized-asympvar}For any $\epsilon\geq0$ let $\left\{ P_{t}^{\epsilon},t\geq0\right\} $
be a semigroup on $L^{2}(\mu)$ leaving $\mu$ invariant, or generator
$\big(L_{\epsilon},\mathcal{D}^{2}(L_{\epsilon},\mu)\big)$ and assume
that for any $t\geq0$ and $f\in L^{2}(\mu)$
\[
\lim_{\epsilon\downarrow0}\|P_{t}^{\epsilon}f-P_{t}^{0}f\|_{\mu}=0.
\]
Then for any $f\in L^{2}(\mu)$ and $\lambda>0$,
\[
\lim_{\epsilon\downarrow0}{\rm var}_{\lambda}(f,L_{\epsilon})={\rm var}_{\lambda}(f,L_{0}).
\]
\end{lem}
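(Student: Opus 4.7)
\medskip

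\noindent \textbf{Proof plan.} The strategy is a routine dominated convergence argument after unpacking the definition of ${\rm var}_{\lambda}$. I would first recall that by construction
\[
{\rm var}_{\lambda}(f,L_{\epsilon})=2\bigl\langle f,R_{\lambda}^{\epsilon}f\bigr\rangle_{\mu},\qquad R_{\lambda}^{\epsilon}f:=\int_{0}^{\infty}e^{-\lambda t}P_{t}^{\epsilon}f\,{\rm d}t,
\]
and rewrite this using the continuity of the inner product applied to a Bochner integral (or Fubini, after noting the integrand is absolutely Bochner-integrable in $L^{2}(\mu)$ for $\lambda>0$):
\[
\bigl\langle f,R_{\lambda}^{\epsilon}f\bigr\rangle_{\mu}=\int_{0}^{\infty}e^{-\lambda t}\bigl\langle f,P_{t}^{\epsilon}f\bigr\rangle_{\mu}\,{\rm d}t.
\]
The target is then to pass to the limit $\epsilon\downarrow0$ under the $t$-integral on the right-hand side.

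\medskip

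\noindent The pointwise (in $t$) convergence is immediate from the hypothesis and Cauchy--Schwarz:
\[
\bigl|\bigl\langle f,(P_{t}^{\epsilon}-P_{t}^{0})f\bigr\rangle_{\mu}\bigr|\leq\|f\|_{\mu}\,\|P_{t}^{\epsilon}f-P_{t}^{0}f\|_{\mu}\xrightarrow[\epsilon\downarrow0]{}0.
\]
For a dominating function, I would use that each $P_{t}^{\epsilon}$ leaves $\mu$ invariant and is Markov, so Jensen's inequality applied with the sub-Markov (in fact Markov) kernel gives $(P_{t}^{\epsilon}f)^{2}\leq P_{t}^{\epsilon}(f^{2})$ pointwise, hence $\|P_{t}^{\epsilon}f\|_{\mu}\leq\|f\|_{\mu}$. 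Therefore
\[
\bigl|e^{-\lambda t}\bigl\langle f,P_{t}^{\epsilon}f\bigr\rangle_{\mu}\bigr|\leq e^{-\lambda t}\|f\|_{\mu}^{2},
\]
which is $t$-integrable on $[0,\infty)$ whenever $\lambda>0$, uniformly in $\epsilon$.

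\medskip

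\noindent With pointwise convergence and a uniform integrable majorant in hand, the dominated convergence theorem yields
\[
\lim_{\epsilon\downarrow0}\int_{0}^{\infty}e^{-\lambda t}\bigl\langle f,P_{t}^{\epsilon}f\bigr\rangle_{\mu}\,{\rm d}t=\int_{0}^{\infty}e^{-\lambda t}\bigl\langle f,P_{t}^{0}f\bigr\rangle_{\mu}\,{\rm d}t,
\]
and hence $\langle f,R_{\lambda}^{\epsilon}f\rangle_{\mu}\to\langle f,R_{\lambda}^{0}f\rangle_{\mu}$, which after multiplication by $2$ is exactly the desired conclusion ${\rm var}_{\lambda}(f,L_{\epsilon})\to{\rm var}_{\lambda}(f,L_{0})$.

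\medskip

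\noindent There is essentially no hard step here; the only mild care is in justifying the interchange of $\langle f,\cdot\rangle_{\mu}$ with $\int_{0}^{\infty}e^{-\lambda t}(\cdot)\,{\rm d}t$, which I would do either by invoking the defining property of the Bochner integral (continuous linear functionals commute with it, and $R_{\lambda}^{\epsilon}f$ is by assumption the Bochner integral of $t\mapsto e^{-\lambda t}P_{t}^{\epsilon}f$ in $L^{2}(\mu)$) or, equivalently, by a direct Fubini argument on $E\times\mathbb{R}_{+}$ using that $(t,z)\mapsto e^{-\lambda t}f(z)P_{t}^{\epsilon}f(z)$ is absolutely integrable with respect to ${\rm d}t\otimes\mu({\rm d}z)$ by Cauchy--Schwarz and the contraction bound above.
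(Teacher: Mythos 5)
Your argument is correct and is essentially the paper's own proof: both reduce the claim to a dominated convergence argument in $t$ for the Laplace-transform representation of ${\rm var}_{\lambda}$, using Cauchy--Schwarz together with the $L^{2}(\mu)$-contraction property of a $\mu$-invariant Markov semigroup to produce the integrable majorant $e^{-\lambda t}\|f\|_{\mu}^{2}$ (the paper equivalently dominates $e^{-\lambda t}\|P_{t}^{\epsilon}f-P_{t}^{0}f\|_{\mu}$ by $2e^{-\lambda t}\|f\|_{\mu}$). The extra care you take in justifying the interchange of $\langle f,\cdot\rangle_{\mu}$ with the Bochner integral is a welcome refinement but does not change the route.
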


\begin{proof}
For $\lambda>0$ and $f\in L^{2}(\mu)$ we have
\begin{align*}
\vert{\rm var}_{\lambda}(f,L_{\epsilon})-{\rm var}_{\lambda}(f,L_{0})\vert & \leq\int\int\exp(-\lambda t)\vert f(x,v)\vert\vert P_{t}^{\epsilon}f(x,v)-P_{t}^{0}f(x,v)\vert\mu\big({\rm d}(x,v)\big){\rm d}t,\\
 & \leq\|f\|_{\mu}\int\exp(-\lambda t)\|P_{t}^{\epsilon}f-P_{t}^{0}f\|_{\mu}{\rm d}t,
\end{align*}
from the Cauchy-Schwarz inequality. Since for $t\geq0$ $\|P_{t}^{\epsilon}f-P_{t}^{0}f\|_{\mu}\leq2\|f\|_{\mu}$
we can apply the dominated convergence theorem and conclude.
\end{proof}
\begin{thm}
\label{thm:cv-regularized-semigroup-mu-invariant}Let $d=1$. For
any $\epsilon>0$, let $\left\{ P_{t}^{\epsilon},t\geq0\right\} $
be a ZZ process of intensity As in Proposition \ref{prop:example-regularized-intensities}.
Then, with $\left\{ P_{t},t\geq0\right\} $ the semigroup of the ZZ
process using canonical intensities,
\begin{enumerate}
\item for any $f\in\mathbf{B}(E)$, $(x,v)\in E$ any $t\geq0$ and $\epsilon>0$
such that $1-\big[\exp(\epsilon)-1\big]^{1/2}>0$, 
\[
\vert P_{t}f(x,v)-P_{t}^{\epsilon}f(x,v)\vert\leq-2\log\left\{ 1-\big[\exp(\epsilon)-1\big]^{1/2}\right\} t\|f\|_{\infty},
\]
\item $\{P_{t},t\geq0\}$ is Feller and $\mathbf{C}_{c}^{1}(E)$ is a core
for the corresponding strong generator,
\item $\mu$ is invariant for $\{P_{t},t\geq0\}$,
\item $\{P_{t},t\geq0\}$ can be extended to a strongly continuous semigroup
on $L^{2}(\mu)$ equipped with $\|\cdot\|_{\mu}$.
\item For any $f\in L_{2}(\mu)$ and $t\geq0$ we have
\[
\lim_{\epsilon\downarrow0}\|P_{t}f-P_{t}^{\epsilon}f\|_{\mu}=0.
\]
\end{enumerate}
\end{thm}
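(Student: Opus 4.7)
The plan is to prove Part 1 via a coupling of the two PDMPs and then leverage it, combined with Theorem \ref{thm:ZZ-core} applied to the regularised processes, to transfer Parts 2--5 to the canonical limit.

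For Part 1, Proposition \ref{prop:bound-approx-penalty-intensity} provides the uniform estimate $\delta_{\epsilon}:=\|\lambda^{\epsilon}-\lambda^{0}\|_{\infty}\leq-\log(1-c_{\epsilon})$ with $c_{\epsilon}:=[\exp(\epsilon)-1]^{1/2}$, valid whenever $c_{\epsilon}<1$. Starting from a common initial state $(x,v)$, I would realise both PDMPs on the same probability space via Poisson thinning: between events they share the deterministic flow $\varphi_{s}(x,v)=(x+sv,v)$, and at each event of the dominating rate $\bar{\lambda}:=\lambda^{\epsilon}\vee\lambda^{0}$ an independent uniform $U\in[0,\bar{\lambda}(X_{s},V_{s})]$ is drawn so that the $\epsilon$-process bounces iff $U\leq\lambda^{\epsilon}$ and the canonical process bounces iff $U\leq\lambda^{0}$, while the refreshment channel is shared and leaves the coupling intact. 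Let $\tau$ be the first ``single-process'' event. Until $\tau$ the two trajectories coincide, and the rate of such events at time $s<\tau$ equals $|\lambda^{\epsilon}-\lambda^{0}|(X_{s},V_{s})\leq\delta_{\epsilon}$, yielding $\mathbb{P}_{(x,v)}(\tau\leq t)\leq 1-\exp(-\delta_{\epsilon}t)\leq\delta_{\epsilon}t$. Bounding the integrand by $2\|f\|_{\infty}$ on $\{\tau\leq t\}$ and by $0$ on $\{\tau>t\}$ gives $|P_{t}f(x,v)-P_{t}^{\epsilon}f(x,v)|\leq 2\|f\|_{\infty}\delta_{\epsilon}t$, which is Part 1.

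Part 1 implies $P_{t}^{\epsilon}f\to P_{t}f$ uniformly as $\epsilon\downarrow 0$ for every $f\in\mathbf{B}(E)$ and $t\geq 0$. Since each $P_{t}^{\epsilon}$ maps $\mathbf{C}_{0}(E)$ into itself by Theorem \ref{thm:ZZ-core}, the uniform limit $P_{t}$ does too; contractivity is inherited pointwise; strong continuity at $0$ follows from $\|P_{t}f-f\|_{\infty}\leq\|P_{t}f-P_{t}^{\epsilon}f\|_{\infty}+\|P_{t}^{\epsilon}f-f\|_{\infty}$ by choosing $\epsilon$ small first and then $t\downarrow 0$. For the core claim, for $f\in\mathbf{C}_{c}^{1}(E)$ one has $L^{\epsilon}f-Lf=(\lambda^{\epsilon}-\lambda^{0})[Rf-f]$ with $\|L^{\epsilon}f-Lf\|_{\infty}\leq 2\delta_{\epsilon}\|f\|_{\infty}$; a three-term triangle inequality analogous to the one above then yields $f\in\mathcal{D}_{\infty}(L_{\infty})$ with $L_{\infty}f=Lf$, and a standard Trotter--Kato approximation argument (see \cite[Chapter 1, Theorem 6.5]{ethier2009markov}) identifies the closure of $L|_{\mathbf{C}_{c}^{1}(E)}$ with $L_{\infty}$, so $\mathbf{C}_{c}^{1}(E)$ is a core. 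Invariance of $\mu$ is immediate from bounded convergence, $\int P_{t}f\,{\rm d}\mu=\lim_{\epsilon}\int P_{t}^{\epsilon}f\,{\rm d}\mu=\int f\,{\rm d}\mu$; for Part 4, invariance with Jensen's inequality gives $\|P_{t}f\|_{\mu}\leq\|f\|_{\mu}$ on the dense subspace $\mathbf{C}_{0}(E)\cap L^{2}(\mu)$, so $P_{t}$ extends uniquely to an $L^{2}(\mu)$-contraction with strong $L^{2}$-continuity inherited by density.

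For Part 5, let $f\in L^{2}(\mu)$ and $f_{M}:=f\,\mathbb{I}\{|f|\leq M\}$, so that $\|f-f_{M}\|_{\mu}\to 0$ as $M\uparrow\infty$ by dominated convergence. Using $L^{2}$-contractivity of both $P_{t}$ and $P_{t}^{\epsilon}$,
\begin{equation*}
\|P_{t}f-P_{t}^{\epsilon}f\|_{\mu}\leq 2\|f-f_{M}\|_{\mu}+\|P_{t}f_{M}-P_{t}^{\epsilon}f_{M}\|_{\mu},
\end{equation*}
while Part 1 yields $\|P_{t}f_{M}-P_{t}^{\epsilon}f_{M}\|_{\infty}\leq-2M\log(1-c_{\epsilon})t$, which dominates the $L^{2}(\mu)$-norm since $\mu$ is a probability. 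Letting $\epsilon\downarrow 0$ first and then $M\uparrow\infty$ concludes. The main technical obstacle is the rigorous formalisation of the Poisson-thinning coupling in Part 1, in particular verifying that the single coupled Poisson random measure generates non-explosive trajectories for both marginals (inherited from (A\ref{hyp:properties-lambda})-\ref{enu:pdmp-intensity-non-explosion} and the uniform control $\delta_{\epsilon}<\infty$); a subsidiary delicacy is that the core identification must be carried out in $(\mathbf{C}_{0}(E),\|\cdot\|_{\infty})$ using the Feller structure of each $\{P_{t}^{\epsilon},t\geq 0\}$ rather than directly in $L^{2}(\mu)$, because the non-smoothness of $\lambda^{0}$ precludes a direct appeal to Theorem \ref{thm:ZZ-core} for the canonical process.
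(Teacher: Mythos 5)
Your proposal is correct in substance but follows a genuinely different, more self-contained route for the first three claims. Where the paper simply observes that $R_{1}^{\epsilon}=R_{1}$ and $\|\lambda-\lambda^{\epsilon}\|_{\infty}\leq-\log\{1-[\exp(\epsilon)-1]^{1/2}\}$ and then invokes \cite[Proposition 11, Theorem 21, Corollary 22]{2018arXiv180705421D} wholesale to obtain claims 1--3, you reconstruct claim 1 directly via a synchronous Poisson-thinning coupling (which is essentially the content of the cited Proposition 11), deduce claim 3 from claim 1 by bounded convergence, and obtain claim 2 by passing the Feller property through the uniform limit and identifying the core via a Trotter--Kato argument. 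The coupling bound is right: since $\lambda^{\epsilon}\geq\lambda^{0}$ the decoupling rate is $\lambda^{\epsilon}-\lambda^{0}\leq\delta_{\epsilon}$, giving $\mathbb{P}(\tau\leq t)\leq\delta_{\epsilon}t$ and hence the stated estimate. The one step you leave at a high level is the core identification for the canonical generator: since $\lambda^{0}$ is not $\mathbf{C}^{1}$ you cannot use the stability $P_{t}\mathbf{C}_{c}^{1}(E)\subset\mathbf{C}_{c}^{1}(E)$ route of Theorem \ref{thm:ZZ-core}, so you must argue that ${\rm Ran}\big((\lambda{\rm Id}-L)_{\mid\mathbf{C}_{c}^{1}(E)}\big)$ is dense in $\mathbf{C}_{0}(E)$; this does close, using that $\mathbf{C}_{c}^{1}(E)$ is a core for each $L^{\epsilon}$, that $\|(L^{\epsilon}-L)f\|_{\infty}\leq2\delta_{\epsilon}\|f\|_{\infty}$, and dissipativity of $L$, but it deserves to be spelled out. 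Your claims 4 and 5 coincide with the paper's argument up to cosmetic differences (you truncate $f$ at level $M$ where the paper approximates by $\tilde{f}\in\mathbf{C}_{0}(E)$; both exploit the $L^{2}(\mu)$-contractivity of the two semigroups together with claim 1). The trade-off is clear: the paper's proof is short but outsources the analytic content to an external reference, whereas yours is longer but makes the perturbation mechanism explicit and quantitative.
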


\begin{proof}
For any $\epsilon>0$ (A\ref{hyp:regularized-intensities}) implies
(A\ref{hyp:ZZ-smooth-intensities}) from which we deduce that the
conclusions of Theorem \ref{thm:ZZ-core} hold for $\left\{ P_{t}^{\epsilon},t\in\mathbb{R}_{+}\right\} $.
Further from Proposition \ref{prop:bound-approx-penalty-intensity}
and its corollary, for any $\epsilon>0$ such that $1-\big[\exp(\epsilon)-1\big]^{1/2}>0$
we have $R_{1}^{\epsilon}=R_{1}$ and $\|\lambda-\lambda^{\epsilon}\|_{\infty}\leq-\log\left\{ 1-\big[\exp(\epsilon)-1\big]^{1/2}\right\} $.
Consequently we can apply \cite[Proposition 11, Theorem 21, Corollary 22]{2018arXiv180705421D}
and deduce the first three claims. The fourth claim is direct and
obtained by density of $\mathbf{C}_{0}(E)$ in $L^{2}(\mu)$. For
the fifth claim note that for any $t\geq0$, $\epsilon>0$, $f\in L^{2}(\mu)$
and $\tilde{f}\in\mathbf{C}_{0}(E)$ we have
\begin{align*}
\|P_{t}f-P_{t}^{\epsilon}f\|_{\mu} & \leq\|P_{t}f-P_{t}\tilde{f}\|_{\mu}+\|P_{t}\tilde{f}-P_{t}^{\epsilon}\tilde{f}\|_{\mu}+\|P_{t}^{\epsilon}f-P_{t}^{\epsilon}\tilde{f}\|_{\mu}\\
 & \leq2\|f-\tilde{f}\|_{\mu}-2\log\left\{ 1-\big[\exp(\epsilon)-1\big]^{1/2}\right\} t\|\tilde{f}\|_{\infty},
\end{align*}
where we have used the contraction property of $P_{t}$ and $P_{t}^{\epsilon}$,
the first claim and the corollary of Proposition \ref{prop:bound-approx-penalty-intensity}.
Now for any $\varepsilon>0$, by density of $\mathbf{C}_{0}(E)$ in
$L^{2}(\mu)$ we can find $\tilde{f}\in\mathbf{C}_{0}(E)$ such that
$2\|f-\tilde{f}\|_{\mu}\leq\varepsilon/2$ and $\epsilon_{0}>0$ such
that for any $0\leq\epsilon\leq\epsilon_{0}$, $-2\log\left\{ 1-\big[\exp(\epsilon)-1\big]^{1/2}\right\} t\|\tilde{f}\|_{\infty}\leq\varepsilon/2$.
We conclude.
\end{proof}

\section{Regularized intensities}
\begin{prop}
\label{prop:bound-approx-penalty-intensity}With the notation of Proposition
\ref{prop:example-regularized-intensities} we have the alternative
expression for $r\geq0$ and $\epsilon\geq0$,
\[
\phi_{\epsilon}(r)=\int\min\big\{1,r\exp(-\epsilon/2+\epsilon^{1/2}z)\big\}\mathcal{N}(z;0,1){\rm d}z\;,
\]
$\phi_{\epsilon}(r)<1$ for $\epsilon>0$ and
\[
0\leq\phi_{0}(r)-\phi_{\epsilon}(r)\leq\phi_{0}(r)\big[\exp(\epsilon)-1\big]^{1/2}.
\]
\end{prop}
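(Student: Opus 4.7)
The plan is to recast $\phi_\epsilon$ probabilistically. Set $W_\epsilon := \exp(-\epsilon/2 + \epsilon^{1/2}Z)$ with $Z \sim \mathcal{N}(0,1)$, so that the stated integral equals $\mathbb{E}[\min\{1, rW_\epsilon\}]$, and by the Gaussian moment generating function $\mathbb{E}[W_\epsilon] = 1$, $\mathbb{E}[W_\epsilon^2] = \exp(\epsilon)$, hence $\mathrm{Var}(W_\epsilon) = \exp(\epsilon) - 1$. To match the closed form involving $\Phi$, I would split the integral on the level set $\{z \colon rW_\epsilon(z) \leq 1\}$ and apply the Cameron--Martin identity $\exp(-\epsilon/2 + \epsilon^{1/2}z)\varphi(z) = \varphi(z - \epsilon^{1/2})$ (with $\varphi$ the standard Gaussian density) on the part where the minimum equals $rW_\epsilon$; the complementary region contributes a plain Gaussian tail probability, reproducing the two $\Phi$-terms.

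The strict inequality $\phi_\epsilon(r) < 1$ is then immediate: for $\epsilon > 0$, $W_\epsilon$ has a strictly positive density on $(0,\infty)$, so $\mathbb{P}(rW_\epsilon < 1) > 0$ and $\mathbb{E}[\min\{1, rW_\epsilon\}] < 1$. The lower bound $\phi_\epsilon(r) \leq \phi_0(r)$ is Jensen's inequality applied to the concave map $x \mapsto \min\{1, x\}$, combined with $\mathbb{E}[W_\epsilon] = 1$: $\phi_\epsilon(r) \leq \min\{1, r\mathbb{E}[W_\epsilon]\} = \min\{1, r\} = \phi_0(r)$, so $\phi_0(r) - \phi_\epsilon(r) \geq 0$.

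For the upper bound, the main step is the self-duality $\phi_\epsilon(r) = r\phi_\epsilon(1/r)$, which I would obtain by the change of variable $z \mapsto \epsilon^{1/2} - z$ in the integral form combined once more with the Cameron--Martin identity, or equivalently by observing $\min\{r, W_\epsilon\} = r\min\{1, W_\epsilon/r\}$ and noting that the measure change swaps the roles of $r$ and $1/r$. With this symmetry in hand, it suffices to prove the bound for $r \leq 1$, where $\phi_0(r) = r$ and, using $\mathbb{E}[W_\epsilon] = 1$,
\[
\phi_0(r) - \phi_\epsilon(r) = \mathbb{E}[rW_\epsilon - \min\{1, rW_\epsilon\}] = \mathbb{E}[(rW_\epsilon - 1)_+] = r\,\mathbb{E}[(W_\epsilon - 1/r)_+].
\]
Since $c \mapsto \mathbb{E}[(W_\epsilon - c)_+]$ is non-increasing and $1/r \geq 1$, this is at most $r\,\mathbb{E}[(W_\epsilon - 1)_+]$, and Cauchy--Schwarz yields $\mathbb{E}[(W_\epsilon - 1)_+] \leq \sqrt{\mathbb{E}[(W_\epsilon - 1)^2]} = \sqrt{\exp(\epsilon) - 1}$, hence $\phi_0(r) - \phi_\epsilon(r) \leq r\sqrt{\exp(\epsilon) - 1} = \phi_0(r)\sqrt{\exp(\epsilon) - 1}$. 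For $r > 1$, the symmetry gives $\phi_0(r) - \phi_\epsilon(r) = r[\phi_0(1/r) - \phi_\epsilon(1/r)] \leq r \cdot (1/r)\sqrt{\exp(\epsilon) - 1} = \phi_0(r)\sqrt{\exp(\epsilon) - 1}$. The one delicate point is precisely this reduction via self-duality: the naive $L^1$ estimate $r\mathbb{E}[(W_\epsilon - 1)_+]$ is only comparable to $\phi_0(r)$ when $r \leq 1$, so without the identity $\phi_\epsilon(r) = r\phi_\epsilon(1/r)$ one cannot close the bound uniformly in $r$.
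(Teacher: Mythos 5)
Your proof is correct. The derivation of the integral representation (splitting on the level set and completing the square / Cameron--Martin) and the lower bound via Jensen's inequality with $\mathbb{E}[W_\epsilon]=1$ match the paper's argument. Where you diverge is the upper bound: the paper avoids any case split on $r$ by using the submultiplicativity $\min\{1,ab\}\geq\min\{1,a\}\min\{1,b\}$ for $a,b>0$, which factors $\phi_0(r)=\min\{1,r\}$ out of the integrand in one line, leaving $\mathbb{E}\big[1-\min\{1,W_\epsilon\}\big]\leq\mathbb{E}\vert 1-W_\epsilon\vert\leq\big[\mathrm{Var}(W_\epsilon)\big]^{1/2}=\big[\exp(\epsilon)-1\big]^{1/2}$ uniformly in $r$. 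You instead invoke the self-duality $\phi_\epsilon(r)=r\phi_\epsilon(1/r)$ (which does hold -- it is immediate from the closed form, and is in any case required for $\phi_\epsilon$ to satisfy the hypotheses of the regularization construction), reduce to $r\leq1$, and use the exact identity $\phi_0(r)-\phi_\epsilon(r)=\mathbb{E}\big[(rW_\epsilon-1)_+\big]$ together with monotonicity in the strike and Cauchy--Schwarz. Both routes are valid and of comparable length; yours makes the probabilistic structure (a call-price identity under a mean-one change of measure) more visible, while the paper's is marginally more economical. Your closing remark that the bound ``cannot be closed uniformly in $r$'' without the self-duality overstates the case: the paper's submultiplicativity trick does exactly that, so the duality is a convenience in your argument rather than a necessity.
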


\begin{proof}
First claim. This is direct for $r=0$. For $\epsilon,r>0$ , with
$A_{r}:=\big\{ z\in\mathbb{R}\colon\log r-\epsilon/2+\epsilon^{1/2}z\geq0\big\}$
\begin{align*}
\phi_{\epsilon}(r): & =1-\Phi\big(\epsilon^{1/2}/2-\epsilon^{-1/2}\log r\big)+r\int_{A_{r}^{c}}\exp\big(-\epsilon/2+\epsilon^{1/2}z\big)\mathcal{N}(z;0,1){\rm d}z.
\end{align*}
Noting that $\epsilon^{1/2}z-z^{2}/2=-(z-\epsilon^{1/2})^{2}/2+\epsilon/2,$
and together with $Z\sim\mathcal{N}(0,1)$, we have
\begin{align*}
(2\pi)^{-1/2}\int_{A_{r}^{c}}\exp\Bigl(-\epsilon/2+\epsilon^{1/2}z-\frac{1}{2}z^{2}\Bigr){\rm d}z & =\mathbb{P}\left(Z+\epsilon^{1/2}<\epsilon^{1/2}/2-\epsilon^{-1/2}\log r\right)\\
 & =\mathbb{P}\left(Z<-\epsilon^{1/2}/2-\epsilon^{-1/2}\log r\right)\\
 & =1-\Phi\big(\epsilon^{1/2}/2+\epsilon^{-1/2}\log r\big),
\end{align*}
and we conclude. Second claim. The leftmost inequality follows from
Jensen's inequality (twice). Since for $a,b>0$, $\min\big\{1,ab\big\}\geq\min\big\{1,a\big\}\min\big\{1,b\big\}$
and using the expressions for the mean and variance of the log-normal
distribution,
\begin{align*}
\phi_{0}(r)-\phi_{\epsilon}(r) & =\int\big[\min\big\{1,r\big\}-\min\big\{1,r\exp\big(-\epsilon/2+\epsilon^{1/2}z\big)\big\}\big]\mathcal{N}(z;0,1){\rm d}z\\
 & \leq\min\big\{1,r\big\}\int\big[1-\min\big\{1,\exp\big(-\epsilon/2+\epsilon^{1/2}z\big)\big\}\big]\mathcal{N}(z;0,1){\rm d}z\\
 & =\phi_{0}(r)\int\max\big\{0,1-\exp\big(-\epsilon/2+\epsilon^{1/2}z\big)\big\}\mathcal{N}(z;0,1){\rm d}z\\
 & \leq\phi_{0}(r)\int\vert1-\exp\big(-\epsilon/2+\epsilon^{1/2}z\big)\vert\mathcal{N}(z;0,1){\rm d}z\\
 & \leq\phi_{0}(r)\big[\exp(\epsilon)-1\big]^{1/2}.
\end{align*}
\end{proof}
\begin{cor}
As a result we have for $r>0$
\[
0\leq1-\frac{\phi_{\epsilon}(r)}{\phi_{0}(r)}\leq\big[\exp(\epsilon)-1\big]^{1/2}
\]
and consequently for $\epsilon>0$ such that $1-\big[\exp(\epsilon)-1\big]^{1/2}>0$
and $\lambda^{\epsilon}$ as in Proposition \ref{prop:example-regularized-intensities},
for any $(x,v)\in E$,
\begin{align*}
0 & \leq\lambda^{\epsilon}(x,v)-\lambda^{0}(x,v)=\log\left\{ \frac{\phi_{0}\big[\exp(\partial U(x)v)\big]}{\phi_{\epsilon}\big[\exp(\partial U(x)v)\big]}\right\} \\
 & \leq-\log\left\{ 1-\big[\exp(\epsilon)-1\big]^{1/2}\right\} .
\end{align*}
\end{cor}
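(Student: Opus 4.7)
The plan is to derive the corollary as a direct algebraic manipulation of the bound established in the preceding Proposition \ref{prop:bound-approx-penalty-intensity}. First I would note that for $r > 0$, the quantity $\phi_0(r) = \min\{1,r\}$ is strictly positive, so dividing the chain of inequalities $0 \leq \phi_0(r) - \phi_\epsilon(r) \leq \phi_0(r)[\exp(\epsilon) - 1]^{1/2}$ through by $\phi_0(r)$ yields the first claim
\[
0 \leq 1 - \frac{\phi_\epsilon(r)}{\phi_0(r)} \leq [\exp(\epsilon) - 1]^{1/2}
\]
immediately, with no further calculation needed.

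For the consequence, I would set $r := \exp(\partial U(x) v)$ and simply unpack the definition of $\lambda^\epsilon$ from Proposition \ref{prop:example-regularized-intensities}, which gives
\[
\lambda^\epsilon(x,v) - \lambda^0(x,v) = -\log \phi_\epsilon(r) + \log \phi_0(r) = \log\!\left\{\frac{\phi_0(r)}{\phi_\epsilon(r)}\right\}.
\]
The lower bound (nonnegativity of this difference) is immediate from $\phi_\epsilon(r) \leq \phi_0(r)$, which was the left half of the first claim. For the upper bound, I would rearrange the right half of the first claim to $\phi_\epsilon(r)/\phi_0(r) \geq 1 - [\exp(\epsilon) - 1]^{1/2}$; under the standing assumption that $1 - [\exp(\epsilon) - 1]^{1/2} > 0$ both sides are strictly positive, so inverting and taking logarithms preserves the inequality and delivers the stated bound $-\log\{1 - [\exp(\epsilon)-1]^{1/2}\}$.

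I do not anticipate any obstacle: the content of the corollary is a purely formal rewriting of Proposition \ref{prop:bound-approx-penalty-intensity}, converting a multiplicative bound on $\phi_0 - \phi_\epsilon$ into an additive bound on $\lambda^\epsilon - \lambda^0 = \log(\phi_0/\phi_\epsilon)$. The only subtleties worth a brief sentence in the actual write-up are (i) the positivity of $\phi_0(r)$ for $r > 0$ needed to divide, and (ii) the role of the hypothesis $1 - [\exp(\epsilon) - 1]^{1/2} > 0$ in ensuring that $\log\{1 - [\exp(\epsilon)-1]^{1/2}\}$ is well defined.
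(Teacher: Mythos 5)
Your proposal is correct and follows exactly the route the paper intends: the paper states this corollary without proof, treating it as an immediate consequence of Proposition \ref{prop:bound-approx-penalty-intensity}, and your division by $\phi_{0}(r)>0$ followed by taking logarithms (using the hypothesis $1-[\exp(\epsilon)-1]^{1/2}>0$ to keep the logarithm well defined) is precisely that argument.
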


\bibliographystyle{plain}
\bibliography{mdb}

\end{document}